\newtheorem{theorem}{Theorem}[section]
\newtheorem{lemma}{Lemma}[section]
\newtheorem{remark}{Remark}[section]
\newtheorem{definition}{Definition}[section]
 \newcommand{\<}{\left\langle}
\renewcommand{\>}{\right\rangle}
\newcommand{\eps}{\varepsilon}
\newcommand{\To}{\longrightarrow}
\newcommand{\be} {\begin{equation}}
\newcommand{\ee} {\end{equation}}
\newcommand{\bea} {\begin{eqnarray}}
\newcommand{\eea} {\end{eqnarray}}
\newcommand{\Bea} {\begin{eqnarray*}}
\newcommand{\Eea} {\end{eqnarray*}}
\newcommand{\pa} {\partial}
\newcommand{\al} {\alpha}
\newcommand{\ba} {\beta}
\newcommand{\de} {\delta}
\newcommand{\na}{\nabla}
\newcommand{\ga} {\gamma}
\newcommand{\Om} {\Omega}
\newcommand{\om} {\omega}
\newcommand{\De} {\Delta}
\newcommand{\la} {\lambda}
\newcommand{\nequiv} {\not\equiv}
\newcommand{\no} {\nonumber}
\newcommand{\noi} {\noindent}
\newcommand{\lab} {\label}
\newcommand{\va} {\varphi}
\newcommand{\R}{\mathbb R}
\newcommand{\N}{\mathbb N}
\newcommand{\Rn}{\mathbb R^N}
\newcommand{\Iom}{\int_{\Omega}}
\newcommand{\deb}{\rightharpoonup}
\makeatletter \@addtoreset{equation}{section} \makeatother
\begin{document}

 \title[Multiplicity results for $(p,\, q)$ fractional Laplace equations]{Multiplicity results for $(p,\, q)$ fractional elliptic equations involving critical nonlinearities}

\author{ Mousomi Bhakta, \,\, Debangana Mukherjee}
\address{ Department of Mathematics, Indian Institute of Science Education and Research, Dr. Homi Bhabha Road,
Pune 411008, India}
\email{mousomi@iiserpune.ac.in, \,\, debangana18@gmail.com }

\subjclass[2010]{Primary 35R11, 35J20, 49J35,  secondary 47G20, 45G05}
\keywords{$(p,q)$ fractional, critical, concave-convex, concave-critical, multiplicity, positive solution, nonnegative solutions, infinitely many solutions, concentration-compactness result.}
\date{}

\begin{abstract} In this paper we prove the existence
of infinitely many nontrivial solutions for the class of $(p,\, q)$ fractional elliptic equations involving concave-critical nonlinearities in bounded domains in $\mathbb{R}^N$. Further, when the nonlinearity is of convex-critical type, we establish the multiplicity of nonnegative solutions using variational methods. In particular, we show the existence of at least $cat_{\Omega}(\Omega)$ nonnegative solutions.
\end{abstract}

\maketitle 

\tableofcontents

 \section{ Introduction}
In this article we discuss the existence of multiple nontrivial solutions of $(p,\, q)$ fractional Laplacian equations involving concave-critical type nonlinearities and existence of nonnegative solutions when nonlinearities is of convex-critical type. More precisely, first we consider equations of the type

\begin{equation*}
	(P_{\theta,\la})
	\left\{\begin{aligned}
		(-\De)^{s_1}_p u + (-\De)^{s_2}_q u &=\theta V(x)|u|^{r-2}u +|u|^{p^*_{s_1}-2}u+\la f(x,u),\quad\text{in }\quad \Om, \\
		u &= 0  \quad\text{in }\quad \Rn \setminus \Om,
	\end{aligned}
	\right.
\end{equation*}
where $\Om \subset \Rn$ is a smooth, bounded domain, $\la,\,\theta>0,\, 0<s_2< s_1<1,\, 1<r<q<p<\frac{N}{s_1}$ and $p_s^*=\frac{Np}{N-sp}$ for any $s\in(0,1)$. The functions $f$ and $V$ satisfy certain assumptions, which have been made precise later. Up to a normalisation factor, the non-local Operator $(-\Delta)^s_a$ ($a\geq 1$) , is defined as follows:
\begin{align} \label{frac s_a}
	(-\Delta)^s_a u(x)=\lim_{\eps\to 0}\int_{\mathbb{R}^N\setminus B_\eps(x)}\frac{|u(y)-u(x)|^{a-2}(u(y)-u(x))}{|x-y|^{N+as}}dy,\,\,\,x\in\mathbb{R}^N. 
\end{align}

For $s_1=s_2=1$, the equation in $(P_{\theta, \lambda})$ reduces to the $(p,\, q)$ Laplacian problem which appears in more general reaction-diffusion system 
\be\lab{7-1-1}u_t=\text{div}\big(a(u)\na u\big)+g(x,u),\ee
where $a(u)=|\na u|^{p-2}\na u+|\na u|^{q-2}\na u$. This system has a wide range of applications in Physics which include biophysics, plasma physics and chemical reaction-diffusion system, etc. In such applications, the function $u$ describes a concentration, the first term on the right-hand side of \eqref{7-1-1} corresponds to the diffusion with a diffusion coefficient $a(u)$ and the second
one is the reaction and relates to sources and loss processes. Typically, in chemical and biological applications, the reaction
term $g(x,u)$ has a polynomial form with respect to the concentration $u$. Consequently, quasilinear elliptic boundary value problems involving this operator have been widely studied in the literature (see e.g., \cite{BMV, MP1, MP} and the references there-in). In particular, proving the existence and  multiplicity of nontrivial solutions and nonnegative solutions were of major interest in many articles, see \cite{CMP, LZ, YinYang1, YinYang2} and the references there-in.

We also observe that the functional associate to the operator in $(P_{\theta, \lambda})$ is originally connected to Homogenization theory \cite{Zhi-2}. For example in the local case $s_1=s_2=1$, the functional associated to the operator in $(P_{\theta, \lambda})$ falls in the realm of general model functional
$$\mathcal{P}(w,\Om):= \displaystyle\int_{\Om} \big(|\na w|^p+a(x)|\na w|^q\big)dx,$$ where $a(.)\geq 0$, was extensively studied in \cite{GM-2, GM-1, GM-3}. This kind of functional was first introduced by Zhikov \cite{Zhi-1, Zhi-2} in order to produce models for strongly anisotropic materials. They intervene in Homogenisation theory and Elasticity, where the coefficient $a(.)$ for instance dictates the geometry of a composite made by two different materials. 

 When $p=q$ and $s_1=s_2$, $(P_{\theta,\la})$ reduces to $p$-fractional type equations with concave-convex nonlinearities. In recent years, existence and multiplicity result for nontrivial, positive and sign-changing solutions for the $p$-fractional type equations with concave-convex nonlinearities have gained considerable interest. In this regard we cite some of the related recent works \cite{BM, CD, CS, GS} (also see the references there-in).

In the nonlocal case $s\in(0,1)$ and $p=q=2$, equations with two nonlocal operators have also started gaining interest in the past few years starting with the work of Chen, Kim, Song, et al, see \cite{CKS}, \cite{CKSV}. Very recently in \cite{A} and \cite{CB}, authors studied some existence and multiplicity results for $(p,q)$ fractional Laplacian type equations in $\Rn$. But as of our knowledge, there is no article so far where the nonlinear analysis involving $(p, q)$ fractional Laplacian operator or combination of two linear fractional Laplacian operator have been carried out in bounded domain in $\Rn$ in the spirit of above mentioned works. The aforementioned results are motivation for this present paper, where we study the existence and multiplicity results for the equations involving $(p,q)$ fractional Laplacian operator with concave-critical or convex-critical nonlinearities. 
In this regards we also like to mention that very recently, in \cite{FLP} existence of non-negative solutions for system of equations involving fractional $(p,q)$ Laplace operator have been studied. 

\vspace{3mm}

 For $ p\geq 1$ and $s\in(0,1)$, we denote the standard fractional Sobolev space by $W^{s,p}(\Omega)$ endowed with the norm
$$
\|{u}\|_{W^{s,p}(\Om)}:=\|{u}\|_{L^p(\Om)}+\left(\int_{\Om\times\Om} \frac{|u(x)-u(y)|^p}{|x-y|^{N+sp}}dxdy\right)^{1/p}.
$$
We set $Q:=\R^{2N}\setminus (\Om^c \times \Om^c)$, where $\Om^c=\Rn \setminus \Om$ and define $$
X_{s,p}(\Om):=\Big\{u:\mathbb{R}^N\to\mathbb{R}\mbox{ measurable }\Big|u|_{\Omega}\in L^p(\Omega)\mbox{ and }
\int_{Q} \frac{|u(x)-u(y)|^p}{|x-y|^{N+sp}}dxdy<\infty\Big\}.
$$
The space $X_{s,p}(\Om)$ is endowed with the norm defined as
$$\|u\|_{s,p}:=|u|_p+\left(\int_{Q} \frac{|u(x)-u(y)|^p}{|x-y|^{N+sp}}dxdy\right)^{1/p},$$
where $|u|_p=\|u\|_{L^p(\Om)}$. Note that in general $W^{s,p}(\Om)$ is not same as $X_{s,p}(\Om)$ as $\Om\times\Om$ is strictly contained in $Q$.

Next, we define $X_{0,s,p}(\Om) :=\Big\{u \in X_{s,p} : u=0 \quad\text{a.e. in}\quad \Rn \setminus \Om\Big\} $ or equivalently 
as $\overline{C_0^\infty(\Om)}^{X_{s,p}(\Om)}$. It is well-known that for $p>1$,  $X_{0,s,p}(\Om)$ is a uniformly convex Banach space endowed with the norm   
$$\|u\|_{0,s,p}=\left(\int_{Q} \frac{|u(x)-u(y)|^p}{|x-y|^{N+sp}}dxdy\right)^{1/p}.$$	
	
Since $u=0$ in $\Rn\setminus\Om,$ the above integral can be extended to all of $\mathbb{R}^N.$ The embedding
 $X_{0,s,p}(\Om)\hookrightarrow L^r(\Om)$ is continuous for any $r\in[1,p^*_s]$ and compact for $r\in[1,p^*_s).$  Moreover, for $1<q\leq p$, $X_{0,s_1,p}(\Om)\subset X_{0,s_2,q}(\Om)$ (see Lemma \ref{Lemma4} in Section 2). For further details on $X_{0,s,p}(\Om)$ and it's properties we refer \cite{NePaVal}. 
 
 \vspace{2mm}
 	
Throughout this article we assume the functions $V(\cdot),\, f(\cdot,\cdot)$ satisfy the following:
\begin{itemize}
	\item [\bf(A1)]
	$V \in L^{\infty}(\Om)$ and there exists $\sigma>0,\,\eta>0$ such that $V(x)>\sigma>0$ for all $x \in \Omega$ and 
	$$\Iom V(x)|u|^r\,dx \leq \eta \|u\|_{0,s_2,r}^r$$ for all $u \in X_{0,s_2,r}(\Om)$.
	\item [\bf(A2)]
	$|f(x,t)| \leq a_1|t|^{\al-1}+a_2|t|^{\ba-1}$ for all $x \in \Om, \, t \in \R, \,a_1,\, a_2>0,\,1<\al,\,\ba<p^*_{s_1}.$
	\item [\bf(A3)]
	There exists $a_3>0$ and $l \in (1,p)$ such that 
	$$f(x,t)t-p^*_{s_1}F(x,t) \geq -a_3|t|^{l}$$ for all $x \in \Om, t \in \R$ where
	$F(x,t)=\int_0^t f(x,\tau)d\tau.$ 
	\item [\bf(A4)]
	$f(x,t)>0$ for all $x \in \Om,t \in \R^+$ and
	$f(x,t)=-f(x,-t)$ for all $x \in \Omega, t \in \R.$
\end{itemize}

 \begin{definition}\label{sol} We say that $u\in X_{0,s_1,p}(\Om)$ is a weak solution of $(P_{\theta,\la})$ if for all $\phi \in X_{0,s_1,p}(\Om)$, we have
 	\Bea
 		&&\int_{\R^{2N}}\frac{|u(x)-u(y)|^{p-2}(u(x)-u(y))(\phi(x)-\phi(y))}{|x-y|^{N+ps_1}}dxdy \\
 		&&+ \int_{\R^{2N}}\frac{|u(x)-u(y)|^{q-2}(u(x)-u(y))(\phi(x)-\phi(y))}{|x-y|^{N+qs_2}}dxdy \\
 		&&= \theta\Iom  V(x) |u(x)|^{r-2}u(x)\phi(x)dx
 		+ \Iom |u(x)|^{p^*_{s_1}-2}u(x)\phi(x)dx+ \la\Iom f(x,u)\phi dx.
 	\Eea
 \end{definition}

\noi Our first main result is the following:
\begin{theorem}\label{Thm}
Let $0<s_2< s_1<1$,  $1<r<q<p<\frac{N}{s_1}$ and assumptions (A1)-(A4) being satisfied. Then there exists $\la^*>0$ such that for any $\la \in (0,\la^*),$ there exists $\theta^*>0$ such that for any $\theta \in (0,\theta^*),$ problem $(P_{\theta,\la})$ has infinitely many  nontrivial weak solutions in $X_{0,s_1,p}(\Om).$
\end{theorem}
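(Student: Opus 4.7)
The plan is to apply a symmetric minimax principle of Clark/genus type to the energy functional
\begin{align*}
J_{\theta,\la}(u)&=\frac{1}{p}\|u\|_{0,s_1,p}^{p}+\frac{1}{q}\|u\|_{0,s_2,q}^{q}-\frac{\theta}{r}\Iom V(x)|u|^{r}\,dx\\
&\quad-\frac{1}{p^*_{s_1}}\Iom|u|^{p^*_{s_1}}\,dx-\la\Iom F(x,u)\,dx
\end{align*}
defined on $X_{0,s_1,p}(\Om)$. The embeddings recalled after Definition~\ref{sol} make $J_{\theta,\la}\in C^{1}(X_{0,s_1,p}(\Om),\R)$, and (A4) together with the evenness of each power nonlinearity renders $J_{\theta,\la}$ even. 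Since $1<r<q<p$, near the origin the concave term $-\frac{\theta}{r}\int V|u|^r$ dominates the two $p$- and $q$-Gagliardo seminorms, so $J_{\theta,\la}<0$ in many directions near $0$; however, the critical term forces $\inf J_{\theta,\la}=-\infty$. I would reconcile this by truncating outside a small ball $\{\|u\|_{0,s_1,p}\le\rho_0\}$ to produce an even auxiliary functional $\widetilde J_{\theta,\la}$ that agrees with $J_{\theta,\la}$ on that ball and is bounded below on $X_{0,s_1,p}(\Om)$.

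The first substantial step is a local Palais--Smale condition for $\widetilde J_{\theta,\la}$. A (PS)$_c$-sequence $(u_n)$ is first shown to be bounded, using (A3) to absorb the subcritical $\la F$-contribution against the $p$-growth of $\|u_n\|_{0,s_1,p}^p$. From a weak limit one then extracts a strongly convergent subsequence via a Brezis--Lieb decomposition of the critical term combined with a concentration-compactness principle adapted to the fractional $(p,q)$-setting, ruling out mass concentration as long as
$$c<c^{*}(\theta,\la):=\frac{s_1}{N}\,S^{\,N/(s_1p)}-o_{\theta,\la}(1),$$
where $S$ denotes the best fractional Sobolev constant of $(-\De)^{s_1}_p$ on $\Om$. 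Choosing $\la^*$ small and then $\theta^{*}=\theta^{*}(\la)$ small guarantees $c^{*}(\theta,\la)>0$, so (PS)$_c$ holds in the whole relevant range $c<0$.

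The second step is the $\mathbb Z_2$-geometry near the origin. For each $k\in\N$ I pick a $k$-dimensional subspace $E_k\subset C_{c}^{\infty}(\Om)\subset X_{0,s_1,p}(\Om)$. On $E_k$ all norms are equivalent, and (A1) gives $\Iom V|u|^{r}\ge \sigma |u|_{r}^{r}\ge \mu_k\|u\|_{0,s_1,p}^{r}$ for some $\mu_k>0$. Using that $F\ge 0$ (by oddness and positivity in (A4)), that the critical term contributes non-positively, and that $r<q<p$, one obtains for $\rho_k>0$ small enough
$$\sup_{u\in E_k,\,\|u\|_{0,s_1,p}=\rho_k}\widetilde J_{\theta,\la}(u)\;\le\;C_{1}\rho_k^{p}+C_{2}\rho_k^{q}-\theta\sigma\mu_k\rho_k^{r}\;<\;0,$$
furnishing, for every $k$, a symmetric closed subset of Krasnoselskii genus at least $k$ on which $\widetilde J_{\theta,\la}$ stays strictly negative.

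Combining the two ingredients, Clark's theorem delivers an increasing sequence of critical values $c_k\nearrow 0^{-}$ of $\widetilde J_{\theta,\la}$, hence infinitely many distinct critical points $u_k$. Since $c_k<0$, each $u_k$ necessarily lies in $\{\|u\|_{0,s_1,p}\le\rho_0\}$, where $\widetilde J_{\theta,\la}=J_{\theta,\la}$, so $(u_k)$ constitutes an infinite family of nontrivial weak solutions of $(P_{\theta,\la})$ in the sense of Definition~\ref{sol}. The main technical hurdle is the Palais--Smale analysis below $c^{*}$: one has to handle concentration of $(u_n)$ against the critical exponent $p^{*}_{s_1}$ driven by the non-homogeneous $(-\De)^{s_1}_p$-norm while simultaneously tracking the lower-order $(-\De)^{s_2}_q$-contribution, which forces the use of a concentration-compactness lemma tailored to the fractional $(p,q)$ framework (of the type alluded to in the abstract).
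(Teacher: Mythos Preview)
Your proposal is essentially correct and follows the same strategy as the paper: truncate the even energy functional so that it is bounded below and coincides with $J_{\theta,\la}$ on the sublevel set where the negative genus geometry lives, establish a local (PS)$_c$ condition below a threshold of the form $\frac{s_1}{N}S_{s_1,p}^{N/(s_1p)}-c_1\theta^{q/(q-r)}-c_2\la^{p^*_{s_1}/(p^*_{s_1}-l)}$, build for each $k$ a $k$-dimensional subspace on which a small sphere has negative energy (using $r<q<p$ and (A1), (A4)), and conclude via genus/Clark-type minimax.

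Two minor deviations are worth flagging. First, the paper's truncation is not simply a cut-off outside a single ball $\{\|u\|_{0,s_1,p}\le\rho_0\}$ but is built from the full root structure of the scalar function $h(x)=c_3x^p-c_4x^{p^*_{s_1}}-c_5\theta x^r-c_6\la x^\alpha-c_7\la x^\beta$; this is what guarantees both that $I_\infty$ is bounded below \emph{and} that $I_\infty(u)<0$ forces $\|u\|_{0,s_1,p}$ into the region where $I_\infty=I$. Your simpler ``small ball'' description works in spirit but you should check that the modified functional is $C^1$, even, and that its negative sublevel sets genuinely sit where no modification has occurred. Second, the paper's (PS) analysis in Lemma~3.1 proceeds directly via Brezis--Lieb on the $p$- and $q$-seminorms and on $|u_n|_{p^*_{s_1}}^{p^*_{s_1}}$, without invoking the full concentration-compactness lemma you allude to; that lemma is proved in the paper but is used only for Theorem~\ref{thm2}. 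Your route would also work, but it is heavier than what is needed here.
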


Our next goal is to study the nonnegative solutions to $(P_{\theta,\la})$.  For  $V(x)\equiv1$ and $\lambda=0,$ we investigate the nonnegative solutions of  $(P_{\theta,\la})$ and prove the following results:

\begin{theorem}\label{thm1}
Let $0<s_2<s_1<1$ and $2\leq q<p<r<p^*_{s_1}.$ Then there exists $\theta^*>0$ such that for any $\theta>\theta^*,$ the  problem
  \begin{equation}
 (P)\label{P}
 \left\{\begin{aligned}
 (-\De)^{s_1}_p u + (-\De)^{s_2}_q u &=\theta|u|^{r-2}u +|u|^{p^*_{s_1}-2}u  \quad\text{in }\quad \Om, \\
 u &>0 \quad\text{in}\quad \Om, \\
 u&=0 \quad\text{in}\quad \Rn \setminus \Omega.
 \end{aligned}
 \right.
 \end{equation}	
 has a   nontrivial nonnegative weak solution. 
\end{theorem}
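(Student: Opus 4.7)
The plan is to apply the Mountain Pass Theorem to the truncated energy functional
\[
I_\theta(u) = \frac{1}{p}\|u\|^p_{0,s_1,p} + \frac{1}{q}\|u\|^q_{0,s_2,q} - \frac{\theta}{r}\int_\Omega (u^+)^r\,dx - \frac{1}{p^*_{s_1}}\int_\Omega (u^+)^{p^*_{s_1}}\,dx
\]
on $X_{0,s_1,p}(\Om)$, where $u^+=\max\{u,0\}$. Replacing $u$ by $u^+$ in the nonlinearities forces any nontrivial critical point to be nonnegative: testing the Euler--Lagrange equation against $\phi=u_\theta^-$ and using the pointwise monotonicity $|a-b|^{m-2}(a-b)(a^--b^-)\geq |a^--b^-|^m$ for both $m=p$ and $m=q$, together with the vanishing of the right-hand side on $\{u_\theta<0\}$, gives $u_\theta\geq 0$.

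Mountain pass geometry follows in a standard way. The continuous embedding $X_{0,s_1,p}(\Om)\hookrightarrow L^t(\Om)$ for $t\in[1,p^*_{s_1}]$, combined with discarding the nonnegative $q$-seminorm term, yields
\[
I_\theta(u)\geq \frac{1}{p}\|u\|_{0,s_1,p}^p - C_1\theta\|u\|_{0,s_1,p}^r - C_2\|u\|_{0,s_1,p}^{p^*_{s_1}},
\]
which is bounded below by some $\alpha>0$ on a sphere of sufficiently small radius since $p<r<p^*_{s_1}$. For any nonnegative $\phi_0\in X_{0,s_1,p}(\Om)\setminus\{0\}$ one has $I_\theta(t\phi_0)\to -\infty$ as $t\to\infty$ because $p^*_{s_1}$ exceeds $p$, $q$, and $r$. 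Hence $I_\theta$ has a well-defined mountain-pass level $c_\theta\geq \alpha>0$.

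The decisive analytical step is a local Palais--Smale condition: any $(PS)_{c}$ sequence at level $c<c^*:=\frac{s_1}{N}\,S_{s_1,p}^{N/(s_1p)}$ (with $S_{s_1,p}$ the best Sobolev constant for $(-\De)^{s_1}_p$) admits a strongly convergent subsequence. The proof should go through a Brezis--Lieb decomposition applied to both fractional seminorms combined with the concentration-compactness lemma for $(p,q)$-fractional functionals advertised in the abstract. To show $c_\theta<c^*$ for $\theta$ large, one estimates along the ray $t\phi_0$:
\[
c_\theta\leq \max_{t\geq 0}\Bigl(\frac{t^p}{p}A + \frac{t^q}{q}B - \frac{\theta t^r}{r}C\Bigr),
\]
with $A,B,C>0$ depending only on $\phi_0$. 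Since $q<p<r$, an elementary scaling argument shows the right-hand side is $O(\theta^{-p/(r-p)})$ and therefore vanishes as $\theta\to\infty$. Choosing $\theta^*$ so that this bound falls below $c^*$ whenever $\theta>\theta^*$, Mountain Pass then delivers a nontrivial critical point $u_\theta$, which is nonnegative by the truncation.

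The main obstacle is the compactness threshold itself. Because the two seminorms $\|\cdot\|_{0,s_1,p}$ and $\|\cdot\|_{0,s_2,q}$ scale differently under dilations, the standard single-operator Brezis--Nirenberg scheme does not apply verbatim; one has to verify that for a bubble-type concentrating profile the lower-order $q$-seminorm contributes a strictly lower-order quantity, so that the critical threshold is controlled solely by the dominant $(-\De)^{s_1}_p$-operator. Carrying this out cleanly inside the concentration-compactness framework, and matching the possible concentration atoms between the two measures induced by the two seminorms, is the delicate point on which the whole argument rests.
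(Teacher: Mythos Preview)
Your proposal is correct and follows essentially the same route as the paper: truncated functional with $(u^+)^{r-1}$ and $(u^+)^{p^*_{s_1}-1}$, mountain-pass geometry, local Palais--Smale below the threshold $\frac{s_1}{N}S_{s_1,p}^{N/(s_1p)}$, the estimate $c_\theta\to 0$ as $\theta\to\infty$ via a fixed test function, and nonnegativity by testing against $u^-$.

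Your final concern, however, overstates the difficulty. The paper does not invoke any $(p,q)$-concentration-compactness or matching of atoms between two measures; a plain Brezis--Lieb decomposition on both seminorms suffices. For $v_n=u_n-u$ one gets $\|v_n\|_{0,s_1,p}^p+\|v_n\|_{0,s_2,q}^q-|v_n|_{p^*_{s_1}}^{p^*_{s_1}}=o(1)$; denoting the three limits by $a,b,d$ one has $a+b=d$, hence $a\le d$, and the Sobolev inequality for the leading operator alone gives $a\ge S_{s_1,p}\,d^{p/p^*_{s_1}}\ge S_{s_1,p}\,a^{p/p^*_{s_1}}$, forcing $a=0$ or $a\ge S_{s_1,p}^{N/(s_1p)}$. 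The lower-order $q$-seminorm enters only through the nonnegative number $b$ (which, since $q<p^*_{s_1}$, contributes an extra positive term $\big(\tfrac{1}{q}-\tfrac{1}{p^*_{s_1}}\big)b$ to the energy) and is thereby absorbed automatically; the compactness threshold is governed solely by $(-\Delta)^{s_1}_p$ with no further work.
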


To state our next theorem, we need the following definition.

\begin{definition}
Let $M$ be a topological space and consider a closed subset $A\subset M$.
We say that $A$ has category $k$ relative to $M$ ($cat_M(A)=k$), if $A$
is covered by $k$ closed sets $A_j, \, 1\leq j\leq k$, which are contractible in $M$, and if $k$ is minimal with this property. If no such finite covering exists, we define
$cat_M(A)=\infty$. Moreover, we define $cat_M(\emptyset)=0$.
\end{definition}

Using Lusternik--Schnirelmann category theory, we prove our next result.

\begin{theorem}\label{thm2} 
Let $0<s_2<s_1<1$ and
$$N>p^2s_1,\quad 2\leq q<\frac{N(p-1)}{N-s_1}<p \leq \max \{p,\, p^*_{s_1}-\frac{q}{q-1}\}<r<p^*_{s_1}.$$
Then there exists $\theta_{**}>0$ such that for any $\theta \in (0,\theta_{**}),$ problem
	(P) has at least $\text{cat}_\Omega(\Omega)$ nontrivial nonnegative solutions in $X_{0,s_1,p}(\Om).$ 
\end{theorem}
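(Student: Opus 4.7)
The plan is to carry out a Benci--Cerami/Lusternik--Schnirelmann category argument on the Nehari manifold associated to the energy
\[
I_\theta(u) = \frac{1}{p}\|u\|_{0,s_1,p}^p + \frac{1}{q}\|u\|_{0,s_2,q}^q - \frac{\theta}{r}\Iom |u|^{r}\,dx - \frac{1}{p^*_{s_1}}\Iom |u|^{p^*_{s_1}}\,dx
\]
on $X_{0,s_1,p}(\Om)$. Non-negativity of the produced solutions is enforced in the usual way, by replacing the two nonlinearities in $(P)$ with their positive-part versions and testing the resulting equation against $u^-$ to get $u^-\equiv 0$. Under the hypotheses $r>p$ and $r>p^*_{s_1}-q/(q-1)$, a direct computation shows that the Nehari set $\mathcal{N}_\theta=\{u\ne 0:\langle I'_\theta(u),u\rangle=0\}$ is a $C^1$ natural constraint, with the fibering map $t\mapsto I_\theta(tu)$ admitting a unique positive critical point which is a strict maximum.

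The compactness ingredient I would next establish is the sharp threshold
\[
c_\infty := \Big(\frac{1}{p}-\frac{1}{p^*_{s_1}}\Big) S^{N/(ps_1)},
\]
where $S$ denotes the best Sobolev constant for $X_{0,s_1,p}(\Rn)\hookrightarrow L^{p^*_{s_1}}(\Rn)$, below which $I_\theta$ restricted to $\mathcal{N}_\theta$ satisfies $(PS)_c$. The argument is a refined concentration-compactness analysis mirroring what is used for Theorem \ref{thm1}: since $q<p$ and $s_2<s_1$, Lemma \ref{Lemma4} makes the $(s_2,q)$-seminorm a lower-order perturbation at the critical scale, so bubble-loss can only come from $(s_1,p)$-concentration, carrying at least $S^{N/(ps_1)}/N$ units of energy.

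To link the geometry to $\Om$, I would then build the two category maps. Using cut-off rescalings $U_{\eps,y}$ of the Aubin--Talenti extremal of $S$ centered at $y\in\Om$, define $\Phi_{\theta,\eps}(y):=t_\theta(y,\eps)\,U_{\eps,y}$, where $t_\theta(y,\eps)>0$ is the unique Nehari projection; this gives a continuous map $\Phi_{\theta,\eps}:\Om\to\mathcal{N}_\theta$. A careful $\eps\to 0$ energy expansion should yield $\limsup_\eps\sup_y I_\theta(\Phi_{\theta,\eps}(y))<c_\infty$ for $\theta$ sufficiently small: the leading term is exactly $(1/p-1/p^*_{s_1})S^{N/(ps_1)}$, the subcritical $\theta$-term produces a strictly negative correction (the balance here uses $p<r<p^*_{s_1}$), and the hypotheses $N>p^2s_1$ together with $q<\frac{N(p-1)}{N-s_1}$ are designed so that the $(s_2,q)$-seminorm of $U_{\eps,y}$ is of strictly smaller order than that correction, uniformly in $y$. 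Dually, define the barycenter
\[
\beta(u) = \frac{\int_{\Rn} x\,|u(x)|^{p^*_{s_1}}\,dx}{\int_{\Rn} |u(x)|^{p^*_{s_1}}\,dx}
\]
on $\mathcal{N}_\theta$ and, using a concentration-type lemma, show that if $u_n\in\mathcal{N}_\theta$ with $I_\theta(u_n)\to c_\theta:=\inf_{\mathcal{N}_\theta}I_\theta$, then $\mathrm{dist}(\beta(u_n),\Om)\to 0$. Hence for $\theta,\eps$ small enough, $\beta\circ\Phi_{\theta,\eps}:\Om\to\Om^+$ is homotopic to the inclusion into a small neighborhood $\Om^+$ of $\Om$ satisfying $\text{cat}(\Om^+)=\text{cat}(\Om)$.

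Finally, Lusternik--Schnirelmann theory applied to $I_\theta$ restricted to a sublevel $\mathcal{N}_\theta\cap\{I_\theta\le d\}$ with $d\in(c_\theta,c_\infty)$ (where deformation is legitimate by the $(PS)_c$ condition established above) produces at least $\text{cat}_\Om(\Om)$ critical points. The main obstacle I expect will be the uniform-in-$y$ energy expansion of $\Phi_{\theta,\eps}(y)$ in the mixed $(p,q)$ fractional regime: pinning down the precise orders of the dominant $(s_1,p)$-bubble energy, the sub-leading $(s_2,q)$-seminorm and the subcritical correction, then choosing $\theta_{**}>0$ so that $\sup_{y\in\Om}I_\theta(\Phi_{\theta,\eps}(y))<c_\infty$ holds uniformly. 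This comparison is the technical core that both the Nehari compactness below $c_\infty$ and the barycenter homotopy rely on.
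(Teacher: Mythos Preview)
Your proposal is correct and follows the same Benci--Cerami/Lusternik--Schnirelmann strategy as the paper: Nehari manifold as natural constraint, Palais--Smale below the threshold $\frac{s_1}{N}S_{s_1,p}^{N/(s_1p)}$, and two continuous maps linking the topology of $\Omega$ to a low sublevel of $I_\theta|_{N_\theta}$. The tactical choices, however, differ in two places worth noting.

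First, for the map from the domain into the Nehari sublevel, you propose cut-off bubbles $U_{\eps,y}$ centered at every $y\in\Omega$ and then a uniform-in-$y$ energy expansion; you correctly flag this as the technical core. The paper bypasses this uniformity issue entirely: it solves once the radial model problem on a fixed ball $B_\delta\subset\Omega$, obtaining a nonnegative radial minimizer $v_\theta\in N_{\theta,B_\delta}$ with $I_{\theta,B_\delta}(v_\theta)=n_\theta<\frac{s_1}{N}S_{s_1,p}^{N/(s_1p)}$, and then defines $\gamma:\Omega_\delta^-\to N_\theta$ by translating $v_\theta$. Since translation is an isometry of all the relevant norms, $I_\theta(\gamma(y))\equiv n_\theta$ exactly, so no $y$-uniform estimate is needed at all. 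The bubble expansion (your main obstacle) is still carried out in the paper, but only once and only at a single point, to prove $n_\theta<\frac{s_1}{N}S_{s_1,p}^{N/(s_1p)}$.

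Second, your barycenter $\beta$ is normalised by $\int|u|^{p^*_{s_1}}$, whereas the paper's $\tau(u)=S_{s_1,p}^{-N/(s_1p)}\int_\Omega|u^+|^{p^*_{s_1}}x\,dx$ is not; consequently the paper must prove an extra lemma ($\alpha_\theta\to 1$ as $\theta\to 0$) and build an explicit homotopy $H_\theta$ to correct for the scalar factor. Your normalised choice is cleaner here and gives $\beta\circ\Phi_{\theta,\eps}(y)\approx y$ directly. In summary: your route trades a harder bubble estimate for a simpler barycenter computation, while the paper does the reverse.
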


The rest of the paper is organized as follows. Section 2 is preliminaries, where we prove $X_{0,s_1,p}(\Om)$ is embedded in  $X_{0,s_2,q}(\Om)$ and the concentration compactness lemma for the p-fractional case. Section 3, Section 4 and Section 5 deals with the proof of Theorem \ref{Thm}, Theorem \ref{thm1} and Theorem \ref{thm2} respectively. The paper is concluded with an appendix where we recall the statement of classical deformation lemma, general mountain pass lemma and some standard properties of genus. 

Before concluding the introduction, we would like to remark that as a future direction it would be interesting to study similar kind of problems by considering even more general kernel, again both fractional and nonlinear, but taking into account rough coefficients. Thus, by replacing the fractional Laplacians in ${(P_{\theta,\la})}$ with the nonlinear integro-differential operator given by
$$\mathcal{L}_K u(x)=P.V. \int_{\Rn}|u(y)-u(x)|^{p-2}(u(y)-u(x))K(x,y)dy,\, x \in \Rn$$
where the symmetric function $K$ is a Gagliardo-type kernel (namely, an $(s, p)$-kernel) with measurable coefficients. In this respect, the strictly related papers by Di Castro et Al. (\cite{Ref-1},\cite{Ref-2}), which deal with the general nonlinear fractional operators above could be the starting point in order to study such a generalization.

\section{Preliminary}
\subsection{Besov-Sobolev embeddings}
In this subsection first we define Besov space of $\Rn$ and $\Om$. For $1\leq i \leq N$ and $h\in\R$, let $\De_i^h u$ denote the difference quotient defined by $\De_i^h u(x)=u(x+he_i)-u(x), \quad x \in \Rn.$

\begin{definition}\cite[pg. 415]{Leoni}
 Let $1 \leq p,\, q \leq \infty$ and $0<s<1$.  A function $u \in L^1_{loc}(\Rn)$ belong to the Besov space $B^s_{p,q}(\Rn)$ if
$$\|u\|_{B^s_{p,q}(\Rn)}=|u|_{L^p(\Rn)}+[u]_{B^s_{p,q}(\Rn)} <\infty,$$ 
where 
\begin{equation}
[u]_{B^s_{p,q}(\Rn)}=\begin{cases}
\displaystyle\sum_{i=1}^{N}\bigg(\displaystyle \int_{0}^{\infty}\|\De_i^h u\|^q_{L^p(\Rn)}\frac{dh}{h^{1+sq}}\bigg)^{\frac{1}{q}},\,\ q<\infty,\\
\displaystyle\sum_{i=1}^{N}\sup_{h> 0}\frac{1}{h^s}\|\De^h_i u\|_{L^p(\Rn)},\,\ q=\infty.
\end{cases}
\end{equation}	
\end{definition}

\begin{definition} Let $\mathcal{D'}(\Om)$ denote the set of all distributions over $\Om$. For $1\leq p,\, q \leq \infty$, and $0<s<1$,  we set
$$B^s_{p,q}(\Om)= \{u \in \mathcal{D'}(\Om) : \,\exists\, g \in B^s_{p,q}(\Rn) \quad\mbox{with}\quad g|_{\Om}=u \}$$
and $$\|u\|_{B^s_{p,q}(\Om)}=\inf_{g \in B^s_{p,q}(\Rn),\, g|_{\Om}=u} \|g\|_{B^s_{p,q}(\Rn)}.$$	
$B^s_{p,q}(\Om)$ is called the Besov Space over $\Om$.
\end{definition}	

For more details about Besov space, we refer \cite{Leoni} and \cite{Tr}.

\begin{lemma}\lab{embed}
	Let $1 \leq q\leq p \leq \infty$ and $0<s_2< s_1<1.$ Then 
	$$W^{s_1, p}(\Om)\subset W^{s_2, q}(\Om). $$	\end{lemma}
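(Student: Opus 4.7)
The plan is to split the required inclusion into two parts: the $L^q$ embedding $L^p(\Om)\hookrightarrow L^q(\Om)$ and the claim that the $(s_2,q)$-Gagliardo seminorm $\iint_{\Om\times\Om}|u(x)-u(y)|^q|x-y|^{-(N+s_2 q)}\,dx\,dy$ is controlled by the analogous $(s_1,p)$-seminorm. The $L^q$ part is immediate since $\Om$ is bounded: H\"older's inequality with exponent $p/q$ yields $|u|_q\leq|\Om|^{1/q-1/p}|u|_p$. The content of the lemma is therefore the seminorm comparison.

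I would first dispose of the trivial sub-case $p=q$: because $|x-y|\leq\mathrm{diam}(\Om)$ on $\Om\times\Om$, one has the pointwise bound
$$\f{1}{|x-y|^{N+s_2 p}}\leq\mathrm{diam}(\Om)^{(s_1-s_2)p}\cdot\f{1}{|x-y|^{N+s_1 p}},$$
which integrates at once to give the desired seminorm inequality.

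For the main case $q<p$, the natural tool is H\"older's inequality with conjugate exponents $p/q$ and $p/(p-q)$. I would rewrite
$$\f{|u(x)-u(y)|^q}{|x-y|^{N+s_2 q}}=\left(\f{|u(x)-u(y)|^p}{|x-y|^{N+s_1 p}}\right)^{q/p}|x-y|^{\alpha},\qquad \alpha:=\f{(N+s_1 p)q}{p}-(N+s_2 q),$$
so that H\"older factors the integral over $\Om\times\Om$ into the $(s_1,p)$-seminorm raised to the power $q$ times the residual integral of $|x-y|^{\alpha p/(p-q)}$ over $\Om\times\Om$.

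The only thing left to verify is the finiteness of that residual integral. A direct computation gives $\alpha p/(p-q)=-N+(s_1-s_2)qp/(p-q)$, which is strictly greater than $-N$ precisely because $s_1>s_2$. Hence the diagonal singularity is locally integrable (pass to $z=x-y$), and the boundedness of $\Om$ ensures that the outer variable is restricted to a set of finite measure, closing the argument. This tuning of the H\"older pairing to make the residual exponent exceed $-N$ is the only non-cosmetic step, and it is where the hypothesis $s_1>s_2$ enters essentially.
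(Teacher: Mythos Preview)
Your proof is correct and is genuinely different from the paper's. The paper proceeds abstractly: it invokes the Besov embedding $B^{s_1}_{p,p}(\Om)\subset B^{s_2}_{q,q}(\Om)$ from Triebel's monograph (valid because $q\leq p$ and $s_2<s_1$ force $s_2-N/q<s_1-N/p$), and then identifies $B^s_{p,p}(\Om)$ with $W^{s,p}(\Om)$ via the equivalence of norms. Your argument is instead entirely self-contained: you control the $(s_2,q)$-Gagliardo seminorm directly by H\"older's inequality on $\Om\times\Om$, reducing the question to the local integrability of $|x-y|^{-N+(s_1-s_2)pq/(p-q)}$ near the diagonal, which holds precisely because $s_1>s_2$. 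This elementary route has the advantage of exposing exactly where the strict inequality $s_2<s_1$ is used and of avoiding the Besov machinery altogether; the paper's route, on the other hand, applies verbatim in greater generality (e.g.\ it does not rely on an explicit double-integral representation of the norm). One small remark: your argument tacitly treats only finite $p$, whereas the lemma is stated for $1\leq q\leq p\leq\infty$; since the paper only applies the lemma with $p<N/s_1<\infty$, this is harmless.
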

\begin{proof}
Since $q\leq p$ and $s_2< s_1$ implies $s_2-\frac{N}{q}<s_1-\frac{N}{p}$, from \cite[Theorem (i), pg. 196]{Tr}, we have 
$$B^{s_1}_{p,p}(\Om) \subset B^{s_2}_{q,q}(\Om).$$ 
Further, from \cite[pg. 209]{Tr}), it follows that $|u|_{L^p(\Om)}+\displaystyle\bigg(\int_{\Om\times\Om}\frac{|u(x)-u(y)|^p}{|x-y|^{N+sp}}dxdy \bigg)^{\frac{1}{p}}$
is an equivalent norm for $\|u\|_{B^s_{p,p}(\Om)}$. Therefore, $B^s_{p,p}(\Om)=W^{s,p}(\Om)$ for $1 \leq p \leq \infty$ and $0<s<1$. Hence the lemma follows.
\end{proof}	

Note that the assertion of the above Lemma fails when $s_1=s_2$, see \cite{MirS} for the counterexample.  

\begin{lemma}\label{Lemma4}
Let $0<s_2< s_1<1,\, 1<q \leq p$ and $\Om$ be a smooth bounded domain in $\Rn$, where $N>s_1p$. Then $X_{0,s_1,p}(\Om)\subset X_{0,s_2,q}(\Om)$ and there exists $C=C(|\Om|,N,p,q,s_1, s_2)>0$ such that $$\|u\|_{0,s_2,q} \leq C\|u\|_{0,s_1, p} \quad \forall \, u \in X_{0,s_1,p}(\Om).$$
\end{lemma}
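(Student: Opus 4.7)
The plan is to reduce the claim to Lemma \ref{embed} applied on a slightly enlarged smooth bounded domain $\widetilde\Om\supset\overline\Om$, exploiting that functions in $X_{0,s_1,p}(\Om)$ vanish outside $\Om$, and then to absorb the remaining tail in the Gagliardo integral using the positive distance between $\Om$ and $\widetilde\Om^c$.

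Fix a smooth bounded domain $\widetilde\Om$ with $\overline\Om\subset\widetilde\Om$, and set $\de:=\mathrm{dist}(\Om,\widetilde\Om^c)>0$. Given $u\in X_{0,s_1,p}(\Om)$, the first step is to verify
$$\|u\|_{W^{s_1,p}(\widetilde\Om)}\leq C\,\|u\|_{0,s_1,p}.$$
Since $u\equiv 0$ on $\widetilde\Om\setminus\Om$, we have $|u|_{L^p(\widetilde\Om)}=|u|_{L^p(\Om)}$, which is controlled by $\|u\|_{0,s_1,p}$ via the embedding $X_{0,s_1,p}(\Om)\hookrightarrow L^{p^*_{s_1}}(\Om)$ together with H\"older on the bounded set $\Om$ (noting $p<p^*_{s_1}$, as $N>s_1 p$). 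For the Gagliardo seminorm, the integrand vanishes on $(\widetilde\Om\setminus\Om)\times(\widetilde\Om\setminus\Om)$, so
$$[u]^p_{W^{s_1,p}(\widetilde\Om)}=\int_{(\widetilde\Om\times\widetilde\Om)\setminus(\widetilde\Om\setminus\Om)^2}\frac{|u(x)-u(y)|^p}{|x-y|^{N+s_1 p}}\,dxdy\leq\int_Q\frac{|u(x)-u(y)|^p}{|x-y|^{N+s_1 p}}\,dxdy=\|u\|_{0,s_1,p}^p.$$

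Next, Lemma \ref{embed} applied to $\widetilde\Om$ gives $W^{s_1,p}(\widetilde\Om)\subset W^{s_2,q}(\widetilde\Om)$ continuously, hence $[u]_{W^{s_2,q}(\widetilde\Om)}\leq C\|u\|_{0,s_1,p}$. A direct bookkeeping, using $u\equiv 0$ on $\Om^c$ and the vanishing of the integrand on $(\widetilde\Om\setminus\Om)^2$, yields
$$\|u\|_{0,s_2,q}^q=[u]^q_{W^{s_2,q}(\widetilde\Om)}+2\int_\Om|u(x)|^q\int_{\widetilde\Om^c}\frac{dy}{|x-y|^{N+s_2 q}}\,dx.$$
For the tail term, $|x-y|\geq\delta$ whenever $x\in\Om$ and $y\in\widetilde\Om^c$, so the inner integral is bounded by $C(N,s_2,q)\,\delta^{-s_2 q}$. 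Combined with $|u|_{L^q(\Om)}^q\leq C\|u\|_{0,s_1,p}^q$ (again from the $L^p$ bound above and H\"older, since $q\leq p$), this gives the conclusion.

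The main subtlety is the treatment of cross terms $(x,y)$ with $x\in\Om$ and $y$ just outside $\Om$: a naive split into $\Om\times\Om$ and $\Om\times\Om^c$ parts would leave a term of the form $\int_\Om|u(x)|^q\,d(x)^{-s_2 q}\,dx$, essentially a fractional Hardy inequality at exponents $(s_2,q)$ that would require a separate argument. Enlarging $\Om$ to $\widetilde\Om$ circumvents this by pushing these cross terms back into an honest Gagliardo seminorm on $\widetilde\Om$, where Lemma \ref{embed} applies directly, leaving only the harmless far-field integral over $\widetilde\Om^c$.
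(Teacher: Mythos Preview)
Your proof is correct and takes a genuinely different route from the paper. The paper applies Lemma~\ref{embed} directly on $\Omega$ to obtain $u\in W^{s_2,q}(\Omega)$, and then invokes a Sobolev extension theorem for smooth domains to pass from $W^{s_2,q}(\Omega)$ to $W^{s_2,q}(\mathbb{R}^N)$, after which $\|u\|_{0,s_2,q}\leq\|u\|_{W^{s_2,q}(\mathbb{R}^N)}$ is immediate. Your approach instead enlarges the domain to $\widetilde\Omega\supset\overline\Omega$, applies Lemma~\ref{embed} on $\widetilde\Omega$, and handles the residual far-field integral over $\widetilde\Omega^c$ explicitly via the uniform lower bound $|x-y|\geq\delta$. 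The paper's argument is shorter but leans on the extension theorem as a black box (and one should in principle check that the zero extension, rather than the abstract extension operator, is what gives the bound on $\|u\|_{W^{s_2,q}(\mathbb{R}^N)}$). Your argument is more self-contained, makes the role of the tail completely explicit, and in fact does not require regularity of $\partial\Omega$ at all---only $\widetilde\Omega$ needs to be smooth, and a large ball suffices. Your remark about the enlargement circumventing a fractional Hardy-type term at exponents $(s_2,q)$ is exactly the point; the paper sidesteps the same issue by appealing to extension instead.
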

\begin{proof}

Let $u \in X_{0,s_1,p}(\Om)$. Then $u \in W^{s_1,p}(\Rn)$ with $u \equiv 0$ a.e. in $\Rn \setminus \Om.$  Note that, thanks to H\"{o}lder inequality and Sobolev inequality, we have
\Bea
\|u\|^p_{W^{s_1,p}(\Om)}&=&|u|^p_{p}+\int_{\Om \times \Om}\frac{|u(x)-u(y)|^p}{|x-y|^{N+s_1p}}dxdy\\
&\leq& |u|_{p^*_{s_1}}^p|\Om|^{1-\frac{p}{p^*_{s_1}}}+\int_{\Rn \times \Rn} \frac{|u(x)-u(y)|^p}{|x-y|^{N+s_1p}}dxdy\\
&\leq& (C|\Om|^{1-\frac{p}{p^*_{s_1}}}+1)\|u\|_{0,s_1,p}^p.\\
\Eea
This proves that $X_{0,s_1,p}(\Om)\subset W^{s_1,p}(\Om)$. Consequently, by Lemma \ref{embed}  we also have $W^{s_1,p}(\Om) \subset W^{s_2,q}(\Om).$  As a result, $u \in W^{s_2,q}(\Om)$ with $u \equiv 0$ a.e. in $\Rn \setminus \Om.$ Further, as $\pa \Om$ is smooth,  
the embedding $W^{s_2,q}(\Om) \hookrightarrow W^{s_2,q}(\Rn)$ is continuous, that is,
\begin{equation}\label{n-1}
\|u\|_{W^{s_2,q}(\Rn)} \leq C(|\Om|,q,N)\|u\|_{W^{s_2,q}(\Om)} \quad\mbox{for all}\quad u \in W^{s_2,q}(\Om).
\end{equation}
Therefore, 
\begin{equation}\label{n-2}
\|u\|_{W^{s_2,q}(\Rn)} \leq C(|\Om|,N,p,q,s_1, s_2)\|u\|_{0,s_1,p} \quad\mbox{for all}\quad u \in X_{0,s_1,p}(\Om).
\end{equation}	
Since, $\|u\|_{0,s_2,q}^q \leq \|u\|_{W^{s_2,q}(\Rn)}^q$,	it follows
\begin{equation}\label{n-3}
\|u\|_{X_{0,s_2,q}(\Om)} \leq C(|\Om|,N,s_1,s_2,p,q)\|u\|_{X_{0,s_1,p}(\Om)} \quad\mbox{for all}\quad u \in X_{0,s_1,p}(\Om).
\end{equation}
Hence the lemma follows.
\end{proof}

\subsection {Concentration-compactness}
For $s\in(0,1)$, define
 $$\dot{W}^{s,p}(\Rn):=\bigg\{u\in L^{p^*_s}(\Rn) :  \displaystyle\int_{\R^{2N}} \frac{|u(x)-u(y)|^p}{|x-y|^{N+sp}}dxdy<\infty\bigg\}$$ and 
\be\lab{S}S_{s,p}=\inf_{u\in\dot{W}^{s,p}(\Rn)\setminus\{0\}} \frac{\displaystyle\int_{\R^{2N}} \frac{|u(x)-u(y)|^p}{|x-y|^{N+sp}}dxdy}{\bigg(\displaystyle\int_{\Rn}|u|^{p^*_s}\bigg)^\frac{p}{p^*_s}}.\ee

Next, we fix some notations: 
$D^su(x):=\displaystyle\int_{\Rn}\frac{|u(x)-u(y)|^p}{|x-y|^{N+sp}}dy$. Thus, $|D^su|_p^p=\|u\|_{0,s,p}^p$, $C_c(\Rn)$ denotes the set of all continuous functions with compact support. 
$\|\mu\|:=\int_{\Rn}d\mu$. $\mathcal{M}(\Rn)$ denotes the space of finite measures on $\Rn.$ We say a sequence $(\mu_n)$ converges weakly to $\mu$ in $\mathcal{M}(\Rn),$ if $$\<\mu_n,\phi\>:=\int_{\Rn}\phi\,d\mu_n \to \<\mu,\phi\> \quad\forall\quad \phi \in C_c(\Rn)$$
 and it is denoted by $\mu_n \deb \mu$.

\begin{theorem}\label{Willem-lemma}
 Let $s\in (0,1)$	 and $p>1$. Assume $\{u_n\} \subset X_{0,s,p}(\Om)$ is a nonnegative sequence such that $|u_n|_{p^*_s}=1$ and $\|u_n\|_{0,s,p}^p \to S_{s,p}$ as $n \to \infty.$ Then, there exists a sequence $\{ y_n,\la_n\} \in \Rn \times \R^+$ such that
 		\begin{equation}\label{v-n}
 			v_n(x):=\la_n^{\frac{(N-sp)}{p}}u_n(\la_nx+y_n)
 		\end{equation} 
 		has a convergent subsequence (still denoted by $v_n$) such that $v_n \to v$ in $\dot{W}^{s,p}(\Rn)$ where $v(x)>0$ in $\Rn.$  In particular, there exists a minimizer for $S_{s,p}.$ Moreover, we have, $\la_n \to 0$ and $y_n \to y \in \overline{\Om}$ as $n \to \infty.$
 	\end{theorem}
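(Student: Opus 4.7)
The plan is to apply P.L. Lions' concentration-compactness method, adapted to the fractional $p$-Laplacian setting. The scaling parameters are fixed by the Lévy concentration function: with $\delta\in(0,1)$ (say $\delta=1/2$), the function $Q_n(r):=\sup_{y\in\Rn}\int_{B_r(y)}|u_n|^{p^*_s}\,dx$ is continuous and nondecreasing with $Q_n(0)=0$ and $Q_n(\infty)=|u_n|_{p^*_s}^{p^*_s}=1$, so there exist $\lambda_n>0$ and $y_n\in\Rn$ with $\int_{B_{\lambda_n}(y_n)}|u_n|^{p^*_s}\,dx=Q_n(\lambda_n)=\delta$. The rescaling $v_n(x):=\lambda_n^{(N-sp)/p}u_n(\lambda_n x+y_n)$ is invariant under the relevant scalings, so $|v_n|_{p^*_s}=1$, $\|v_n\|_{0,s,p}^p\to S_{s,p}$, and it carries the tightness
$$\int_{B_1(0)}|v_n|^{p^*_s}\,dx=\sup_{y\in\Rn}\int_{B_1(y)}|v_n|^{p^*_s}\,dx=\delta.$$

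Because $\dot{W}^{s,p}(\Rn)$ is uniformly convex and $\{v_n\}$ is bounded, extract a subsequence so that $v_n\rightharpoonup v\geq 0$ in $\dot{W}^{s,p}(\Rn)$ and a.e. The key step is the fractional concentration-compactness lemma: up to a further subsequence, $|D^s v_n|^p\,dx\rightharpoonup\mu\geq|D^s v|^p\,dx+\sum_j\mu_j\delta_{x_j}$ and $|v_n|^{p^*_s}\,dx\rightharpoonup\nu=|v|^{p^*_s}\,dx+\sum_j\nu_j\delta_{x_j}$ with $\mu_j\geq S_{s,p}\nu_j^{p/p^*_s}$, together with the mass-at-infinity pair $\mu_\infty,\nu_\infty$ satisfying $\mu_\infty\geq S_{s,p}\nu_\infty^{p/p^*_s}$. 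The identity $S_{s,p}=\|\mu\|+\mu_\infty$ combined with the Sobolev inequality for $v$ yields
$$1\geq (|v|_{p^*_s}^{p^*_s})^{p/p^*_s}+\sum_j\nu_j^{p/p^*_s}+\nu_\infty^{p/p^*_s},$$
while the normalization $|v|_{p^*_s}^{p^*_s}+\sum_j\nu_j+\nu_\infty=1$ together with strict subadditivity of $t\mapsto t^{p/p^*_s}$ (since $p<p^*_s$) forces exactly one of these nonnegative quantities to equal $1$. The tightness gives $\nu_j\leq\delta<1$ for every atom and $\|\nu\|\geq\delta$, so $\nu_\infty\leq 1-\delta<1$; the surviving term is therefore $|v|_{p^*_s}^{p^*_s}=1$. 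Consequently $v_n\to v$ in $L^{p^*_s}(\Rn)$ and $\|v\|_{0,s,p}^p=S_{s,p}$, so $v$ minimises $S_{s,p}$; the combination of weak convergence and norm convergence in the uniformly convex space $\dot{W}^{s,p}(\Rn)$ upgrades to strong convergence. The positivity $v>0$ on $\Rn$ follows from the fractional strong maximum principle applied to the Euler--Lagrange equation $(-\Delta)^{s}_p v=S_{s,p}v^{p^*_s-1}$.

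Finally, the scaling parameters: if $\lambda_n\not\to 0$ along some subsequence, then $\text{supp}(v_n)\subset(\Om-y_n)/\lambda_n$ has uniformly bounded diameter $\text{diam}(\Om)/\lambda_n$; combined with the absence of mass at infinity, the limit $v$ would have bounded support, contradicting the fact that every minimiser of $S_{s,p}$ is strictly positive on all of $\Rn$. Hence $\lambda_n\to 0$. Since $u_n$ is supported in $\Om$ and $\int_{B_{\lambda_n}(y_n)}|u_n|^{p^*_s}\,dx=\delta>0$, the ball $B_{\lambda_n}(y_n)$ intersects $\Om$, giving $\text{dist}(y_n,\Om)\leq\lambda_n\to 0$; since $\Om$ is bounded, $\{y_n\}$ is bounded and any cluster point $y$ belongs to $\overline{\Om}$. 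The main obstacle I anticipate is the nonlocal concentration-compactness inequality with the sharp constant $S_{s,p}$: unlike the local case, the Gagliardo seminorm does not localise cleanly under smooth cut-offs, so the Brezis--Lieb splitting requires careful control of long-range cross-terms through Vitali-type coverings.
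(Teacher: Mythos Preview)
Your proposal is correct and follows essentially the same route the paper indicates: the paper's proof merely cites Lemma~\ref{Willem-lemma 1.40} (the fractional concentration--compactness lemma) and says the argument then proceeds as in \cite[Lemma~1.41]{W} and \cite[Section~I.4]{Lions}, which is precisely the L\'evy concentration function / rescaling / subadditivity argument you have written out. The only difference is cosmetic: you use the atom-decomposition formulation of concentration--compactness, while the paper's Lemma~\ref{Willem-lemma 1.40} states only total masses together with the single-point concentration conclusion in the equality case, but these are equivalent and lead to the same dichotomy. The ``obstacle'' you flag---controlling the nonlocal cross terms when localising the Gagliardo energy---is exactly what the paper handles explicitly in the proof of Lemma~\ref{Willem-lemma 1.40} (Steps~1 and~2), so you may simply invoke that lemma rather than redo the cut-off estimates.
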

 
\begin{proof}
For $p=2$, this lemma has been proved by Palatucci-Pisante in \cite[Theorem 1.3]{PP}. For general $p>1$, using Lemma \ref{Willem-lemma 1.40} (see the next lemma), the proof can be completed following the similar steps  as in \cite[Lemma 1.41]{W}(also see \cite[Section I.4, Example (iii)]{Lions} ). We omit the details.
\end{proof}		

\begin{remark} that the result proven by Palatucci and Pisante is \cite[Theorem 1.3]{PP} is still valid even for $s > 1$ and the subsequent proof is much in the spirit of  \cite{PP-1}.
\end{remark}

\begin{lemma}\label{Willem-lemma 1.40}
 Let $s\in (0,1)$	 and $p> 1$. Assume $\{u_n\}$ be a sequence in $\dot{W}^{s,p}(\Rn)$ such that 
	\begin{equation}\label{C-1}
	\begin{cases}
	u_n \deb u \quad\mbox{in}\quad \dot{W}^{s,p}(\Rn),\\
	|D^s(u_n-u)|^p \deb \mu \quad\mbox{in}\quad \mathcal{M}(\Rn),\\
	|u_n-u|^{p^*_s} \deb \nu \quad\mbox{in}\quad \mathcal{M}(\Rn),\\
	u_n \to u \quad\mbox{ a.e. on}\quad \Rn,
	\end{cases}
	\end{equation}
	and define
	\begin{equation}\label{C-2}
	\begin{cases}
	\mu_\infty:=\lim_{R \to \infty} \limsup_{n \to \infty} \displaystyle\int_{|x|\geq R}|D^s u_n|^pdx,\\
	\nu_\infty:=\lim_{R \to \infty} \limsup_{n \to \infty}  \displaystyle\int_{|x|\geq R}|u_n|^{p^*_s}dx.
	\end{cases}
	\end{equation}

Then, we have
\begin{equation}\label{C-3}
S_{s,p}\|\nu\|^{\frac{p}{p^*_s}} \leq \|\mu\|,
\end{equation}
\begin{equation}\label{C-4}
S_{s,p}\nu_\infty^{\frac{p}{p^*_s}} \leq \mu_\infty,
\end{equation}
\begin{equation}\label{C-5}
\limsup_{n \to \infty} |D^s u_n|^p_p=|D^s u|_p^p+\|\mu\|+\mu_\infty,
\end{equation}
\begin{equation}\label{C-6}
\limsup_{n \to \infty} |u_n|^{p^*_s}_{p^*_s}=|u|_{p^*_s}^{p^*_s}+\|\nu\|+\nu_\infty.
\end{equation}
Moreover, if $u=0$ and $S_{s,p}\|\nu\|^{p/{p^*_s}}=\|\mu\|,$ then $\mu, \nu$ are concentrated at a single point. 
\end{lemma}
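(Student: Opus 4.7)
The plan is to reproduce, in the fractional Gagliardo setting, the two-step argument of P.-L.\ Lions and Willem \cite[Lemma 1.40]{W}. Step (i) localizes by multiplying $v_n:=u_n-u$ with smooth cutoffs and applies the fractional Sobolev inequality \eqref{S} to deduce the measure inequalities \eqref{C-3}--\eqref{C-4}. Step (ii) uses a fractional Brezis--Lieb identity for the Gagliardo seminorm together with a truncation at infinity to obtain the mass decompositions \eqref{C-5}--\eqref{C-6}. The concentration conclusion is then a strict-concavity argument applied to the localized version of \eqref{C-3}.

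For \eqref{C-3}, I would fix $\phi\in C_c^\infty(\Rn)$ and apply \eqref{S} to $\phi v_n$:
$$S_{s,p}\Big(\int_{\Rn}|\phi|^{p^*_s}|v_n|^{p^*_s}\,dx\Big)^{p/p^*_s}\le\int_{\R^{2N}}\frac{|\phi(x)v_n(x)-\phi(y)v_n(y)|^p}{|x-y|^{N+sp}}\,dxdy.$$
The Leibniz-type pointwise bound $|\phi(x)v_n(x)-\phi(y)v_n(y)|^p\le(1+\eps)|\phi(x)|^p|v_n(x)-v_n(y)|^p+C_\eps|v_n(y)|^p|\phi(x)-\phi(y)|^p$ splits the right-hand side into a main term $(1+\eps)\int_{\Rn}|\phi|^p|D^sv_n|^p\,dx$ and a remainder $C_\eps\int_{\Rn}|v_n(y)|^pK_\phi(y)\,dy$, where $K_\phi(y):=\int_{\Rn}|\phi(x)-\phi(y)|^p|x-y|^{-N-sp}dx$. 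Since $\phi\in C_c^\infty$ and $s\in(0,1)$, the kernel $K_\phi$ is bounded and decays like $|y|^{-N-sp}$ at infinity; combined with $v_n\to 0$ a.e., $v_n\to 0$ in $L^p_{loc}$ from the compact fractional embedding, and $\sup_n|v_n|_{p^*_s}<\infty$, Vitali gives that the remainder tends to $0$, while the main term converges to $(1+\eps)\int|\phi|^p d\mu$ by definition of $\mu$. Letting first $n\to\infty$ and then $\eps\to 0^+$ yields $S_{s,p}(\int|\phi|^{p^*_s}d\nu)^{p/p^*_s}\le\int|\phi|^p d\mu$ for every $\phi\in C_c(\Rn)$, which gives \eqref{C-3} after letting $|\phi|\nearrow 1$. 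The inequality \eqref{C-4} is obtained by the same procedure with $\phi$ replaced by a smooth cutoff $\psi_R(x):=\psi(x/R)$ that vanishes on $B_1$ and equals $1$ outside $B_2$, letting first $n\to\infty$ and then $R\to\infty$.

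For \eqref{C-5}--\eqref{C-6} I would use Brezis--Lieb: for $L^{p^*_s}$ this is the classical lemma, while for the Gagliardo seminorm I invoke its fractional version to obtain $[u_n]_{s,p}^p=[u]_{s,p}^p+[v_n]_{s,p}^p+o(1)$. A further cutoff argument splits $\int_{\Rn}|D^sv_n|^p\,dx$ into a part over a large ball $B_R$, which converges as $n\to\infty$ to $\mu(\overline{B_R})$ and as $R\to\infty$ to $\|\mu\|$ by the weak convergence of the finite measures, and a part over $\Rn\setminus B_R$, which yields $\mu_\infty$ upon noting that $u\in\dot W^{s,p}$ forces $\int_{|x|\ge R}|D^su|^p\,dx\to 0$ (so the tails of $v_n$ and $u_n$ agree in the limit). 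The derivation of \eqref{C-6} via $|u_n|^{p^*_s}$ and $\nu_\infty$ is strictly analogous.

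Finally, assume $u=0$ and $S_{s,p}\|\nu\|^{p/p^*_s}=\|\mu\|$. Approximating indicators in \eqref{C-3} gives $S_{s,p}\nu(E)^{p/p^*_s}\le\mu(E)$ for every Borel $E$. For any finite Borel partition $\{E_i\}$ of $\Rn$,
$$S_{s,p}\sum_i\nu(E_i)^{p/p^*_s}\le\sum_i\mu(E_i)=\|\mu\|=S_{s,p}\Big(\sum_i\nu(E_i)\Big)^{p/p^*_s}.$$
Since $p/p^*_s<1$, the standard subadditivity $(\sum_i a_i)^{p/p^*_s}\le\sum_i a_i^{p/p^*_s}$ for nonneg $a_i$ is strict whenever two or more $a_i$ are positive, so combining with the above forces at most one $\nu(E_i)$ to be nonzero in every partition; hence $\nu=b\delta_{x_0}$ is a single Dirac mass, and \eqref{C-3} then pins $\mu=S_{s,p}b^{p/p^*_s}\delta_{x_0}$. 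The main technical obstacle is the fractional Leibniz cross term in step (i): unlike the local case, where $\na(\phi v)=\phi\na v+v\na\phi$ is exact, in the fractional setting one must exploit both the tail decay of $K_\phi$ and the $L^p_{loc}$-convergence of $v_n$ to kill the remainder.
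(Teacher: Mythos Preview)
Your proposal is correct and follows essentially the same route as the paper: the same Leibniz-type splitting (the paper cites it as \cite[(2.1)]{MS}), the same handling of the cross term via $L^p_{\mathrm{loc}}$-convergence on a large ball plus the $|y|^{-N-sp}$ tail decay of $K_\phi=|D^s\phi|^p$ controlled by H\"older against the uniform $L^{p^*_s}$ bound, and the same Brezis--Lieb plus cutoff decomposition for \eqref{C-5}--\eqref{C-6}. The only cosmetic differences are that the paper first treats the case $u\equiv 0$ and then reduces the general case to it, carries out the tail estimate via an explicit choice of radius $R_\theta$ rather than invoking Vitali, and refers to \cite[Lemma~1.40, Step~3]{W} for the single-point concentration, whereas you spell out the strict-subadditivity partition argument.
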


\begin{remark}(i) In the local case, Lemma \ref{Willem-lemma 1.40} has been proved in \cite[Lemma I.1]{Lions} (see also \cite[Lemma 1.40]{W} for $s=1,\, p=2$). For the concentration-compactness result
in the bounded domain, i.e.,  when $u_n\deb u$ in ${W}^{s,p}_0(\Om)$, we cite 
\cite[Theorem 2.5]{MS}. Combining the ideas of \cite{Lions}, \cite{MS} and \cite{W}, one expects the above lemma to hold for general $s\in(0,1)$ and $p\geq 1$ (see \cite[Section I.4]{Lions}), but as best of our knowledge this lemma has not been proved exclusively anywhere. For $s\in(0,1),\, p=2$, concentration-compactness result in $\Rn$ has been proved in \cite{DMV} using the harmonic extension method of Caffarelli-Silvestre, which clearly does not work for $p\not=2$ case. Therefore we give here the proof for reader's convenience. Our proof is much different from \cite{MS} and \cite{A},  in both the cases the tightness of the sequence was assumed, where as we have not taken any such assumption.

(ii) It's easy to see that for $\phi\in C^{\infty}_0(\Rn)$, $D^s\phi$ does not have compact support. Thus, when $u_n\deb 0$ in $\dot{W}^{s,p}(\Rn)$, one can not just apply Rellich compactness result to 

 $\lim_{n\to\infty}\int_{\Rn}|u_n|^p|D^s\phi|^p$ in order to pass the limit. This makes the situation much different from the local case \cite{Lions} or the nonlocal case when $u_n\deb u$ in $W^{s,p}_0(\Om)$, which was treated in \cite{MS}.  
\end{remark}

 \begin{proof}
 Let us first consider the case $u \equiv 0.$
 
 \vspace{2mm}
 
{\bf Step 1}: In this step we prove $S_{s,p}(\|\nu\|)^{p/{p^*_s}} \leq \|\mu\|.$

\vspace{2mm}

Choosing $\phi \in C_0^{\infty}(\Rn)$ and applying Sobolev inequality, we have
\bea\label{C-7}
S_{s,p}|u_n \phi|_{p^*_s}^{p^*_s} \leq \|u_n \phi\|_{0,s,p}^p&=&|D^s(u_n \phi)|_p^p\no\\
&\leq& (1+\theta)\int_{\Rn}|D^s u_n|^p|\phi|^pdx+c_\theta \int_{\Rn}|D^s \phi|^p|u_n|^pdx,
\eea 	
where, in the last line we have used  \cite[(2.1)]{MS}. Let, supp$(\phi) \in B(0,r)$ for some $r>0.$ Then for a.e. $|x|>r$, 
	\be\label{V}
	|D^s \phi|^p(x)= \int_{B(0,r)}\frac{|\phi(y)|^p}{|x-y|^{N+sp}}dy \leq\int_{B(0,r)}\frac{|\phi(y)|^p}{{(|x|-r)}^{(N+sp)}}dy
	\leq \frac{|\phi|_p^p}{(|x|-r)^{N+sp}}, 
	\ee
	Fix, $R_\theta>r$ large enough (will be chosen later) . Then,
	\bea\lab{11-1-2}c_\theta\int_{\Rn}|D^s \phi|^p|u_n|^pdx&=&c_\theta\int_{B(0,R_\theta)}|D^s \phi|^p|u_n|^pdx+c_\theta\int_{\Rn \setminus B(0,R_\theta)}|D^s \phi|^p|u_n|^pdx\no\\
&=:&J_1(n)+J_2(n),\eea 
We observe that as $u_n \deb u$ in $\dot{W}^{s,p}(\Rn)$ and $u \equiv 0$, it holds 
$u_n \to 0$ in $L^p_{loc}(\Rn).$ Also, $\phi \in C_0^\infty(\Rn)$ implies, $|D^s\phi|^p \in L^\infty(\Rn).$ Therefore,
\be\lab{11-1-1}
\lim_{n\to\infty} J_1(n)=0.
\ee
Clearly, \begin{equation}\label{IV}
|u_n|_{p^*_s}^{p^*_s} \leq c_1 \quad\mbox{for all}\quad n \geq 1
\end{equation}
	for some $c_1>0.$
Consequently, applying H\"older inequality followed  by (\ref{V}) yields
\bea\label{VI}
J_2(n) &\leq& c_\theta c_1^{p/p^*_s}\displaystyle\left(\int_{\Rn\setminus B(0, R_{\theta})}|D^s \phi|^\frac{N}{s}dx\right)^\frac{sp}{N}\no\\
&\leq& c_\theta c_1^{p/p^*_s}|\phi|_p\bigg(\om_N \int_{R_\theta}^{\infty}\frac{t^{N-1}}{(t-r)^{(N+sp)\frac{N}{sp}}}dt \bigg)^{\frac{sp}{N}},
\eea
where $\om_N$ denotes the surface measure of unit sphere in $R^N$. A straight-forward computation yields,
	\begin{equation}
	\int_{R_\theta}^{\infty}\frac{t^{N-1}}{(t-r)^{(N+sp)\frac{N}{sp}}}dt=2^{N-2}\bigg[\frac{sp}{N^2}\frac{1}{(R_\theta-r)^{N^2/sp}}+\bigg(\frac{r^{N-1}sp}{N(N+sp)-sp}\bigg)\frac{1}{(R_\theta-r)^{\frac{N(N+sp)}{sp}-1}}  \bigg]^{\frac{sp}{N}}.
	\end{equation}	
	Choose $R_{\theta}$ such that
	\begin{equation}\label{VII}
	c_\theta c_1^{p/p^*_s}\om_N^{sp/N}|\phi|_p2^{N-2}\bigg[\frac{sp}{N^2}\frac{1}{(R_\theta-r)^{N^2/sp}}+\bigg(\frac{r^{N-1}sp}{N(N+sp)-sp}\bigg)\frac{1}{(R_\theta-r)^{\frac{N(N+sp)}{sp}-1}}  \bigg]^{\frac{sp}{N}}< \theta.
	\end{equation}
As a consequence, 
\be\lab{11-1-4}
J_2(n)<\theta, \quad\forall\,\, n\geq 1.
\ee	
Combining this with \eqref{11-1-1} and \eqref{11-1-2} yields
$$\lim_{n\to\infty} c_\theta\int_{\Rn}|D^s \phi|^p|u_n|^pdx <\theta.$$
Hence, taking the limit $n\to\infty$ in \eqref{C-7} we obtain
\begin{equation}\label{C-9}
S_{s,p}\bigg(\int_{\Rn}|\phi|^{p^*_s}d\nu\bigg)^{p/{p^*_s}} \leq (1+\theta) \int_{\Rn}|\phi|^p d\mu+ \theta.\end{equation}
Since $\theta>0$ is arbitrary, so letting $\theta \to 0$ in (\ref{C-9}) gives
\begin{equation}\label{C-10}
S_{s,p}\bigg(\int_{\Rn}|\phi|^{p^*_s}d\nu\bigg)^{p/p^*_s} \leq \int_{\Rn}|\phi|^pd\mu \quad\forall\quad \phi \in C_0^{\infty}(\Rn).
\end{equation}
Hence, taking supremum over $C_0^{\infty}(\Rn)$, we get
$$S_{s,p}(\|\nu\|)^{p/{p^*_s}} \leq \|\mu\|.$$ 

\vspace{2mm}

{\bf Step 2}: In this step we prove $S_{s,p} \nu_\infty^{p/{p^*_s}} \leq \mu_\infty.$

\vspace{2mm}

For this first fix $R>1$  and choose $\psi_R \in C^{\infty}(\Rn)$ be such that
\begin{equation}
\psi_R(x)=\begin{cases}
1, \,\ |x|>R+1,\\
0, \,\ |x|<R,\\
\end{cases}
0 \leq \psi_R \leq 1 \quad\mbox{in}\quad \Rn.
\end{equation}
 Thanks to Sobolev inequality, we have
$$S_{s,p}\bigg(\int_{\Rn}|\psi_R u_n|^{p^*_s}dx \bigg)^{p/{p^*_s}} \leq \int_{\Rn}|D^s(u_n \psi_R)|^pdx.$$
Therefore, as before we get
\be\lab{12-1-2}S_{s,p}\bigg(\int_{\Rn}|\psi_R|^{p^*_s} |u_n|^{p^*_s}dx \bigg)^{p/{p^*_s}} \leq (1+\theta) \int_{\Rn}|D^su_n|^p |\psi_R|^pdx+c_\theta \int_{\Rn}|u_n|^p|D^s \psi_R|^p dx.\ee
Doing an easy computation, it follows that $D^s \psi_R \in L^\infty(\Rn).$ Therefore, for any $\tilde R>R+1$,
\bea\lab{12-1-1}  c_\theta \limsup_{n \to \infty} \int_{\Rn}|u_n|^p|D^s \psi_R|^p dx&=& c_\theta \limsup_{n \to \infty} \int_{B(0,\tilde R)}|u_n|^p|D^s \psi_R|^p dx\no\\
&&+ c_\theta\limsup_{n \to \infty} \int_{\Rn\setminus B(0,\tilde R)}|u_n|^p|D^s \psi_R|^p dx\no\\
&=&c_\theta\limsup_{n \to \infty} \int_{\Rn\setminus B(0,\tilde R)}|u_n|^p|D^s \psi_R|^p dx.
\eea
Moreover, for $x\in \overline{B(0, R+1)}^c$,
\Bea
|D^s\psi_R(x)|=\int_{\Rn}\frac{|1-\psi_R(y)|^p}{|x-y|^{N+sp}}dy
&\leq&2^{p-1}\int_{B(0,R+1)}\frac{1+\psi_R(y)^p}{|x-y|^{N+sp}}dy\\
&\leq&\frac{2^{p-1}}{(|x|-(R+1))^{N+sp}}\int_{B(0,R+1)}(1+\psi_R(y)^p)dy\\
&\leq&\frac{2^{p-1}\al_N}{(|x|-(R+1))^{N+sp}}\big(2(R+1)^N-R^N\big) ,
\Eea
where $\al_N$ is volume of unit ball in $\Rn$. Therefore, doing the similar analysis as in Step 1, we get an existence of $\tilde R>R+1$, for which 
$$c_\theta \int_{\Rn\setminus B(0,\tilde R)}|u_n|^p|D^s \psi_R|^p dx<\theta.$$ Hence, combining this along with \eqref{12-1-1} and \eqref{12-1-2} and then taking $\theta\to 0$ yields
\begin{equation}\label{C-11}
\limsup_{n \to \infty} S_{s,p} \bigg(\int_{\Rn}|\psi_R|^{p^*_s} |u_n|^{p^*_s}dx \bigg)^{p/{p^*_s}} \leq  \limsup_{n \to \infty} \int_{\Rn}|\psi_R|^p|D^s u_n|^pdx.
\end{equation}

On the other hand, we have
$$\int_{|x|>R+1}|D^s u_n|^pdx \leq \int_{\Rn}\psi_R^p |D^s u_n|^pdx \leq \int_{|x| \geq R} |D^s u_n|^pdx$$ and
$$\int_{|x|>R+1}|u_n|^{p^*_s}dx \leq \int_{\Rn} |u_n|^{p^*_s}\psi_R^{p^*_s}dx \leq \int_{|x| \geq R} |u_n|^{p^*_s}dx.$$

From (\ref{C-2}) we obtain,
\begin{equation}\label{C-12}
\mu_\infty=\lim_{R \to \infty} \limsup_{n \to \infty} \int_{\Rn}\psi_R^p|D^su_n|^pdx, \quad \nu_\infty=\lim_{R \to \infty} \limsup_{n \to \infty} \int_{\Rn}\psi_R^{p^*_s}|u_n|^{p^*_s}dx.
\end{equation}
Substituting (\ref{C-12}) into (\ref{C-11}) yields
$$S_{s,p} \nu_\infty^{p/{p^*_s}} \leq \mu_\infty.$$

{\bf Step 3}:  Assume $S_{s,p}\|\nu\|^{p/p^*_s}=\|\mu\|$. Then following the exact similar analysis as in \cite[Step 3, Lemma 1.40]{W} we get $\mu$ and $\nu$ are concentrated at a single point. 

\vspace{2mm}

{\bf Step 4}: For the general case write $v_n=u_n-u.$ Since $v_n \deb 0$ in $\dot{W}^{s,p}(\Rn),$ it follows $|D^s v_n|^p \deb \mu+|D^s u|^p$ in $\mathcal{M}(\Rn).$

Using Brezis-Lieb lemma, for all $h \in C_c(\Rn)$,  we obtain
$$\int_{\Rn}h|u|^{p^*_s}dx=\lim_{n\to\infty}\int_{\Rn}h|u_n|^{p^*_s}dx-\int_{\Rn}h|v_n|^{p^*_s}dx.$$
This in turn implies
$$|u_n|^{p^*_s} \deb |u|^{p^*_s}+\nu \quad\mbox{in}\quad \mathcal{M}(\Rn).$$
(\ref{C-3}) follows from corresponding inequality of $(v_n).$

\vspace{2mm}

{\bf Step 5}: Since ,
$$\limsup_{n \to \infty} \int_{|x|>R}|D^sv_n|^{p}dx=\limsup_{n \to \infty}\int_{|x|>R}|D^su_n|^{p}dx-\int_{|x|>R}|D^su|^{p}dx ,$$
we obtain $\displaystyle\mu_\infty=\lim_{R \to \infty} \limsup_{n \to \infty}\int_{|x|>R}|D^sv_n|^pdx.$

Similarly, applying Brezis-Lieb lemma to $\displaystyle\int_{|x|>R}|u|^{p^*_s}dx$ yields
$$\nu_\infty=\lim_{R \to \infty}\limsup_{n \to \infty}\int_{|x|\geq R}|v_n|^{p^*_s}dx.$$

Now, (\ref{C-4}) follows from corresponding inequality for $(v_n).$

\vspace{2mm}

{\bf Step 6}: For $R>1$, we have
\Bea
\limsup_{n \to \infty}\int_{\Rn}|D^su_n|^pdx&=&\limsup_{n \to \infty}\bigg(\int_{\Rn}\psi_R|D^su_n|^p +\int_{\Rn}(1-\psi_R)|D^su_n|^p \bigg)\\
&=&\limsup_{n \to \infty}\bigg(\int_{\Rn}\psi_R|D^su_n|^p \\
&&+\int_{\Rn}(1-\psi_R)d\mu+\int_{\Rn}(1-\psi_R)|D^su|^pdx\bigg).
\Eea
Hence, taking the limit $R \to \infty$ yields   
\Bea
\limsup_{n \to \infty} \int_{\Rn}|D^s u_n|^pdx=\mu_\infty+\|\mu\|+\|u\|_{0,s,p}^p.
\Eea
Proof of (\ref{C-6}) is similar.
\end{proof}

\section{Proof of Theorem \ref{Thm}}

The energy functional associated to $(P_{\theta,\la})$ is given by:
	\begin{equation}\label{I}
	I(u)=\frac{1}{p}\|u\|_{0,s_1,p}^p+\frac{1}{q}\|u\|_{0,s_2,q}^q-\frac{\theta}{r}\Iom V(x)|u|^rdx-\frac{1}{p^*_{s_1}}|u|_{p^*_{s_1}}^{p^*_{s_1}}-\la \Iom F(x,u)dx.
	\end{equation}
	We note that $I(u)=I(-u)$ for all $u \in X_{0,s_1,p}(\Om)$ and $I \in C^1(X_{0,s_1,p},\R).$  
	
 \begin{definition}
 	Let $\va\in C^1(X, \R)$. We say that $\{u_n\}$ is a Palais-Smale sequence (in short, PS sequence)  of $\va$ at level $c$ if $\va(u_n)\to c$ and $\va'(u_n)\to 0$ in $(X)'$, the dual space of $X$. Moreover,  we say that 
 	$\va$ satisfies (PS)$_c$ condition if $\{u_n\}$ is any (PS) sequence in $X$ at level $c$ implies $\{u_n\}$ has a convergent subsequence in $X$.
 \end{definition}

\begin{lemma}\label{lemma3}
Assume (A1)-(A3) are satisfied. Then, there exists $c_1,c_2>0$ such that any (PS)$_c$	
sequence $\{u_n\} \subset X_{0,s_1,p}(\Om)$ of $I$ has a convergent subsequence where 
$$c<\frac{s_1}{N}(S_{s_1,p})^\frac{N}{s_1p}-c_1\theta^{\frac{q}{q-r}}-c_2\la^{\frac{p^*_{s_1}}{p^*_{s_1}-l}}.$$ 
\end{lemma}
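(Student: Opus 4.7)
The plan is the standard concentration--compactness strategy for critical problems, adapted to the $(p,q)$ fractional setting. First I would establish that any PS$_c$ sequence $\{u_n\}$ is bounded in $X_{0,s_1,p}(\Omega)$. Combining $I(u_n)=c+o(1)$ with $\langle I'(u_n),u_n\rangle=o(\|u_n\|_{0,s_1,p})$ and computing
\[
I(u_n)-\tfrac{1}{p^*_{s_1}}\langle I'(u_n),u_n\rangle
=\Big(\tfrac{1}{p}-\tfrac{1}{p^*_{s_1}}\Big)\|u_n\|_{0,s_1,p}^p+\Big(\tfrac{1}{q}-\tfrac{1}{p^*_{s_1}}\Big)\|u_n\|_{0,s_2,q}^q+\theta\Big(\tfrac{1}{p^*_{s_1}}-\tfrac{1}{r}\Big)\!\int_\Omega\! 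V|u_n|^r+\lambda\!\int_\Omega\!\big(\tfrac{1}{p^*_{s_1}}f(x,u_n)u_n-F(x,u_n)\big),
\]
I would use (A3) to control the last integral from below by $-a_3 p^{*\,-1}_{s_1}|u_n|_l^l$, (A1) together with the embedding of Lemma~\ref{Lemma4} to dominate the $\theta$-term by $C\theta\|u_n\|_{0,s_2,q}^r$, and Young's inequality (noting $l<p$, $r<q$) to absorb both perturbations, yielding boundedness for $\theta,\lambda$ in the admissible range. Up to a subsequence $u_n\rightharpoonup u$ in $X_{0,s_1,p}(\Omega)$ and a.e.\ in $\Omega$; standard arguments show $I'(u)=0$.

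Next I would apply Lemma~\ref{Willem-lemma 1.40} with $s=s_1$ to extract measures $\mu,\nu\in\mathcal{M}(\mathbb{R}^N)$ and at most countably many atoms $\{x_j\}\subset\overline{\Omega}$ such that $\nu=\sum_j\nu_j\delta_{x_j}$, $\mu\ge|D^{s_1}u|^p+\sum_j\mu_j\delta_{x_j}$, with $S_{s_1,p}\nu_j^{p/p^*_{s_1}}\le\mu_j$, and analogously the pair $(\mu_\infty,\nu_\infty)$ satisfying $S_{s_1,p}\nu_\infty^{p/p^*_{s_1}}\le\mu_\infty$. The crucial step is to prove that no atom can carry positive mass. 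I would argue by contradiction: assuming $\nu_{j_0}>0$, pick a cutoff $\phi_\eps\in C_c^\infty(\mathbb{R}^N)$ localized near $x_{j_0}$ and test $\langle I'(u_n),u_n\phi_\eps\rangle\to0$. The compact embedding $X_{0,s_1,p}(\Omega)\hookrightarrow\hookrightarrow X_{0,s_2,q}(\Omega)$ (which follows from Lemma~\ref{embed}, Lemma~\ref{Lemma4} together with the standard Rellich theorem for Besov spaces) ensures the $(-\Delta)^{s_2}_q$ contribution is absorbed; the subcritical $V|u_n|^{r}$ and $f(x,u_n)u_n$ pieces vanish by Rellich; sending first $n\to\infty$ and then $\eps\to0$ leaves $\mu_{j_0}\le\nu_{j_0}$. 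Combined with $S_{s_1,p}\nu_{j_0}^{p/p^*_{s_1}}\le\mu_{j_0}$, this forces $\nu_{j_0}\ge S_{s_1,p}^{N/(s_1p)}$. The $\nu_\infty$ case is handled identically with the cutoffs $\psi_R$ from Step 2 of Lemma~\ref{Willem-lemma 1.40}.

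Passing to the limit in $\langle I'(u_n),u_n\rangle\to0$ and subtracting $\langle I'(u),u\rangle=0$ gives $\sum_j\mu_j+\mu_\infty=\sum_j\nu_j+\nu_\infty$. Consequently
\[
c=\lim_n I(u_n)=I(u)+\Big(\tfrac{1}{p}-\tfrac{1}{p^*_{s_1}}\Big)\Big(\textstyle\sum_j\nu_j+\nu_\infty\Big)\ge I(u)+\tfrac{s_1}{N}S_{s_1,p}^{N/(s_1p)}.
\]
It remains to bound $I(u)$ from below. Using $\langle I'(u),u\rangle=0$ as above and (A3), we get
\[
I(u)\ge \Big(\tfrac{1}{p}-\tfrac{1}{p^*_{s_1}}\Big)\|u\|_{0,s_1,p}^p+\Big(\tfrac{1}{q}-\tfrac{1}{p^*_{s_1}}\Big)\|u\|_{0,s_2,q}^q-\theta\Big(\tfrac{1}{r}-\tfrac{1}{p^*_{s_1}}\Big)\!\int_\Omega\! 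V|u|^r-\tfrac{a_3\lambda}{p^*_{s_1}}|u|_l^l.
\]
Applying (A1), H\"older's inequality to write $\int V|u|^r\lesssim\|u\|_{0,s_2,q}^r$, and the Sobolev embedding $|u|_l^l\le|\Omega|^{(p^*_{s_1}-l)/p^*_{s_1}}|u|_{p^*_{s_1}}^l$, followed by Young's inequality with exponents $(q/r,q/(q-r))$ and $(p^*_{s_1}/l,p^*_{s_1}/(p^*_{s_1}-l))$ to absorb the negative pieces into the positive quadratic-type terms, produces $I(u)\ge -c_1\theta^{q/(q-r)}-c_2\lambda^{p^*_{s_1}/(p^*_{s_1}-l)}$. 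Substituting back contradicts the hypothesis on $c$, so no atom survives and $u_n\to u$ in $L^{p^*_{s_1}}(\Omega)$. Finally, combining this strong convergence with the PS condition and the monotonicity/uniform convexity of $(-\Delta)^{s_1}_p$ on $X_{0,s_1,p}(\Omega)$ upgrades the convergence to the norm of $X_{0,s_1,p}(\Omega)$.

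The principal obstacle will be the cutoff calculation in Step 4: ensuring that the cross term coming from $(-\Delta)^{s_1}_p u_n$ tested against $u_n\phi_\eps$ produces only $\mu_{j_0}$ in the limit (without spurious contributions from the nonlocal tail), and simultaneously that the $(-\Delta)^{s_2}_q$ piece is negligible via the compact embedding Lemma~\ref{Lemma4}. This requires the $p$-version of the localized fractional Leibniz-type estimates used in the proof of Lemma~\ref{Willem-lemma 1.40}, applied carefully so that constants depending on $\eps$ are controlled uniformly in $n$ before letting $\eps\to0$.
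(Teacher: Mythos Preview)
Your approach is sound but follows a more elaborate route than the paper. For boundedness you use the direct combination $I(u_n)-\tfrac{1}{p^*_{s_1}}\langle I'(u_n),u_n\rangle$ together with Young's inequality, whereas the paper argues by contradiction, normalizing $\hat u_n=u_n/\|u_n\|_{0,s_1,p}$ and showing that the leading coefficient is forced to vanish; both work. The real divergence is in the strong-convergence step. You deploy the full concentration--compactness machinery of Lemma~\ref{Willem-lemma 1.40} (atoms, localized cutoffs, the quantization $\nu_{j_0}\ge S_{s_1,p}^{N/(s_1p)}$), which is correct but also leans on the compact embedding $X_{0,s_1,p}(\Omega)\hookrightarrow\hookrightarrow X_{0,s_2,q}(\Omega)$ to discard the $q$-contribution in the cutoff test. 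The paper never invokes Lemma~\ref{Willem-lemma 1.40} here; it uses only the Brezis--Lieb lemma: with $v_n=u_n-u$ one obtains $\|v_n\|_{0,s_1,p}^p+\|v_n\|_{0,s_2,q}^q=|v_n|_{p^*_{s_1}}^{p^*_{s_1}}+o(1)$ from $\langle I'(u_n),u_n\rangle-\langle I'(u),u\rangle=o(1)$, and the single Sobolev dichotomy $a=0$ or $a\ge S_{s_1,p}^{N/(s_1p)}$ (where $a=\lim\|v_n\|_{0,s_1,p}^p$) replaces the whole atom analysis. This is shorter and does not require the compact embedding into $X_{0,s_2,q}(\Omega)$.

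One small point in your lower bound on $I(u)$: the combination $I(u)-\tfrac{1}{p^*_{s_1}}\langle I'(u),u\rangle$ eliminates the critical term, so there is no positive $|u|_{p^*_{s_1}}^{p^*_{s_1}}$ piece against which to run Young with the exponents $(p^*_{s_1}/l,\,p^*_{s_1}/(p^*_{s_1}-l))$ as you claim; absorbing $|u|_l^l$ into $\|u\|_{0,s_1,p}^p$ would instead give the exponent $p/(p-l)$. The paper avoids this by substituting $\|u\|_{0,s_1,p}^p$ from $\langle I'(u),u\rangle=0$ into the energy identity so as to keep a term $\tfrac{s_1}{N}|u|_{p^*_{s_1}}^{p^*_{s_1}}$ on the right-hand side, and it is this retained term that produces the stated exponent $\lambda^{p^*_{s_1}/(p^*_{s_1}-l)}$.
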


\begin{proof}
Let $\{u_n\} \subset X_{0,s_1,p}(\Om)$ be a (PS)$_c$ sequence of $I$. Therefore,
\begin{equation}\label{I'}
I(u_n)=c+o(1),\,\,I'(u_n)=o(1).
\end{equation}

\vspace{2mm}

{\bf Claim 1}: $\|u_n\|_{0,s_1,p}$ is uniformly bounded.\\

We prove the Claim by method of contradiction. Thus assume  the claim does not hold, that is, up to a subsequence $\|u_n\|_{0,s_1,p} \to \infty$ as 
$n \to \infty.$ Let us define $\hat{u_n}:=\frac{u_n}{\|u_n\|_{0,s_1,p}}.$ Then $\|\hat{u_n}\|_{0,s_1,p}=1.$
Therefore, up to a subsequence, we may take
\begin{equation}\label{hat-u-n}
\hat{u_n} \deb \hat{u} \quad\mbox{in}\quad X_{0,s_1,p}, \quad\text{and}\quad
\hat{u_n} \to \hat{u} \quad\mbox{in}\quad L^{\ga}(\Rn),\, 1 \leq \ga<p^*_{s_1}
\end{equation}
for some $\hat{u} \in X_{0,s_1,p}(\Om).$
From (\ref{I'}) using $\frac{1}{||u_n||_{0,s_1,p}}=o(1)$, we have
\begin{align}\label{+1}
&\frac{1}{p}\|\hat{u_n}\|_{0,s_1,p}^p+\frac{1}{q}\|u_n\|_{0,s_1,p}^{q-p}\|\hat{u_n}\|_{0,s_2,q}^q
-\frac{\theta}{r}\|u_n\|_{0,s_1,p}^{r-p}\Iom V(x)|\hat{u_n}|^rdx\no\\
&-\,\,\frac{1}{p^*_{s_1}}\|u_n\|_{0,s_1,p}^{p^*_{s_1}-p}|\hat{u_n}|_{p^*_{s_1}}^{p^*_{s_1}}-\la\|u_n\|_{0,s_1,p}^{-p}\Iom F(x,u_n)dx\no\\
&= o(1),
\end{align}
and
\begin{align}\label{+2}
\|\hat{u_n}\|_{0,s_1,p}^p+\|u_n\|_{0,s_1,p}^{q-p}\|\hat{u_n}\|_{0,s_2,q}^q-&\theta\|u_n\|_{0,s_1,p}^{r-p}\Iom V(x)|\hat{u_n}|^rdx\no\\
-&\|u_n\|_{0,s_1,p}^{p^*_{s_1}-p}|\hat{u_n}|_{p^*_{s_1}}^{p^*_{s_1}}-\la\|u_n\|_{0,s_1,p}^{-p}\Iom f(x,u_n)u_ndx
= o(1).
\end{align}
As $V \in L^{\infty}(\Om)$, using (\ref{hat-u-n}) we have
\begin{equation}\label{+3}
\Iom V(x)|\hat{u_n}|^r dx \to \Iom V(x)|\hat{u}|^r dx.
\end{equation}
From (\ref{+1}) and (\ref{+2}), we obtain
\begin{align}\label{+4}
(\frac{p^*_{s_1}}{p}-1)\|\hat{u_n}\|_{0,s_1,p}^p+&(\frac{p^*_{s_1}}{q}-1)\|u_n\|_{0,s_1,p}^{q-p}\|\hat{u_n}\|_{0,s_2,q}^q-
\theta(\frac{p^*_{s_1}}{r}-1)\|u_n\|_{0,s_1,p}^{r-p}\Iom V(x)|\hat{u_n}|^rdx\no\\
-&\la\|u_n\|_{0,s_1,p}^{-p}\bigg(p^*_{s_1}\Iom [F(x,u_n)-f(x,u_n)u_n]dx \bigg)=o(1).
\end{align}
Using (A3), (\ref{hat-u-n}) and (\ref{+3}), we can write
\Bea
\bigg(\frac{p^*_{s_1}}{p}-1\bigg)\|\hat{u_n}\|_{0,s_1,p}^p&=&\bigg(1-\frac{p^*_{s_1}}{p}\bigg)\|u_n\|_{0,s_1,p}^{q-p}\|\hat{u_n}\|_{0,s_2,q}^q \\
&+&\theta\bigg(\frac{p^*_{s_1}}{r}-1\bigg)\|u_n\|_{0,s_1,p}^{r-p}\Iom V(x)|\hat{u_n}|^r dx\\
&&+ \la\|u_n\|_{0,s_1,p}^{-p}\bigg(\Iom p^*_{s_1} F(x,u_n)-f(x,u_n)u_ndx \bigg)+o(1)\\
&\leq& \bigg(1-\frac{p^*_{s_1}}{p}\bigg)\|u_n\|_{0,s_1,p}^{q-p}\|\hat{u_n}\|_{0,s_2,q}^q\\
&+&\theta\bigg(\frac{p^*_{s_1}}{r}-1\bigg)\|u_n\|_{0,s_1,p}^{r-p}\Iom V(x)|\hat{u}|^rdx\\
&&+ \la a_3\|u_n\|_{0,s_1,p}^{l-p}|\hat{u}|_l^l+o(1)\\
&=&o(1),
\Eea
as $n \to \infty.$ This is a contradiction as $\|\hat{u_n}\|_{0,s_1,p}=1$ and hence  Claim 1 follows.

\vspace{2mm}

Consequently,  there exists $u \in X_{0,s_1,p}(\Om)$ such that up to a subsequence
\begin{align*}
&u_n \deb u\quad\mbox{in}\quad X_{0,s_1,p}(\Om),\\
&u_n \to u\quad\mbox{a.e. in}\quad \Rn,\\
&u_n \to u\quad\mbox{strongly in}\quad L^\gamma(\Rn)\quad \mbox{for}\quad 1 \leq \gamma <p^*_{s_1}.
\end{align*}
Applying (A1) and (A2), we have
$$\Iom f(x,u_n)u_ndx=\Iom f(x,u)u dx +o(1),$$
$$\Iom F(x,u_n)dx=\Iom F(x,u)dx+o(1),$$ and
$$\Iom V(x)|u_n|^rdx=\Iom V(x)|u|^rdx+o(1).$$
Note that by Lemma \ref{Lemma4},  $\|u_n\|_{0,s_2,q}$ is also bounded. Since $u_n \to u$ a.e. in $\Rn$, we obtain
$$\frac{|u_n(x)-u_n(y)|^{p-2}(u_n(x)-u_n(y))}{|x-y|^{(\frac{N}{p}+s_1)(p-1)}} \to 
\frac{|u(x)-u(y)|^{p-2}(u(x)-u(y))}{|x-y|^{(\frac{N}{p}+s_1)(p-1)}}$$
a.e. $(x,y) \in \Rn \times \Rn$. On the other hand, $\|u_n\|_{0,s_1,p}$ is uniformly bounded implies there exists $C>0$ such that
$$\int_{\R^{2N}} \bigg(\frac{|u_n(x)-u_n(y)|}{|x-y|^{\frac{N}{p}+s_1}}\bigg)^p dxdy \leq C \quad\mbox{for all}\quad n \geq 1,$$
that is,
$$\int_{\R^{2N}} \bigg|\frac{|u_n(x)-u_n(y)|^{p-2}(u_n(x)-u_n(y))}{|x-y|^{(\frac{N}{p}+s_1)(p-1)}}\bigg|^{\frac{p}{p-1}}dxdy \leq C.$$
Therefore, 
$$\frac{|u_n(x)-u_n(y)|^{p-2}(u_n(x)-u_n(y))}{|x-y|^{(\frac{N}{p}+s_1)(p-1)}} \deb \frac{|u(x)-u(y)|^{p-2}(u(x)-u(y))}{|x-y|^{(\frac{N}{p}+s_1)(p-1)}}$$
weakly in $L^{p'}(\Rn \times \Rn)$ with $p'=\frac{p}{p-1}.$ 
Similarly, as $\|u_n\|_{0,s_2,q}$ is uniformly bounded, 
$$\frac{|u_n(x)-u_n(y)|^{q-2}(u_n(x)-u_n(y))}{|x-y|^{(\frac{N}{q}+s_2)(q-1)}} \deb \frac{|u(x)-u(y)|^{q-2}(u(x)-u(y))}{|x-y|^{(\frac{N}{q}+s_2)(q-1)}}$$
weakly in $L^{q'}(\Rn \times \Rn)$ with $q'=\frac{q}{q-1}.$ 
If $\phi\in X_{0,s_1,p}(\Om),$  it follows $\frac{\phi(x)-\phi(y)}{|x-y|^{\frac{N}{p}+s_1}} \in L^p(\Rn \times \Rn)$ and
$\frac{\phi(x)-\phi(y)}{|x-y|^{\frac{N}{q}+s_2}} \in L^q(\Rn \times \Rn).$
As a result,
\Bea&&\int_{\R^{2N}}\frac{|u_n(x)-u_n(y)|^{p-2}(u_n(x)-u_n(y))(\phi(x)-\phi(y))}{|x-y|^{(\frac{N}{p}+s_1)(p-1)}|x-y|^{\frac{N}{p}+s_1}}dxdy\\
 &&\To \int_{\R^{2N}}\frac{|u(x)-u(y)|^{p-2}(u(x)-u(y))(\phi(x)-\phi(y))}{|x-y|^{N+s_1p}}dxdy \Eea
and 
\Bea&&\int_{\R^{2N}}\frac{|u_n(x)-u_n(y)|^{q-2}(u_n(x)-u_n(y))(\phi(x)-\phi(y))}{|x-y|^{(\frac{N}{q}+s_2)(q-1)}|x-y|^{\frac{N}{q}+s_2}}dxdy\\
&& \To \int_{\R^{2N}}\frac{|u(x)-u(y)|^{q-2}(u(x)-u(y))(\phi(x)-\phi(y)) }{|x-y|^{N+s_2q}}dxdy. \Eea
These together with (\ref{I'}) via Vitali's convergence theorem implies $I'(u)=0$ that is $u$ is weak solution of $(P_{\theta,\la})$. 

\vspace{2mm}

{\bf Claim 2}: $u_n \to u$ in $X_{0,s_1,p}(\Om).$ 

\vspace{2mm}

To prove this claim, define $v_n :=u_n-u.$ As $\|u_n\|_{0,s_1,p}$ and $\|u_n\|_{0,s_2,q}$
are uniformly bounded and $u_n \to u$ a.e. in $\Rn,$ applying Brezis-Lieb lemma,
 we obtain
\Bea
\int_{\R^{2N}}\frac{|u_n(x)-u_n(y)|^p}{|x-y|^{N+s_1p}}dxdy=\int_{\R^{2N}}\frac{|v_n(x)-v_n(y)|^p}{|x-y|^{N+s_1p}}dxdy
+\int_{\R^{2N}}\frac{|u(x)-u(y)|^p}{|x-y|^{N+s_1p}}dxdy+o(1),
\Eea
i.e., $\|u_n\|_{0,s_1,p}^p=\|v_n\|_{0,s_1,p}^p+\|u\|_{0,s_1,p}^p+o(1).$\\
Similarly, we have
$\|u_n\|_{0,s_2,q}^q=\|v_n\|_{0,s_2,q}^q+\|u\|_{0,s_2,q}^q+o(1).$
Therefore, a straight forward computation yields
\bea\label{+5}
c+o(1) &=&\frac{1}{p}\|v_n\|_{0,s_1,p}^p+\frac{1}{q}\|v_n\|_{0,s_2,q}^q-\frac{\theta}{r}\Iom V(x)|u|^rdx-\frac{1}{p^*}|v_n|_{p^*_{s_1}}^{p^*_{s_1}}\no\\
&&-\la\Iom F(x,u)dx
+\frac{1}{p}\|u\|_{0,s_1,p}^p+\frac{1}{q}\|u\|_{0,s_2,q}^q-\frac{1}{p^*}|u|_{p^*_{s_1}}^{p^*_{s_1}}.
\eea
On the other hand, using $|I'(u_n)u_n|\leq o(1)\|u_n\|_{0,s_1,p}=o(1)$,  we also have
\bea\label{+6}
\|v_n\|_{0,s_1,p}^p+\|v_n\|_{0,s_2,q}^q &=&o(1)+\theta\Iom V(x)|u|^rdx+|u|_{p^*_{s_1}}^{p^*_{s_1}}+|v_n|_{p^*_{s_1}}^{p^*_{s_1}}\no\\
&&+\la\Iom f(x,u)udx-\|u\|_{0,s_1,p}^p-||u||_{0,s_2,q}^q.
\eea
Combining (\ref{+6}) with $I'(u)=0$ yields
\begin{equation}\label{+7}
\|v_n\|_{0,s_1,p}^p+\|v_n\|_{0,s_2,q}^q-|v_n|_{p^*_{s_1}}^{p^*_{s_1}}=o(1).
\end{equation}
Since $\|v_n\|_{0,s_1,p},\, \|v_n\|_{0,s_2,q},\, |v_n|_{p^*_{s_1}}$ all are bounded sequence of real numbers, we may assume that:
\be\lab{25-11-17-1}\|v_n\|_{0,s_1,p}^p=a+o(1),\quad \|v_n\|_{0,s_2,q}^q=b+o(1),\quad |v_n|_{p^*_{s_1}}^{p^*_{s_1}}=d+o(1)\ee for some 
$a,b,d \geq 0.$ Hence, (\ref{+7}) implies
\begin{equation}\label{+8}
a+b=d.
\end{equation}
Thus $a\leq d$. Therefore, Sobolev inequality yields
\begin{equation}\label{+9}
a \geq S_{s_1, p} d^{p/{p^*_{s_1}}} \geq S_{s_1, p} a^{p/{p^*_{s_1}}}
\end{equation}
If $a=0,$ we are done.
If $a>0,$ then (\ref{+9}) implies
\begin{equation}\label{+10}
a \geq (S_{s_1, p})^\frac{N}{s_1p}.
\end{equation}
Using \eqref{+5}, \eqref{25-11-17-1}, \eqref{+8}, \eqref{+10} and the fact that $q<p<p^*_{s_1}$, taking the limit $n\to\infty$ we have
\bea\lab{25-11-17-2}
c&=&\frac{a}{p}+\frac{b}{q}-\frac{(a+b)}{p^*_{s_1}}+\frac{1}{p}\|u\|_{0,s_1,p}^p+\frac{1}{q}\|u\|_{0,s_2,q}^q-\frac{1}{p^*_{s_1}}|u|_{p^*_{s_1}}^{p^*_{s_1}}\no\\
&&-\frac{\theta}{r}\Iom V(x)|u|^r dx-\la \Iom F(x,u)dx.\no\\
&\geq&\frac{as_1}{N}+\frac{1}{p}\|u\|_{0,s_1,p}^p+\frac{1}{q}\|u\|_{0,s_2,q}^q-\frac{1}{p^*_{s_1}}|u|_{p^*_{s_1}}^{p^*_{s_1}}-\frac{\theta}{r}\Iom V(x)|u|^r dx-\la \Iom F(x,u)dx.\no\\
&\geq& \frac{s_1}{N}(S_{s_1, p})^\frac{N}{s_1p} +\frac{1}{p}\|u\|_{0,s_1,p}^p+\frac{1}{q}\|u\|_{0,s_2,q}^q-\frac{1}{p^*_{s_1}}|u|_{p^*_{s_1}}^{p^*_{s_1}}-\frac{\theta}{r}\Iom V(x)|u|^r dx-\la \Iom F(x,u)dx.\no\\
\,\,
\eea
Also from $<I'(u), u>=0$, it follows
\be\lab{25-11-17-3}
\|u\|_{0,s_1,p}^p=-\|u\|_{0,s_2,q}^q+|u|_{p^*_{s_1}}^{p^*_{s_1}}+\theta\Iom V(x)|u|^r\,dx+\la\Iom f(x,u)u\, dx.
\ee
Substituting \eqref{25-11-17-3} into \eqref{25-11-17-2} and using (A1) yields 
\bea\lab{25-11-17-4}
c&\geq& \frac{s_1}{N}(S_{s_1, p})^\frac{N}{s_1p}+\frac{s_1}{N}|u|_{p^*_{s_1}}^{p^*_{s_1}}-\theta\bigg(\frac{1}{r}-\frac{1}{p}\bigg)\Iom V(x)|u|^r dx\no\\
&&-\la\Iom (F(x,u)-\frac{1}{p}f(x,u)u)dx+\bigg(\frac{1}{q}-\frac{1}{p}\bigg)\|u\|_{0,s_2,q}^q\no\\
&\geq& \frac{s_1}{N}(S_{s_1, p})^\frac{N}{s_1p}+\frac{s_1}{N}|u|_{p^*_{s_1}}^{p^*_{s_1}}-\theta \eta\bigg(\frac{1}{r}-\frac{1}{p}\bigg)\|u\|_{0,s_2,r}^r\no\\
&&-\la\Iom (F(x,u)-\frac{1}{p}f(x,u)u)dx+\bigg(\frac{1}{q}-\frac{1}{p}\bigg)\|u\|_{0,s_2,q}^q.
\eea
Note that from (A4) it is easy to see $f(x,t)t\geq 0$ for all $t\in\R$, $x\in\Om$ and from (A3), it follows that $F(x,t)\leq \frac{1}{p^*_{s_1}}f(x,t)t+\frac{a_3}{p^*_{s_1}}|t|^l$. Thus, 
\be\lab{25-11-17-5}
\Iom \la\big(F(x,u)-\frac{1}{p}f(x,u)u\big)dx\leq \frac{\la a_3}{p}|u|_l^l\leq\frac{\la a_3}{p}|\Om|^{1-\frac{l}{p^*_{s_1}}}|u|_{p^*_{s_1}}^l =\la c_0|u|_{p^*_{s_1}}^l,
\ee
where $c_0=\frac{a_3}{p}|\Om|^{1-\frac{l}{p^*_{s_1}}}$. Applying Lemma \ref{Lemma4} and Young's inequality, for any $\de>0$ we  obtain 
\be\lab{25-11-17-6}
\eta\bigg(\frac{1}{r}-\frac{1}{p}\bigg)\|u\|_{0,s_2,r}^r\leq \eta\bigg(\frac{1}{r}-\frac{1}{p}\bigg)C^r\|u\|_{0,s_2,q}^r\leq \de\|u\|_{0,s_2,q}^q+C_{\de}.
\ee
Substituting \eqref{25-11-17-5} and \eqref{25-11-17-6} into \eqref{25-11-17-4} we have
\be\lab{25-11-17-7}
c\geq \frac{s_1}{N}(S_{s_1, p})^\frac{N}{s_1p}+\frac{s_1}{N}|u|_{p^*_{s_1}}^{p^*_{s_1}}-\theta \de\|u\|_{0,s_2,q}^q-\theta C_\de-\la c_0|u|_{p^*_{s_1}}^l+\bigg(\frac{1}{q}-\frac{1}{p}\bigg)\|u\|_{0,s_2,q}^q.\ee
Now choose $\de=\frac{1}{\theta}\big(\frac{1}{q}-\frac{1}{p}\big).$ This implies $C_{\de}=c_1\theta^\frac{r}{q-r}$, for some $c_1=c_1(p,q,r,N,s_1, s_2,|\Om|)>0$. Substituting this in \eqref{25-11-17-7} yields
\Bea
c&\geq&\frac{s_1}{N}(S_{s_1, p})^\frac{N}{s_1p}+\frac{s_1}{N}|u|_{p^*_{s_1}}^{p^*_{s_1}}-c_1\theta^{\frac{q}{q-r}}-\la c_0|u|_{p^*_{s_1}}^l.
\Eea
Note that the constants $c_1$ and $c_0$ are independent of $\theta,\la.$ 
Let us consider the function $g:(0,\infty) \to \R$ by
$$g(x)=\frac{s_1}{N}x^{p^*_{s_1}}-\la c_0 x^l.$$ We note that $g$ attains its minimum at $x_0=(\frac{c_0lN\la}{p^*_{s_1}})^{\frac{1}{p^*_{s_1}-l}}.$ Therefore, $$g(x)\geq g(x_0)=-c_2\la^{\frac{p^*_{s_1}}{p^*_{s_1}-l}},$$ where $c_2=c_0 \frac{p^*_{s_1}-l}{p^*_{s_1}}(\frac{c_0lN}{p^*_{s_1}})^{\frac{l}{p^*_{s_1}-l}}>0.$ Consequently,
$$c \geq \frac{s_1}{N}(S_{s_1, p})^\frac{N}{s_1p}-c_1\theta^{\frac{q}{q-r}}-c_2\la^{\frac{p^*_{s_1}}{p^*_{s_1}-l}},$$ which is a contradiction to the assumption on $c$. Hence, $a=0$ and this completes the proof of the lemma.

\end{proof}

Using (A1) and Lemma \ref{Lemma4}, for $1<r<p$


\be\lab{+11}
\Iom V(x)|u|^rdx\leq \eta\|u\|_{0,s_2,r}^r\leq C\eta\|u\|_{0,s_1,p}^r.
\ee

Moreover, by Sobolev embedding we have $S_{s_1, p}|u|_{p^*_{s_1}}^p \leq ||u||_{0,s_1,p}^p$ and using (A2), we obtain
\Bea
\Iom F(x,u)dx &\leq& \frac{a_1}{\al}|u|_{\al}^{\al} +\frac{a_2}{\ba}|u|_{\ba}^{\ba}\\
&\leq& \frac{a_1}{\al}|\Om|^{1-\frac{\al}{p^*_{s_1}}}|u|_{p^*_{s_1}}^{\al}+\frac{a_2}{\ba}|\Om|^{1-\frac{\ba}{p^*_{s_1}}}|u|_{p^*_{s_1}}^{\ba}\\
&\leq& 
\frac{a_1}{\al}|\Om|^{1-\frac{\al}{p^*_{s_1}}}(S_{s_1, p})^{-\al/p}\|u\|_{0,s_1,p}^{\al}+\frac{a_2}{\ba}|\Om|^{1-\frac{\ba}{p^*_{s_1}}}(S_{s_1, p})^{-\ba/p}\|u\|_{0,s_1,p}^{\ba}.\\
\Eea
This together with (\ref{+11}) and Sobolev embedding gives:
\bea\label{+12}
I(u)&\geq& \frac{1}{p}\|u\|_{0,s_1,p}^p-\frac{(S_{s_1, p})^{-{p^*_{s_1}}/p}}{p^*_{s_1}}\|u\|_{0,s_1,p}^{p^*_{s_1}}-\frac{\eta C\theta}{r}
\|u\|_{0,s_1,p}^r\no\\
&&- \la\frac{a_1}{\al}|\Om|^{\frac{p^*_{s_1}-\al}{p^*_{s_1}}}(S_{s_1, p})^{-\al/p}\|u\|_{0,s_1,p}^{\al}-\la \frac{a_2}{\ba}|\Om|^{\frac{p^*_{s_1}-\ba}{p^*_{s_1}}}(S_{s_1, p})^{-\ba/p}\|u\|_{0,s_1,p}^{\ba} \no\\
&=& c_3 \|u\|_{0,s_1,p}^p-c_4\|u\|_{0,s_1,p}^{p^*_{s_1}}-c_5\theta\|u\|_{0,s_1,p}^r-c_6\la\|u\|_{0,s_1,p}^{\al}
-c_7\la\|u\|_{0,s_1,p}^{\ba},
\eea 
where $$c_3=\frac{1}{p},\, c_4=\frac{(S_{s_1, p})^{-{p^*_{s_1}}/p}}{p^*_{s_1}},\, c_5=\frac{\eta}{r}C
,\, c_6=\frac{a_1}{\al}|\Om|^{\frac{p^*_{s_1}-\al}{p^*_{s_1}}}(S_{s_1, p})^{-\al/p},\, c_7=\frac{a_2}{\ba}|\Om|^{\frac{p^*_{s_1}-\ba}{p^*_{s_1}}}(S_{s_1, p})^{-\ba/p}$$ 
are all positive constants. 
Let us define a function $h:(0,\infty) \to \R$ by
\begin{equation}\label{h}
h(x)=c_3x^p-c_4x^{p^*_{s_1}}-c_5\theta x^r-c_6\la x^{\al}-c_7 \la x^{\ba}.
\end{equation}
As $1<r<p$ and $1<\al,\, \ba<p^*_{s_1},$ we see that there exists $\la_0 \geq \la^*>0$ such that for any $\la \in (0,\la^*),$ there exists $x>0$ such that $h(x)>0.$ Therefore, we conclude that for any $\la \in (0,\la^*),$ there exists
\begin{equation}\label{theta-*}
\theta^*=\theta^*(\la)>0
\end{equation}  such that for any $\theta \in (0,\theta^*),$
\begin{itemize}
	\item [(a)]
	$h(x)$ attains its maximum and $\max_{x \in (0,\infty)}h(x)>0$,
	\item [(b)]
	$\frac{s_1}{N}S^\frac{N}{s_1p}-c_1 \theta^{\frac{q}{q-r}}-c_2 \la^{\frac{p^*_{s_1}}{p^*_{s_1}-l}}>0$,
	\end{itemize}
	 where $c_1,c_2$ are given in lemma \ref{lemma3}. From the definition of $h$, it is not difficult to see that $h$ has finitely many positive roots, say $0<r_1<r_2<\cdots<r_m<\infty$, where $h(r_i)=0$.

As a result, we note that,
\begin{equation}\label{h-0}
h(x) \begin{cases}<0 \,\,\ \forall\, x \in (0,r_1)\cup(r_2,r_3)\cup \cdots \cup(r_m,\infty), \\
>0 \,\,\ \forall\, x \in (r_1,r_2)\cup(r_3,r_4)\cup \dots \cup (r_{m-1},r_m).
\end{cases}
\end{equation}
Denote,
$$A:=(0,r_1)\cup(r_2,r_3)\cup \dots \cup(r_m,\infty), \quad B:=A \setminus (r_m,\infty).$$ We choose $\tau \in C^{\infty}(\R^+;[0,1])$ such that
\begin{equation}\lab{27-11-17-1'}
\tau(x)=\begin{cases}
        1,\,\ x \in B,\\
        0,\,\ x \in (r_m,\infty).
        \end{cases}
\end{equation}
Set $\phi(u):=\tau(\|u\|_{0,s_1,p})$ and the truncated functional 
\begin{equation}\label{+13}
I_\infty(u)=\frac{1}{p}\|u\|_{0,s_1,p}^p+\frac{1}{q}\|u\|_{0,s_2,q}^q-\frac{\theta}{r}\Iom V(x)|u|^rdx-\frac{1}{p^*_{s_1}}\Iom |u|^{p^*_{s_1}}\phi(u)dx \\
-\la \Iom F(x,u)\phi(u)dx.
\end{equation}
Similarly, as (\ref{h}) we can consider the function $\bar{h}:(0,\infty) \to \R$ as
\begin{equation}\label{+14}
\bar{h}(x)=c_3x^p-c_4x^{p^*_{s_1}}\tau(x)-c_5\theta x^r-c_6 \la x^{\al}\tau(x)-c_7\la x^{\ba}\tau(x),\,\,\ \forall\, x>0
\end{equation}
and have
\begin{equation}\label{+15}
I_\infty(u) \geq \bar{h}(\|u\|_{0,s_1,p}).
\end{equation}

It is not difficult to check that from the definition of $\tau,\, A,\, B$ that 
\be\lab{26-11-17-1} \bar{h}(x) \geq h(x) \,\, \forall\, x>0,\quad
 \bar{h}(x)=h(x) \,\,\forall\, x \in B,\quad \bar{h}(x) \geq 0 \,\, \forall\, x>r_m.\ee 
 
 Therefore, we conclude 
\be\lab{26-11-17-3}I(u)=I_\infty(u) \quad\text{for}\quad \|u\|_{0,s_1,p} \in B.\ee
Also we note that  $\tau \in C^{\infty}(\R^+,[0,1])$ implies $I_\infty(u) \in C^1(X_{0,s_1,p},\R).$ 

 \begin{lemma}\label{Lemma5}
 (i) Let $I_\infty(u)<0.$ Then $\|u\|_{0,s_1,p} \in B$ and there exists a neighbourhood $\mathcal{N}_u$ of $u$ such that $I(v)=I_\infty(v) \,\,\ \forall\, v \in \mathcal{N}_u.$ 

 (ii) For any $\la \in (0,\la^*),$ there exists $\theta^*>0$ such that for any $\theta \in (0,\theta^*),$ $I_\infty(u)$ satisfies (PS)$_c$ condition for $c<0.$ 
 \end{lemma}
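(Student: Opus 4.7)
The plan is to combine the pointwise bound \eqref{+15} with the sign data collected in \eqref{26-11-17-1}. Setting $x := \|u\|_{0,s_1,p}$, the hypothesis $I_\infty(u) < 0$ forces $\bar{h}(x) < 0$, so I would argue by contradiction: on the intervals where $h(x) \geq 0$, the first clause of \eqref{26-11-17-1} yields $\bar{h}(x) \geq h(x) \geq 0$, while on the tail $x > r_m$ the third clause of \eqref{26-11-17-1} gives $\bar{h}(x) \geq 0$ directly. Hence the only remaining possibility is $x \in B$. Since $B$ is open in $(0,\infty)$ and the map $v \mapsto \|v\|_{0,s_1,p}$ is continuous, the preimage $\mathcal{N}_u := \{v \in X_{0,s_1,p}(\Omega) : \|v\|_{0,s_1,p} \in B\}$ is an open neighborhood of $u$ on which $\phi(v) = \tau(\|v\|_{0,s_1,p}) \equiv 1$, so by \eqref{26-11-17-3} we have $I \equiv I_\infty$ on $\mathcal{N}_u$.

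\textbf{Part (ii).} Let $\{u_n\} \subset X_{0,s_1,p}(\Omega)$ be a $(PS)_c$ sequence for $I_\infty$ at a level $c < 0$. Eventually $I_\infty(u_n) < c/2 < 0$, so part (i) forces $\|u_n\|_{0,s_1,p} \in B$; since $B$ is a bounded subset of $(0,\infty)$, the sequence $\{u_n\}$ is automatically bounded in $X_{0,s_1,p}(\Omega)$. The neighborhoods $\mathcal{N}_{u_n}$ supplied by part (i) are open sets on which $I$ and $I_\infty$ coincide as $C^1$ functionals, so $I(u_n) = I_\infty(u_n) \to c$ and $I'(u_n) = I'_\infty(u_n) \to 0$, making $\{u_n\}$ a $(PS)_c$ sequence for the original functional $I$ at the same negative level.

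To conclude I would invoke Lemma \ref{lemma3}: the parameters $\lambda^*$ and $\theta^* = \theta^*(\lambda)$ were selected via condition (b) following \eqref{h} precisely so that the threshold $\frac{s_1}{N}(S_{s_1,p})^{N/(s_1 p)} - c_1 \theta^{q/(q-r)} - c_2 \lambda^{p^*_{s_1}/(p^*_{s_1} - l)}$ is strictly positive for every admissible pair $(\lambda,\theta)$; any $c < 0$ therefore trivially lies below this threshold, and Lemma \ref{lemma3} extracts a subsequence of $\{u_n\}$ converging in $X_{0,s_1,p}(\Omega)$. The main subtlety of the argument is the coupling between the sign constraint and the truncation $\phi$: it is precisely the negativity of $I_\infty$ that confines the norm to $B$, on which $\phi$ is identically $1$, after which the $(PS)_c$ analysis reduces to that of the untruncated functional $I$ already handled by Lemma \ref{lemma3}.
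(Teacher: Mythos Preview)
Your proposal is correct and follows essentially the same route as the paper: for part (i) you use $I_\infty(u)\geq \bar h(\|u\|_{0,s_1,p})$ together with the sign information in \eqref{26-11-17-1} to rule out $\|u\|_{0,s_1,p}\in(\R^+\setminus A)\cup(r_m,\infty)$, and then use openness of $B$ to transfer $I=I_\infty$ to a full neighborhood; for part (ii) you pass from a $(PS)_c$ sequence for $I_\infty$ to one for $I$ via part (i) and finish with Lemma~\ref{lemma3} under condition (b). The only cosmetic differences are that you phrase the case split as ``$h\geq 0$'' versus ``tail'' rather than the paper's $\R^+\setminus A$ versus $A\setminus B$, and you record the (unneeded) boundedness of $B$ explicitly; neither changes the substance.
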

 \begin{proof}
 We prove (i) by method of contradiction. Suppose $\|u\|_{0,s_1,p} \notin B,$ that is, $\|u\|_{0,s_1,p} \in \R^+ \setminus B$ for $u$ with $I_\infty(u)<\infty.$
 Now, two cases may happen.\\
\textit{Case 1}: If $\|u\|_{0,s_1,p} \in \R^+ \setminus A,$ then using 
(\ref{+15}), \eqref{26-11-17-1} and (\ref{h-0}),  we have
$$I_\infty(u) \geq \bar{h}(\|u\|_{0,s_1,p})\geq h(\|u\|_{0,s_1,p})> 0.$$ This contradicts $I_{\infty}(u)<0$.

\textit{Case 2}: If $\|u\|_{0,s_1,p} \in (r_m,\infty)=A \setminus B$. Then by (\ref{+15}) and \eqref{26-11-17-1}, we have
$I_\infty(u) \geq \bar{h}(\|u\|_{0,s_1,p}) \geq 0,$ which again contradicts $I_{\infty}(u)<0$. Hence, $\|u\|_{0,s_1,p}\in B$. Moreover as $B$ is an open set, applying \eqref{26-11-17-3}, we obtain there exists a neighborhood $\mathcal{N}_u$ of $u$ such that $I(v)=I_\infty(v) \,\,\ \forall\, v \in \mathcal{N}_u.$ 

To prove (ii), let $\theta^*>0$ be as in (\ref{theta-*}). Suppose $c<0$ and $\{u_n\} \subseteq X_{0,s_1,p}(\Om)$ is a (PS)$_c$ sequence of $I_\infty.$ Therefore, for $n$ large we may take 
$$I_\infty(u_n)<0 \quad\mbox{and}\quad I'_\infty(u_n)=o(1).$$
Using (i) it follows that $\|u_n\|_{0,s_1,p} \in B.$ Therefore, $I(u_n)=I_\infty(u_n)$ and $I'(u_n)={I'}_\infty(u_n)=o(1).$ 
Since (b) holds for $\theta \in (0,\theta^*),$ applying lemma \ref{lemma3}, we obtain $I(u)$ satisfies (PS)$_c$ condition for $c<0.$ Therefore, $I_\infty(u)$ satisfies (PS)$_c$ condition for $c<0$.
 \end{proof}
 
 \vspace{2mm}
 
Define, 
\be\lab{26-11-17-7}\Sigma:=\{A \subset X_{0,s_1,p} \setminus\{0\}: A \quad\mbox{is closed},\quad A=-A\}.\ee

\begin{definition}
Let $A\in\Sigma$. We denote by $\ga(A)$ the genus of $A$ which is the smallest positive integer $n$ such that there exists an odd continuous map from $A$ into $\R^n\setminus\{0\}$. We set $\ga(\emptyset)=0$ and if no such $n$ exists for $A$, then we set $\ga(A)=\infty$.
\end{definition}

{\bf Proof of Theorem \ref{Thm}} \begin{proof}
Define $$c_k:=\inf_{A \in \Sigma_k} \sup_{A} I_{\infty}(u),$$
where 
$$\Sigma_k:=\{A \in \Sigma: \ga(A) \geq k\},$$ and $\Sigma$ is as in \eqref{26-11-17-7} .
Let, 
 $$K_c:=\{u \in X_{0,s_1,p}(\Om):\, I_\infty(u)=c,\,\, I'_\infty(u)=0\}$$ 
 and $\theta^*$ be as in (\ref{theta-*}) and  $\theta \in (0,\theta^*).$\\
 
{\bf Claim}: If $k,l \in \N$  such that $c_k=c_{k+1}=\dots=c_{k+l}=c$, then $c<0$ and $\ga(K_c) \geq l+1.$ 
 
 Let us consider the set
 $$I_\infty^{-\eps}:=\{u \in X_{0,s_1,p}(\Om):\, I_\infty(u) \leq -\eps\}.$$
 We will show that for any $k \in \N,$ there exists $\eps=\eps(k)>0$ such that $\ga(I_\infty^{-\eps}(u)) \geq k.$ 
 Fix $k \in \N.$ Let $X_k$ be a $k-$dimensional subspace of $X_{0,s_1,p}.$ Take $u \in X_k$ with $\|u\|_{0,s_1,p}=1.$ Thus for $0<\rho<r_1,$ using \eqref{26-11-17-3} we have
 \bea\label{I-infty}
 I(\rho u)=I_\infty(\rho u)&=&\frac{1}{p}\rho^p+\frac{\rho^q}{q}\|u\|_{0,s_2,q}^q-\frac{\theta \rho^r}{r}\Iom V(x)|u|^rdx\no\\
&-& \frac{\rho^{p^*}}{p^*}|u|_{p^*_{s_1}}^{p^*_{s_1}}-\la \Iom F(x,\rho u)dx.
 \eea
As $X_k$ is a finite dimensional subspace of $X_{0,s_1,p}(\Om),$ all norms in $X_k$ are equivalent and therefore
\begin{equation}\label{al-k}
\al_k:=\sup\{\|u\|_{0,s_2,q}^q :u \in X_k, \|u\|_{0,s_1,p}=1\}<\infty,
\end{equation}
 \begin{equation}\label{ba-k}
 \ba_k:=\inf\{|u|_{p^*_{s_1}}^{p^*_{s_1}}: u \in X_k,||u||_{0,s_1,p}=1\}>0,
 \end{equation}
 \begin{equation}\label{ga-k}
 \ga_k:=\inf\{|u|_r^r : u \in X_k,\|u\|_{0,s_1,p}=1\}>0.
 \end{equation}

Since using (A4), it follows that $F(x,\rho u)>0$, applying (\ref{I-infty})-(\ref{ga-k}), we obtain
$$I_\infty(\rho u)\leq \frac{1}{p}\rho^p+\al_k\frac{\rho^q}{q}-\sigma \ga_k \frac{\theta  \rho^r}{r}-\ba_k\frac{\rho^{p^*_{s_1}}}{p^*_{s_1}}.$$
 For any $\eps>0$, there exists $\rho\in(0,r_1)$ such that $I_\infty(\rho u) \leq -\eps$ for $u \in X_k$ with $\|u\|_{0,s_1,p} =1$. Define, $S_\rho=\{u \in X_{0,s_1,p}: \|u\|_{0,s_1,p}=\rho\}$. Then $S_\rho \cap X_k \subseteq I_\infty^{-\eps}$.   By Lemma \ref{l-ii}, it follows that
 $$ k=\ga(S_\rho \cap X_k)\leq \ga(I_\infty^{-\eps}).$$
 Therefore, we conclude $I_\infty^{-\eps} \in \Sigma_k$, since $I_\infty$ is continuous and even.  Consequently,
 \be\lab{4-1-1}c=c_k \leq \sup_{I_\infty^{-\eps} } I_{\infty}(u)\leq -\eps <0.\ee Note that by \eqref{+15} and \eqref{26-11-17-1}, we have $I_{\infty}(u)\geq h(\|u\|_{0,s_1,p})$, for all $u\in X_{0,s_1,p}$. Consequently, using \eqref{h-0} and \eqref{27-11-17-1'} in the definition of  $I_\infty$, it follows that $I_\infty$ is bounded from below. Thus $c=c_k>-\infty.$
By Lemma \ref{Lemma5}, $I_\infty$ satisfies (PS)$_c$ condition. We note that $K_c$ is a compact set. To see this, let $\{u_n\}$ be a sequence in $K_c.$ Then $I_\infty(u_n)=c$ and $I'_\infty(u_n)=0.$ Thus, $$\lim_{n \to \infty}I_\infty(u_n)=c,\lim_{n \to \infty}I'_\infty(u_n)=0.$$ Therefore, $\{u_n\}$ is a (PS)$_c$ sequence in $K_c.$ As $c<0,$ by Lemma \ref{Lemma5}, there exists a subsequence and $u \in X_{0,s_1,p}(\Om)$ such that $u_{n_k} \to u$ in $X_{0,s_1,p}(\Om)$ and $I_\infty(u)=c,\, I'_\infty(u)=0.$ As a result, $u \in K_c,$ that is, $\{u_n\}$ has a convergent subsequence in $K_c.$ 

Now let us complete the proof of our claim. Suppose the claim is not true, that is, $\ga(K_c) \leq l.$ Then, by Lemma \ref{l-ii}, there exists a neighbourhood of $K_c$, say $N_{r}(K_c)$ such that $\ga(N_{r}(K_c)) \leq l.$ Since $c<0,$ we may consider $N_{r}(K_c) \in I_\infty^0.$ By Lemma \ref{l-i}, there exists an odd homeomorphism $\bar{\eta}:X_{0,s_1,p}(\Om) \to X_{0,s_1,p}(\Om)$ such that
$$ \bar{\eta}(I_\infty^{c+\de}\setminus N_{r}(K_c)) \subset I_\infty^{c-\de}\quad\mbox{for some}\quad 0<\de<-c.$$
From the definition of $c=c_{k+l},$ we know there exists an $A \in \Sigma_{k+l}$ such that
$$\sup_{u \in A} I_\infty(u) <c+\de,$$ that is, $A \subset I_\infty^{c+\de}$ and
$$\bar{\eta}\big(A\setminus N_{r}(K_c)\big)\subset \bar{\eta}\big(I_\infty^{c+\de}\setminus N_{r}(K_c)\big)\subset I_\infty^{c-\de}.$$
This yields us:
\begin{equation}\label{sup-I-infty}
\sup_{u \in \bar{\eta}(A\setminus N_{r}(K_c))}I_\infty(u) \leq c-\de.
\end{equation}
 Again, by Lemma \ref{l-ii}, we have,
 $$\ga(\bar{\eta}(\overline{A\setminus N_{r}(K_c)})) = \ga(\overline{A\setminus N_{r}(K_c)})\geq \ga(A)-\ga(N_{r}(K_c))\geq k+l-l=k.$$
Therefore, we have $\bar{\eta}(\overline{A\setminus N_{r}(K_c)}) \in \Sigma_k$ and 
$\sup_{u \in \eta(\overline{A\setminus N_{r}(K_c)})}I_\infty(u)\geq c_k=c.$
This is a contradiction to (\ref{sup-I-infty}). Hence, we have the claim.\\

Now let us complete the proof of Theorem \ref{Thm}. Since $\Sigma_{k+1} \subseteq \Sigma_k,$ we have $c_k \leq c_{k+1} \,\, \forall\,\, k.$  If all $c_k$'s are distinct then $\ga(K_{c_k})\geq 1$, since $K_{c_k}$ is a compact set and by Lemma \ref{l-ii} (7), genus of a compact set is finite. Therefore, in that case  $I_{\infty}$ has infinitely many distinct critical points. If for some $k$, there exists $l$ such that $c_k=c_{k+1}=\cdots=c_{k+l}=c$, then by the above claim, $\ga(K_c)\geq l+1$ and therefore $K_c$ has infinitely many distinct elements, i.e, $I_{\infty}$ has infinitely many distinct critical points. Hence combining \eqref{4-1-1} along with Lemma \ref{Lemma5}, we conclude that $I$ has infinitely many distinct critical points. 
\end{proof}

 \section{proof of Theorem \ref{thm1}}
First, we consider the problem 
   \begin{equation}
 (\tilde P)\label{tl-P}
 \left\{\begin{aligned}
 (-\De)^{s_1}_p u + (-\De)^{s_2}_q u &=\theta(u^+)^{r-1} +(u^+)^{p^*_{s_1}-1}  \quad\text{in }\quad \Om, \\
 u=0& \quad\text{in}\quad \Rn \setminus \Omega.
 \end{aligned}
 \right.
 \end{equation}	

  \begin{definition}\label{sol} We say that $u\in X_{0,s_1,p}(\Om)$ is a weak solution of $(\tilde P)$
 if for all $\phi \in X_{0,s_1,p}$ we have, 
 	\Bea
 		&&\int_{\R^{2N}}\frac{|u(x)-u(y)|^{p-2}(u(x)-u(y))(\phi(x)-\phi(y))}{|x-y|^{N+ps_1}}dxdy \\
 		&&+ \int_{\R^{2N}}\frac{|u(x)-u(y)|^{q-2}(u(x)-u(y))(\phi(x)-\phi(y))}{|x-y|^{N+qs_2}}dxdy \\
 		&=& \theta\Iom (u(x)^{+})^{r-1}\phi(x)dx
 		+\Iom (u(x)^{+})^{p^*_{s_1}-1}\phi(x)dx .
 	\Eea
 	\end{definition}

The Euler-Lagrange energy functional associated to  $(\tilde P)$ is
 \bea\lab{I-theta}
 I_\theta(u) &=& \frac{1}{p}\|u\|^p_{0,s_1,p}+\frac{1}{q}\|u\|_{0,s_2,q}^q-\frac{\theta}{r}\Iom (u^{+})^r dx -\frac{1}{p^*_{s_1}}\Iom (u^+)^{p^*_{s_1}}dx.
 \eea
It can be checked that $I_\theta \in C^2(X_{0,s_1,p},\R)$ and any critical points of $I_\theta$ is a weak solution of $(\tilde P)$ and conversely.

We define, \be\lab{24th Jan}c_\theta=\inf_{u \in N_\theta}I_{\theta}(u),$$ where 
$$N_\theta:=\{u \in X_{0,s_1,p}(\Om)\setminus \{0\} : \<I'_{\theta}(u),u\>=0 \}.\ee 

We will show that $I_\theta$ has the Mountain Pass (MP) Geometry. 

\begin{lemma}\label{lem-1}
  Let $1<q<p<r<p^*_{s_1}.$ Then for any $\theta>0,$ 
  \begin{itemize}
  	\item [(a)]
  	there exist constants $\rho,\beta>0$ such that $I_\theta(u)>\beta$ for all $u \in X_{0,s_1,p}(\Om)$ with $\|u\|_{0,s_1,p}=\rho,$
  	\item [(b)]
  	there exist $u_0 \in X_{0,s_1,p}(\Om)$ such that $I_\theta(u_0)<0$ and $\|u_0\|_{0,s_1,p}>\rho.$ 
  \end{itemize}
\end{lemma}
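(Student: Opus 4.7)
The plan is to establish the Mountain Pass geometry by exploiting the homogeneity degrees: the coercive terms scale as $\|u\|^p$ and $\|u\|_{0,s_2,q}^q$, while the nonlinear terms scale as $\|u\|^r$ and $\|u\|^{p^*_{s_1}}$ with $p < r < p^*_{s_1}$. So the $\|u\|^p$ term dominates near zero, and the critical term dominates at infinity.

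For part (a), I would drop the nonnegative term $\frac{1}{q}\|u\|_{0,s_2,q}^q$ and estimate the two subtracted terms from above using the continuous Sobolev embeddings $X_{0,s_1,p}(\Om) \hookrightarrow L^r(\Om)$ and $X_{0,s_1,p}(\Om) \hookrightarrow L^{p^*_{s_1}}(\Om)$ (which are valid since $p < r < p^*_{s_1}$). This yields constants $C_1, C_2 > 0$ such that
\[
I_\theta(u) \geq \frac{1}{p}\|u\|_{0,s_1,p}^p - \frac{\theta C_1}{r}\|u\|_{0,s_1,p}^r - \frac{C_2}{p^*_{s_1}}\|u\|_{0,s_1,p}^{p^*_{s_1}}.
\]
Setting $\|u\|_{0,s_1,p} = \rho$ and factoring out $\rho^p$, I get a prefactor $\tfrac{1}{p} - \tfrac{\theta C_1}{r}\rho^{r-p} - \tfrac{C_2}{p^*_{s_1}}\rho^{p^*_{s_1}-p}$ which, since $r-p > 0$ and $p^*_{s_1}-p > 0$, tends to $\tfrac{1}{p}$ as $\rho \to 0^+$. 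Choosing $\rho > 0$ small enough makes the prefactor strictly positive, so there exists $\beta > 0$ with $I_\theta(u) \geq \beta$ on the sphere $\|u\|_{0,s_1,p} = \rho$.

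For part (b), I would pick any nonzero nonnegative function $v \in X_{0,s_1,p}(\Om)$ (e.g.\ a smooth compactly supported one in $\Om$) and evaluate along the ray $tv$ for $t > 0$:
\[
I_\theta(tv) = \frac{t^p}{p}\|v\|_{0,s_1,p}^p + \frac{t^q}{q}\|v\|_{0,s_2,q}^q - \frac{\theta t^r}{r}\int_{\Om} v^r\,dx - \frac{t^{p^*_{s_1}}}{p^*_{s_1}}\int_{\Om} v^{p^*_{s_1}}\,dx.
\]
Since $p^*_{s_1}$ is strictly greater than each of $p, q, r$, and $\int_{\Om} v^{p^*_{s_1}}\,dx > 0$, the term of order $t^{p^*_{s_1}}$ drives $I_\theta(tv) \to -\infty$ as $t \to \infty$. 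So I can fix $t_0$ large enough that simultaneously $\|t_0 v\|_{0,s_1,p} > \rho$ and $I_\theta(t_0 v) < 0$, and set $u_0 := t_0 v$.

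Neither step presents any real obstacle; the only mild subtlety is that the subcritical $q$-term has a lower homogeneity than $p$, but since it enters with a positive sign, it only helps in (a) and is harmless (dominated by the critical term) in (b). Note also that the truncation $u^+$ does not affect any of the inequalities, since $(u^+)^r \leq |u|^r$ and $(u^+)^{p^*_{s_1}} \leq |u|^{p^*_{s_1}}$ pointwise.
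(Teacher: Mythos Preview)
Your proof is correct and follows essentially the same route as the paper: both parts rely on comparing homogeneity degrees, bounding the subtracted terms via the Sobolev embedding $X_{0,s_1,p}(\Om)\hookrightarrow L^{p^*_{s_1}}(\Om)$ (the paper additionally inserts a H\"older step to pass from $L^r$ to $L^{p^*_{s_1}}$, which amounts to your constant $C_1$), and then sending $t\to\infty$ along a ray through a function with $u^+\not\equiv 0$. The only cosmetic difference is that the paper retains the nonnegative $\tfrac{1}{q}\|u\|_{0,s_2,q}^q$ term in the lower bound whereas you explicitly discard it; this changes nothing.
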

\begin{proof}
Using Sobolev inequality and H\"older inequality in the definition of $I_{\theta}$, we obtain
\Bea
I_{\theta}(u)&\geq&\frac{1}{p}\|u\|_{0,s_1,p}^p+\frac{1}{q}\|u\|_{0,s_2,q}^q-\frac{\theta}{r}|\Omega|^{\frac{p^*_{s_1}-r}{p^*_{s_1}}}|u^+|_{p^*_{s_1}}^r-\frac{1}{p^*_{s_1}}|u^+|^{p^*_{s_1}}_{p^*_{s_1}}\\
&\geq&\frac{1}{p}\|u\|_{0,s_1,p}^p+\frac{1}{q}\|u\|_{0,s_2,q}^q-\frac{\theta}{r}|\Omega|^{\frac{p^*_{s_1}-r}{p^*_{s_1}}}S_{s_1,p}^{\frac{-r}{p}}\|u\|_{0,s_1,p}^r-\frac{1}{p^*_{s_1}}S_{s_1,p}^{\frac{-p^*_{s_1}}{p}}\|u\|^{p^*_{s_1}}_{0,s_1,p}.
\Eea	
As $1<q<p<r<p^*_{s_1}$, there exist two constants $\rho, \beta>0$ such that $I_{\theta}(u)>\beta$ for all $u \in X_{0,s_1,p}$ with $\|u\|_{0,s_1,p}=\rho$ and that proves (a).
 
To prove (b), we fix $u \in X_{0,s_1,p}(\Om)$ with  $u^+ \nequiv 0.$ Then it is easy to see that $\lim_{t \to +\infty}I_\theta(tu)=-\infty$. Thus we can choose $t_0>0$ such that 
 $\|t_0 u\|_{0,s_1,p}>\rho$ and $I_{\theta}(t_0 u)<0.$  Hence (b) holds.
\end{proof}

Define, \be\lab{1-12-17-2}C_\theta:=\inf_{u \in X_{0,s_1,p} \setminus \{0\}} \sup_{t \geq 0}I_\theta(tu).\ee

\begin{lemma}\label{lem-2}
Let $1<q<p<r<p^*_{s_1}.$ Then for any $\theta>0$, $I_\theta$ satisfies the $(PS)_c$ conditions for all $c \in \big(0,\frac{s_1}{N}(S_{s_1,p})^\frac{N}{s_1p}\big)$. Furthermore, there exists $\theta^*>0$ such that $$C_{\theta} \in \bigg(0,\frac{s_1}{N}(S_{s_1,p})^\frac{N}{s_1p}\bigg) \quad\mbox{for}\quad \theta>\theta^*.$$	
\end{lemma}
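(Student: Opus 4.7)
My plan splits the statement into (i) verification of $(PS)_c$ below the critical level $c^* := \frac{s_1}{N}(S_{s_1,p})^{N/(s_1 p)}$, and (ii) the location estimate $C_\theta \in (0, c^*)$ for $\theta$ sufficiently large. For (i), given a PS sequence $\{u_n\}$ I would first obtain boundedness in $X_{0,s_1,p}(\Omega)$ from the combination
\[
I_\theta(u_n) - \tfrac{1}{r}\langle I'_\theta(u_n), u_n\rangle = \left(\tfrac{1}{p} - \tfrac{1}{r}\right)\|u_n\|^p_{0,s_1,p} + \left(\tfrac{1}{q} - \tfrac{1}{r}\right)\|u_n\|^q_{0,s_2,q} + \left(\tfrac{1}{r} - \tfrac{1}{p^*_{s_1}}\right)|u_n^+|^{p^*_{s_1}}_{p^*_{s_1}},
\]
all three coefficients being strictly positive since $q<p<r<p^*_{s_1}$; Lemma 2.2 then yields boundedness in $X_{0,s_2,q}(\Omega)$ as well. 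Extracting a subsequence $u_n \rightharpoonup u$ weakly, a.e., and strongly in $L^\gamma(\Omega)$ for every $\gamma \in [1,p^*_{s_1})$, the Vitali-type duality argument used in the proof of Lemma 3.1 gives $I'_\theta(u)=0$.

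Setting $v_n := u_n - u$, three applications of Brezis--Lieb (to both Gagliardo seminorms and to $|\cdot|^{p^*_{s_1}}_{p^*_{s_1}}$) together with $|v_n^+|^r_r \to 0$ (by Rellich, since $r < p^*_{s_1}$) give, after passing to a further subsequence along which $a := \lim \|v_n\|^p_{0,s_1,p}$, $b := \lim \|v_n\|^q_{0,s_2,q}$, $d := \lim |v_n^+|^{p^*_{s_1}}_{p^*_{s_1}}$ exist, the identity $a + b = d$ from $\langle I'_\theta(u_n), u_n\rangle \to 0$ and $\langle I'_\theta(u), u\rangle = 0$. Combined with $a \geq S_{s_1,p} d^{p/p^*_{s_1}} \geq S_{s_1,p} a^{p/p^*_{s_1}}$ via \eqref{S}, this forces either $a = 0$ or $a \geq (S_{s_1,p})^{N/(s_1 p)}$; in the second case, noting that $I_\theta(u) \geq 0$ by the same Nehari-type manipulation applied to $u$,
\[
c = I_\theta(u) + \tfrac{a}{p} + \tfrac{b}{q} - \tfrac{a+b}{p^*_{s_1}} \geq \tfrac{s_1 a}{N} \geq \tfrac{s_1}{N}(S_{s_1,p})^{N/(s_1 p)},
\]
contradicting the hypothesis. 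Hence $a = 0$ and $u_n \to u$ strongly in $X_{0,s_1,p}(\Omega)$.

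For (ii), the positivity $C_\theta > 0$ is immediate from Lemma 4.1(a): for any nonzero $u$, choosing $t^* := \rho/\|u\|_{0,s_1,p}$ yields $I_\theta(t^*u) \geq \beta > 0$. For the upper bound I would fix once and for all any $u_0 \in X_{0,s_1,p}(\Omega)$ with $u_0 \geq 0$ and $u_0 \not\equiv 0$, so that $|u_0|_r > 0$, and bound
\[
\sup_{t \geq 0} I_\theta(tu_0) \leq \sup_{t \geq 0}\left( \tfrac{t^p}{p}\|u_0\|^p_{0,s_1,p} + \tfrac{t^q}{q}\|u_0\|^q_{0,s_2,q} - \tfrac{\theta t^r}{r}|u_0|^r_r \right).
\]
An elementary scaling analysis of the right-hand side (since $r > p > q$, the unique positive maximizer scales like $\theta^{-1/(r-p)}$ and the maximum value like $\theta^{-p/(r-p)}$) shows this sup tends to $0$ as $\theta \to \infty$. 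Hence for $\theta$ larger than a suitable $\theta^*$ the sup is strictly less than $c^*$, yielding $C_\theta < c^*$.

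I expect the only delicate part to be the bookkeeping in Part (i): although the argument is essentially the one already deployed in Claim 2 of Lemma 3.1 (and can be reused almost verbatim after stripping the $V$- and $f$-terms), one must track three simultaneous Brezis--Lieb splittings and verify that the inequality chain closes under $c < c^*$. In particular, no direct appeal to the full concentration--compactness Lemma 2.5 is needed; only the Sobolev inequality defining $S_{s_1,p}$ and the subcritical compactness used to dispose of $|v_n^+|^r_r$.
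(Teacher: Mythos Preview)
Your argument is correct and parallels the paper's closely. For part (i) you reproduce the Brezis--Lieb/Sobolev dichotomy of Lemma~\ref{lemma3}; your boundedness computation via $I_\theta - \tfrac{1}{r}\langle I'_\theta,\cdot\rangle$ is in fact more direct than the contradiction argument of Claim~1 there, which is what the paper invokes. For part (ii) you differ slightly: the paper tracks the exact maximizer $t_\theta$ of $t\mapsto I_\theta(tu_0)$ and reads off $t_\theta\to 0$ as $\theta\to\infty$ from the critical-point equation, whereas you discard the (negative) critical-exponent term to obtain an explicit upper-bound functional. Your route is marginally cleaner; just note that with the $q$-term retained the maximizer of your upper bound scales like $\theta^{-1/(r-q)}$ rather than $\theta^{-1/(r-p)}$, though the conclusion $\sup\to 0$ is unaffected. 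One further small point of bookkeeping: in your splitting, the quantity that actually satisfies $a+b=d$ is $d=\lim_n|u_n^+-u^+|_{p^*_{s_1}}^{p^*_{s_1}}$, not $\lim_n|(u_n-u)^+|_{p^*_{s_1}}^{p^*_{s_1}}$; since $|u_n^+-u^+|\le|v_n|$ the Sobolev step $a\ge S_{s_1,p}\,d^{p/p^*_{s_1}}$ still closes.
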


\begin{proof}
Let $c \in \big(0,\frac{s_1}{N}(S_{s_1,p})^\frac{N}{s_1p}\big) $ and 
$\{u_n\}_{n \geq 1} \subset X_{0,s_1,p}(\Om)$ be a $(PS)_c$ sequence of $I_\theta(\cdot)$. From Claim 1 in the proof of Lemma \ref{lemma3}, it follows that $\{u_n\}$ is uniformly bounded in  $X_{0,s_1,p}(\Om)$.  Therefore, there exists $u \in X_{0,s_1,p}(\Om)$ such that up to a subsequence,  $u_n \deb u$ in $X_{0,s_1,p}(\Om)$ and $u_n \to u$  in $L^{\ga}(\Om)$ for $1\leq \ga<p^*_{s_1}$  and $u_n \to u$ a.e. in $\Rn.$ Also, following the same arguments as in the proof of Lemma \ref{lemma3}, we see that $u$ is a critical point of $I_\theta$, that is $\langle I_{\theta}'(u), \phi\rangle=0$. Next, to prove $u_n\to u$ strongly in $X_{0,s_1,p}(\Om)$, we follow the arguments along the same line as in the proof of claim 2 of Lemma \ref{lemma3} and obtain either $\|u_n-u\|_{0,s_1,p}=o(1)$ or \eqref{25-11-17-4} holds with $\la=0$. Thus in the second case,
$$c\geq \frac{s_1}{N}(S_{s_1,p})^\frac{N}{s_1p}+\frac{s_1}{N}|u^+|_{p^*_{s_1}}^{p^*_{s_1}}+\theta \eta\bigg(\frac{1}{p}-\frac{1}{r}\bigg)\|u^+\|_{0,s_2,r}^r+\bigg(\frac{1}{q}-\frac{1}{p}\bigg)\|u\|_{0,s_2,q}^q\geq \frac{s_1}{N}(S_{s_1,p})^\frac{N}{s_1p}.$$ This contradicts the fact that $c \in \big(0,\frac{s_1}{N}(S_{s_1,p}\big)^\frac{N}{s_1p})$. Hence $\|u_n-u\|_{0,s_1,p}=o(1)$.
 Therefore, $I_\theta$ satisfies $(PS)_c$ condition for $c \in (0,\frac{s_1}{N}(S_{s_1,p})^\frac{N}{s_1p}).$

 Next, to prove  $C_\theta\in (0,\frac{s_1}{N}(S_{s_1,p})^\frac{N}{s_1p})$ we choose $u_0 \in X_{0,s_1,p}(\Om)$ with $u_0^- \equiv 0$ and 
 
 $|u_0|_{p^*_{s_1}}=1.$
 As $\lim_{t \to \infty}I_\theta(tu_0)=-\infty$ and $\lim_{t \to 0}I_\theta(tu_0)=0,$  there exists $t_\theta>0$ such that $\sup_{t \geq 0}I_\theta(tu_0)=I_\theta(t_\theta u_0).$ 
Therefore,
 $$t_\theta^{p-1}\|u_0\|_{0,s_1,p}^p+t_\theta^{q-1}\|u_0\|_{0,s_2,q}^q-\theta t_\theta^{r-1}|u_0|^r_r-t_\theta^{p^*_{s_1}-1}=0.$$
 So, we get,
 $t_\theta^{p-r}\|u_0\|_{0,s_1,p}^p+t_\theta^{q-r}\|u_0\|_{0,s_2,q}^q-t_\theta^{p^*_{s_1}-r}=\theta|u_0|_r^r.$
 As $1<q<p<r<p^*_{s_1},$ we get $t_\theta \to 0$ as $\theta \to \infty.$ 
Thus, there exists $\theta^*>0$ such that for any $\theta>\theta^*$ we have,
 $$\sup_{t \geq 0}I_\theta(tu_0)<\frac{s_1}{N}(S_{s_1,p})^\frac{N}{s_1p}. $$ Hence, $C_\theta \in \big(0,\frac{s_1}{N}(S_{s_1,p}\big)^\frac{N}{s_1p})$ for $\theta>\theta^*.$
\end{proof}
 
 \vspace{2mm}

{\bf Proof of theorem \ref{thm1}}: Using Lemma \ref{lem-1}, Lemma \ref{lem-2} and Lemma \ref{lem-10}, we conclude that $I_\theta$ has a critical point  $u\in X_{0,s_1,p}$ for $\theta>\theta^*$ where $\theta^*$ is given in (\ref{theta-*}).

{\bf Claim:} $u\geq 0$ almost everywhere.

Indeed,  \bea\label{H1'}
0= \<I_\theta'(u) ,u^-\>&=&\int_{\R^{2N}}\frac{|u(x)-u(y)|^{p-2}(u(x)-u(y))(u^-(x)-u^-(y))}{|x-y|^{N+s_1p}}dxdy\no\\
 &+&\int_{\R^{2N}}\frac{|u(x)-u(y)|^{q-2}(u(x)-u(y))(u^-(x)-u^-(y))}{|x-y|^{N+s_2q}}dxdy\no\\
 &:=&K_1+K_2,
 \eea
  Note that,
 \bea\label{H2'}
 (u(x)-u(y))(u^-(x)-u^-(y))
  &=&-u^+(y)u^-(x)-u^+(x)u^-(y)-(u^-(x)-u^-(y))^2\no\\
 &\leq&-(u^-(x)-u^-(y))^2 \leq 0
 \eea
 and
 \be\label{H3'}
|u(x)-u(y)|=\big(|u(x)-u(y)|^2\big)^\frac{1}{2} \geq\big( |u^-(x)-u^-(y)|^2\big)^\frac{1}{2}=|u^-(x)-u^-(y)|.
 \ee
 Since $2\leq q<p$, using (\ref{H2'}) and (\ref{H3'}), we obtain
 $$K_2\leq -\int_{\R^{2N}}\frac{|u^-(x)-u^-(y)|^q}{|x-y|^{N+s_2q}}dxdy=-\|u^-\|_{0,s_2,q}^q.$$
 Similarly, $K_1 \leq -\|u^-\|_{0,s_1,p}^p.$
 Therefore, (\ref{H1'}) implies, $\|u^-\|_{0,s_1,p}^p+\|u^-\|_{0,s_2,q}^q\leq 0$ that is, $u^-=0$ a.e and this proves the claim.

Further, we observe that  $C_\theta>0$, since $I_\theta$ satisfies the mountain pass geometry. Therefore, as $u$ is the critical point corresponding to $C_\theta$, $u$ must be nontrivial. Thus, $u$ is nontrivial nonnegative solution of $(\tilde P)$. Consequently,  $u$ is nontrivial nonnegative solution of $(P)$.

\section {Proof of Theorem \ref{thm2}}
We break the proof of Theorem \ref{thm2} into several lemmas. For the rest of the section, we assume  
\be\lab{1-12-17-3}
 N>p^2s_1 \quad\text{and}\quad 2\leq q<\frac{N(p-1)}{N-s}<p \leq \max \{p,\, p^*_{s_1}-\frac{q}{q-1}\}<r<p^*_{s_1}.
 \ee

Let $U$ be a radially symmetric and decreasing minimizer for the Sobolev constant defined in \eqref{S} for $s=s_1$ and it is known from \cite{BMS} that there exists constants $c_1,\, c_2>0$ and $\theta>1$ such that
\be\lab{1-12-17-1}
\frac{c_1}{|x|^{\frac{N-s_1p}{p-1}}}\leq U(|x|)\leq\frac{c_2}{|x|^\frac{N-s_1p}{p-1}} \quad \forall\,\, |x|\geq 1,
\ee
\begin{equation}\label{U-theta-r}
\frac{U(\theta r)}{U(r)} \leq \frac{1}{2} \quad\forall\,\, r\geq 1.
\end{equation}
Multiplying $U$ by a positive constant if necessary, we may assume that $U$ satisfies  the following:
\be\lab{27-12-9} (i)\,\,  (-\De)^{s_1}_p U=U^{p^*_{s_1}-1} \quad (ii)\,\, \|U\|_{0,s_1,p}^p=|U|_{p^*_{s_1}}^{p^*_{s_1}}=(S_{s_1,p})^{N/s_1p}.\ee
 For any $\de>0,$ the function $$U_\de(x)=\frac{1}{\de^{\frac{N-s_1p}{p}}}U(\frac{|x|}{\de})$$ is also a minimizer for $
S_{s_1,p}$ satisfying (i) and (ii).
Let $\theta$ be the universal constant defined as in (\ref{U-theta-r}).  We may assume without loss of generality that $0\in\Om$. For $\de, \, R>0$,  we define some auxiliary functions as in \cite{MPSY}. 

 $m_{\de,R}:=\frac{U_\de(R)}{U_\de(R)-U_\de(\theta R)},$ and $g_{\de,R}:[0,+\infty) \to \R$ by
 \begin{equation}
 g_{\de,R}(t)=\begin{cases}
 0, \quad \quad \quad \quad \quad \quad \quad 0\leq t \leq U_\de(\theta R)\\
 m_{\de,R}^p(t-U_\de(\theta R)),\,\  U_\de(\theta R) \leq t \leq U_\de(R)\\
 t+U_\de(R) (m_{\de,R}^{p-1}-1),\,\  t \geq U_\de(R),
 \end{cases}
 \end{equation}
 and $G_{\de,R}:[0,\infty) \to \R$ by
 \begin{equation}
 G_{\de,R}(t)=\int_0^t (g'_{\de,R}(\tau))^{1/p}d \tau=\begin{cases}
 0, \quad \quad \quad \quad \quad \quad \quad  0\leq t \leq U_\de(\theta R)\\
 m_{\de,R}(t-U_\de(\theta R)),\,\ U_\de(\theta R) \leq t \leq U_\de(R)\\
 t,\quad \quad \quad \quad \quad \quad \quad  t \geq U_\de(R).
 \end{cases}
 \end{equation}
 We note that $g_{\eps,\de}$ and $G_{\de,R}$ are non-decreasing and absolutely continuous.  Note that by definition,
 \begin{equation*}
 G'_{\de,R}(t)=\big(g'_{\de,R}(t)\big)^{\frac{1}{p}}=\begin{cases}
 0, \quad  \quad \quad 0\leq t < U_\de(\theta R)\\
 m_{\de,R},\,\  U_\de(\theta R) < t < U_\de(R)\\
 1,\,\, \qquad  t > U_\de(R),
 \end{cases}
 \end{equation*}
Therefore, \be\label{EQ3}G'_{\de,R}(t) \leq \max\{m_{\de,R},1\} \leq m_{\de,R}+1.\ee Next, we estimate  $m_{\de,R}$ as follows
 \be\label{EQ4}
 m_{\de,R}=\frac{U_\de(R)}{U_\de(R)-U_\de(\theta R)}= \frac{U(\frac{R}{\de})}{U(\frac{R}{\de})-U(\frac{R \theta}{\de})}.
 \ee
 Choose $\de>0$, small enough so that $\frac{R\theta}{\de}>1$ and thus
  $\frac{U(\frac{R\theta}{\de})}{U(\frac{R}{\de})} \leq \frac{1}{2}$. Therefore, using \eqref{1-12-17-1} we have
   \be \label{EQ6}
 m_{\de,R}= \frac{U(\frac{R}{\de})}{U(\frac{R}{\de})-U(\frac{R\theta}{\de})}\leq \frac{U(\frac{R}{\de})}{U(\frac{R\theta}{\de})}\leq \frac{c_2}{\big(\frac{R}{\de} \big)^{\frac{N-s_1p}{p-1}}}\times \frac{\big(\frac{R \theta}{\de} \big)^{\frac{N-s_1p}{p-1}}}{c_1}=\frac{c_2}{c_1}\theta^{\frac{(N-s_1p)}{p-1}}.
 \ee

 Consider the radially symmetric non-increasing function $\bar{u}_{\de,R}:[0,+\infty) \to \R$ by $$\bar{u}_{\de,R}(r)=G_{\de,R}(U_\de(r)).$$
Then we observe that, $\bar{u}_{\de,R}$ satisfies:
 	\begin{equation}
 	\bar{u}_{\de,R}(r)=\begin{cases}
 	U_\de(r), \quad r \leq R\\
 	0, \quad r \geq \theta R.\\
 	\end{cases}
 	\end{equation}
 	Therefore, we have the following estimates from \cite{MPSY}.
 	
 	\begin{lemma}\label{LEMMA1}\cite[Lemma 2.7]{MPSY}
 	For any $R>0,$	there exists $C=C(N,p,s_1)>0$ such that for any $\de \leq \frac{R}{2},$
 		\begin{equation}\label{G1}
 		\|\bar{u}_{\de,R}\|_{0,s_1,p}^p \leq (S_{s_1,p})^{N/s_1p}+C\big(\frac{\de}{R}\big)^{\frac{N-s_1p}{p-1}},
 		\end{equation}
 		\begin{equation}\label{G2}
 		|\bar{u}_{\de,R}|_p^p \geq \begin{cases}
 		\frac{1}{C}\de^{s_1p} log(R/\de), \quad N=s_1p^2\\
 		\frac{1}{C}\de^{s_1p}, \quad \quad N>s_1p^2
 		\end{cases}
 		\end{equation}
 		and
 		\begin{equation}\label{G3}
 		|\bar{u}_{\de,R}|_{p^*_{s_1}}^{p^*_{s_1}} \geq (S_{s_1,p})^{N/s_1p}-C\big(\frac{\de}{R}\big)^{N/(p-1)}.
 		\end{equation}
 	\end{lemma}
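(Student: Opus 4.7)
The plan is to treat the three estimates separately, each exploiting the factorization $\bar{u}_{\delta,R} = G_{\delta,R}(U_\delta)$ together with the scaling $U_\delta(x) = \delta^{-(N-s_1p)/p}\,U(|x|/\delta)$. In every case the substitution $y = x/\delta$ reduces the relevant integral to one on scales of order $R/\delta$, after which the decay estimate \eqref{1-12-17-1} dictates the final rate.

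For the upper bound \eqref{G1}, which is the subtle one, I would first record the pointwise inequality
\[
|G_{\delta,R}(a)-G_{\delta,R}(b)|^p \;\leq\; \bigl|g_{\delta,R}(a)-g_{\delta,R}(b)\bigr|\,|a-b|^{p-1}, \qquad a,b \geq 0.
\]
This is a direct consequence of $G'_{\delta,R} = (g'_{\delta,R})^{1/p}$, written as $G_{\delta,R}(a)-G_{\delta,R}(b) = \int_b^a (g'_{\delta,R}(t))^{1/p}\,dt$, combined with H\"older's inequality with exponents $p$ and $p/(p-1)$. Since $g_{\delta,R}$ is non-decreasing, the differences $U_\delta(x)-U_\delta(y)$ and $g_{\delta,R}(U_\delta(x))-g_{\delta,R}(U_\delta(y))$ share a sign; integrating the inequality against $|x-y|^{-N-s_1p}$ therefore gives
\[
\|\bar{u}_{\delta,R}\|_{0,s_1,p}^p \;\leq\; \langle (-\Delta)^{s_1}_p U_\delta,\, g_{\delta,R}(U_\delta)\rangle \;=\; \int_{\R^N} U_\delta^{p^*_{s_1}-1}\,g_{\delta,R}(U_\delta)\,dx,
\]
by \eqref{27-12-9}(i) applied to $U_\delta$; the test function $g_{\delta,R}(U_\delta)$ is admissible because $g_{\delta,R}$ is Lipschitz with compact support in its range variable, so $g_{\delta,R}(U_\delta)$ is compactly supported in $B_{\theta R}$. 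I would then split this integral into $B_R$, the transition annulus $B_{\theta R}\setminus B_R$, and its exterior (on which the integrand vanishes). The principal contribution is $\int_{B_{\theta R}} U_\delta^{p^*_{s_1}}\,dx \leq (S_{s_1,p})^{N/s_1p}$ by \eqref{27-12-9}(ii); the corrections contain factors $U_\delta(R)$ or $m_{\delta,R}^{\,p}\,U_\delta(\theta R)$ (the latter bounded via \eqref{EQ6}) multiplied by integrals of $U_\delta^{p^*_{s_1}-1}$. A scaling computation combined with \eqref{1-12-17-1} shows that these corrections are all dominated by $C(\delta/R)^{(N-s_1p)/(p-1)}$.

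Estimates \eqref{G2} and \eqref{G3} are scaling-plus-polar-coordinate computations. Since $\bar{u}_{\delta,R} \equiv U_\delta$ on $B_R$,
\[
|\bar{u}_{\delta,R}|_p^p \;\geq\; \int_{B_R} U_\delta^p\,dx \;=\; \delta^{s_1p}\int_{B_{R/\delta}} U(|y|)^p\,dy,
\]
and \eqref{1-12-17-1} reduces the right-hand side to a constant multiple of $\int_1^{R/\delta} r^{N-1-p(N-s_1p)/(p-1)}\,dr$; the exponent equals $-1$ exactly when $N = s_1p^2$ (producing the $\log(R/\delta)$ factor), and is strictly smaller than $-1$ when $N > s_1p^2$, in which case the contribution from $\{1 \leq r \leq 2\}$ alone bounds the integral below by a constant independent of $R/\delta$. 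For \eqref{G3}, the monotonicity $0 \leq \bar{u}_{\delta,R} \leq U_\delta$ and equality on $B_R$ yield
\[
|\bar{u}_{\delta,R}|_{p^*_{s_1}}^{p^*_{s_1}} \;\geq\; \int_{B_R} U_\delta^{p^*_{s_1}}\,dx \;=\; (S_{s_1,p})^{N/s_1p} - \int_{\R^N\setminus B_R} U_\delta^{p^*_{s_1}}\,dx,
\]
and the scaled tail integral is bounded by $C\int_{R/\delta}^\infty r^{-(N+p-1)/(p-1)}\,dr = C(\delta/R)^{N/(p-1)}$.

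The main obstacle is \eqref{G1}: one must justify the nonlinear chain-rule inequality above, invoke \eqref{27-12-9}(i) in the weak form required by the pairing $\langle (-\Delta)^{s_1}_p U_\delta,\,g_{\delta,R}(U_\delta)\rangle$, and bookkeep the transition-layer corrections carefully enough to retain the sharp rate $(\delta/R)^{(N-s_1p)/(p-1)}$. Once this is in place, \eqref{G2} and \eqref{G3} follow by direct polar-coordinate integration against \eqref{1-12-17-1}, with the dichotomy $N = s_1p^2$ versus $N > s_1p^2$ in \eqref{G2} reflecting whether the radial integrand sits on or below the borderline exponent $-1$.
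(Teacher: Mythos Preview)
The paper does not prove this lemma at all; it is quoted verbatim from \cite[Lemma~2.7]{MPSY} and used as a black box. Your outline is correct and is precisely the argument given in \cite{MPSY}: the chain-rule inequality $|G_{\delta,R}(a)-G_{\delta,R}(b)|^p\le |g_{\delta,R}(a)-g_{\delta,R}(b)|\,|a-b|^{p-1}$ (obtained by H\"older from $G'=(g')^{1/p}$) together with the weak equation \eqref{27-12-9}(i) is exactly how \eqref{G1} is handled there, and the scaling-plus-decay computations you sketch for \eqref{G2} and \eqref{G3} are the same as in the source. Your bookkeeping of the correction terms in \eqref{G1} is also right: once $m_{\delta,R}$ is bounded via \eqref{EQ6}, the term $U_\delta(R)\int_{B_R}U_\delta^{p^*_{s_1}-1}$ scales exactly to $(\delta/R)^{(N-s_1p)/(p-1)}$ and the annular piece $\int_{B_{\theta R}\setminus B_R}U_\delta^{p^*_{s_1}}$ is even smaller, of order $(\delta/R)^{N/(p-1)}$.
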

 
 	Let $\eps>0.$ Take $R>0$ be fixed such that $B_{\theta R} \subset \subset \Om.$ Let us define the function $u_{\eps,R}:[0,+\infty) \to \R$ by
\be\lab{24-12-1}u_{\eps,R}(r)=\eps^{-{\frac{(N-s_1p)}{p^2}}}\bar{u}_{\de,R}(r) \quad\mbox{with}\quad \de=\eps^{\frac{(p-1)}{p}}, \quad\forall\, r \geq 0.\ee
 Clearly, $u_{\eps,R} \subset X_{0,s_1,p}(\Om),$ that is, $u_{\eps,R} \equiv 0$ in $\Rn \setminus \Om.$  Therefore, applying \eqref{G1} to \eqref{24-12-1} yields
\be\lab{Est-3}
\|u_{\eps,R}\|_{0,s_1,p}^p \leq (S_{s_1,p})^{N/s_1p}\eps^{-\frac{(N-s_1p)}{p}}+O(1).
\ee
 
 \begin{lemma}\lab{l:27-1}
$|u_{\eps,R}|_{p^*_{s_1}}^{p} = (S_{s_1,p})^{\frac{N-s_1p}{s_1p}}\eps^{-\frac{(N-s_1p)}{p}}+O(1).$
\end{lemma}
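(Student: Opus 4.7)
\smallskip

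\textbf{Proof proposal.} The plan is a direct computation: we bound $|\bar u_{\de,R}|_{p^*_{s_1}}^{p^*_{s_1}}$ from above and below, then pass through the scaling defining $u_{\eps,R}$, and finally take the $p/p^*_{s_1}$-th power and simplify the exponents using $\de=\eps^{(p-1)/p}$ and the identity $p^*_{s_1}=Np/(N-s_1p)$.

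First, I would combine two bounds for $|\bar u_{\de,R}|_{p^*_{s_1}}^{p^*_{s_1}}$. The lower bound is exactly \eqref{G3}: $|\bar u_{\de,R}|_{p^*_{s_1}}^{p^*_{s_1}}\ge (S_{s_1,p})^{N/(s_1p)}-C(\de/R)^{N/(p-1)}$. For the upper bound, note that by construction $G_{\de,R}(t)\le t$ for all $t\ge 0$ (immediate from the piecewise definition, since on the middle interval $m_{\de,R}(t-U_\de(\theta R))\le t$ iff $(m_{\de,R}-1)t\le m_{\de,R}U_\de(\theta R)$, which one checks at the endpoint $t=U_\de(R)$ using the definition of $m_{\de,R}$). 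Hence $\bar u_{\de,R}(r)=G_{\de,R}(U_\de(r))\le U_\de(r)$, and using \eqref{27-12-9}(ii) together with the scale invariance $|U_\de|_{p^*_{s_1}}^{p^*_{s_1}}=|U|_{p^*_{s_1}}^{p^*_{s_1}}$, we get $|\bar u_{\de,R}|_{p^*_{s_1}}^{p^*_{s_1}}\le (S_{s_1,p})^{N/(s_1p)}$. The two bounds yield
\[
|\bar u_{\de,R}|_{p^*_{s_1}}^{p^*_{s_1}}=(S_{s_1,p})^{N/(s_1p)}+O\!\left(\de^{N/(p-1)}\right).
\]

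Next, by the definition \eqref{24-12-1} we have $|u_{\eps,R}|_{p^*_{s_1}}^{p^*_{s_1}}=\eps^{-(N-s_1p)p^*_{s_1}/p^2}|\bar u_{\de,R}|_{p^*_{s_1}}^{p^*_{s_1}}$, and since $(N-s_1p)p^*_{s_1}/p^2=N/p$ this gives
\[
|u_{\eps,R}|_{p^*_{s_1}}^{p^*_{s_1}}=\eps^{-N/p}\left[(S_{s_1,p})^{N/(s_1p)}+O(\de^{N/(p-1)})\right].
\]
Raising to the power $p/p^*_{s_1}=(N-s_1p)/N$ and applying the Taylor expansion $(1+x)^\alpha=1+O(x)$ for small $x$ produces
\[
|u_{\eps,R}|_{p^*_{s_1}}^p=\eps^{-(N-s_1p)/p}(S_{s_1,p})^{(N-s_1p)/(s_1p)}+O\!\left(\eps^{-(N-s_1p)/p}\,\de^{N/(p-1)}\right).
\]

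Finally, I would simplify the error term using $\de=\eps^{(p-1)/p}$, so $\de^{N/(p-1)}=\eps^{N/p}$, and hence $\eps^{-(N-s_1p)/p}\de^{N/(p-1)}=\eps^{s_1}=o(1)$ as $\eps\to 0^+$. This absorbs into $O(1)$ and yields the claim. There is no essential obstacle here beyond keeping the exponents straight; the upper bound $\bar u_{\de,R}\le U_\de$ is the only non-bookkeeping step, and it is immediate from the definition of $G_{\de,R}$.
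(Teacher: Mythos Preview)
Your argument is correct and in fact sharper than the paper's own proof. Both proofs use the same lower bound via \eqref{G3}, but for the upper bound the paper estimates $G_{\de,R}(U_\de)\le |G'_{\de,R}|_{L^\infty}\,U_\de$ and then controls $|G'_{\de,R}|_{L^\infty}\le \max\{m_{\de,R},1\}$ by a universal constant $C$ through \eqref{EQ3}--\eqref{EQ6}. This yields only $|u_{\eps,R}|_{p^*_{s_1}}^{p}\le C\,(S_{s_1,p})^{(N-s_1p)/(s_1p)}\eps^{-(N-s_1p)/p}$ with some $C>1$, which strictly speaking does not match the precise leading constant claimed in the lemma. Your direct observation $G_{\de,R}(t)\le t$ (immediate from the piecewise definition, with equality at $t=U_\de(R)$) gives the sharp bound $|\bar u_{\de,R}|_{p^*_{s_1}}^{p^*_{s_1}}\le (S_{s_1,p})^{N/(s_1p)}$, hence the exact asymptotic $(S_{s_1,p})^{(N-s_1p)/(s_1p)}\eps^{-(N-s_1p)/p}+O(\eps^{s_1})$. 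This is both simpler and stronger, and it is what is actually needed downstream in Lemma~\ref{lem-2'} where one divides by $|u_\eps|_{p^*_{s_1}}^{p}$ and needs the leading coefficient to cancel against that of $\|u_\eps\|_{0,s_1,p}^p$.
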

 \begin{proof}
  Applying \eqref{G3}, it is easy to see that  $$|u_{\eps,R}|_{p^*_{s_1}}^{p} \geq (S_{s_1,p})^{\frac{N-s_1p}{s_1p}}\eps^{-\frac{(N-s_1p)}{p}}+O(1).$$ To see the upper estimate, we observe that
 \Bea
	|u_{\eps,R}|_{p^*_{s_1}}^{p^*_{s_1}}= \Iom \eps^{-\frac{(N-s_1p)p^*_{s_1}}{p^2}}|\bar{u}_{\de,R}|^{p^*_{s_1}}dx&=& \eps^{-N/p} \Iom |G_{\de,R}(U_\de(x))|^{p^*_{s_1}}dx\\
	&\leq& \eps^{-N/p}|G_{\de, R}'|_{L^{\infty}}^{p^*_{s_1}}\Iom |U_\de(x)|^{p^*_{s_1}}dx\\
	&\leq& \eps^{-N/p} \max\{m^{p^*_{s_1}}_{\de,R},1\}\Iom |U_\de(x)|^{p^*_{s_1}}dx,\\
	\Eea
where in the last line we have used \eqref{EQ3}. Next, applying \eqref{EQ6} to the last line, we have	
		\Bea
		|u_{\eps,R}|_{p^*_{s_1}}^{p^*_{s_1}} \leq C\eps^{-N/p}\int_{\Rn}|U_\de(x)|^{p^*_{s_1}}dx
		&\leq& C\eps^{-N/p}\frac{1}{\de^{\frac{(N-s_1p)p^*_{s_1}}{p}}}\int_{\Rn}|U(\frac{x}{\de})|^{p^*_{s_1}}dx\\
		&=& C\eps^{-N/p}\int_{\Rn}|U(y)|^{p^*_{s_1}} dy\\
		&=&C \eps^{-N/p}|U|_{p^*_{s_1}}^{p^*_{s_1}}\\
		&=&C\eps^{-N/p}(S_{s_1,p})^{N/s_1p},
	\Eea
	where, in the last line we have used \eqref{27-12-9}(ii).
			Hence, 
			we have,
			$$|u_{\eps,R}|_{p^*_{s_1}}^{p} \leq (C (S_{s_1,p})^{N/s_1p} \eps^{-N/p})^{\frac{p}{p^*_{s_1}}}=C(S_{s_1,p})^{\frac{N-s_1p}{s_1p}}\eps^{-\frac{N-s_1p}{p}}.$$ This completes the proof of the lemma.
 \end{proof}

 \begin{lemma}\label{LEMMA2}
 	Let $u_{\eps,R}$ be defined as above. Then the following estimates hold, that is, for \textbf{$t\geq 1,$}
 	\begin{equation}\label{e-1}
 	|u_{\eps,R}|_t^t \geq \begin{cases}
 	k\eps^{\frac{N(p-1)-t(N-s_1p)}{p}}+O(1), &\quad t>\frac{N(p-1)}{N-s_1p}\\
 	k|ln\, \eps|+O(1), &\quad t=\frac{N(p-1)}{N-s_1p}\\
 	O(1),&\quad t<\frac{N(p-1)}{N-s_1p}
 	\end{cases}
 	\end{equation}
 	and 
 	\begin{equation}\label{e-2}
 	\|u_{\eps,R}\|_{0,s_2,t}^t\leq 
 	 O(1),\quad 1\leq t<\frac{N(p-1)}{N-s_1}.
 	\end{equation}
 	In particular, we have
 	\begin{equation}\label{Est-5}
 	|u_{\eps,R}|_p^p \geq \begin{cases}
 	k\eps^{\frac{p^2s_1-N}{p}}+O(1), &\quad N>p^2s_1\\
 	k|ln\, \eps|+O(1), &\quad N=p^2s_1\\
 	O(1), &\quad N<p^2s_1
 	\end{cases}
 	\end{equation} 
 	where $k$ is a positive constant independent of $\eps.$ 
 	\end{lemma}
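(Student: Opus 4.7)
To prove (e-1) for the $L^t$-norm lower bound, I will exploit the explicit scaling form $u_{\eps,R}(x)=\eps^{-(N-s_1p)/p^2}\,\bar u_{\de,R}(|x|)$ together with the fact that $\bar u_{\de,R}(r)=U_\de(r)$ for $r\leq R$. Starting from the trivial lower bound $|u_{\eps,R}|_t^t\geq\int_{|x|<R}|u_{\eps,R}(x)|^t\,dx$, the substitution $y=x/\de$ with $U_\de(x)=\de^{-(N-s_1p)/p}U(x/\de)$ reduces matters to
\[
|u_{\eps,R}|_t^t\geq \eps^{-t(N-s_1p)/p^2}\,\de^{\,N-t(N-s_1p)/p}\!\int_{|y|<R/\de}\!|U(y)|^t\,dy.
\]
The asymptotic $U(y)\sim|y|^{-(N-s_1p)/(p-1)}$ from \eqref{1-12-17-1} yields the trichotomy for the integral on the right: uniformly bounded in $\de$ when $t>N(p-1)/(N-s_1p)$, of order $\log(R/\de)$ at equality, and of order $(R/\de)^{N-t(N-s_1p)/(p-1)}$ otherwise. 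Substituting $\de=\eps^{(p-1)/p}$ and combining the $\eps$-exponents produces the three cases in (e-1). The consequence \eqref{Est-5} follows by taking $t=p$ and noting that $N>p^2s_1\Longleftrightarrow p>N(p-1)/(N-s_1p)$, which places $t=p$ in the first regime.

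For (e-2), the identical scaling gives $\|u_{\eps,R}\|_{0,s_2,t}^t=\eps^{-t(N-s_1p)/p^2}\,\|\bar u_{\de,R}\|_{0,s_2,t}^t$. By \eqref{EQ3} together with the uniform bound \eqref{EQ6} on $m_{\de,R}$, the truncation $G_{\de,R}$ is Lipschitz with a constant independent of $\de$, so
\[
\bigl|\bar u_{\de,R}(x)-\bar u_{\de,R}(y)\bigr|=\bigl|G_{\de,R}(U_\de(x))-G_{\de,R}(U_\de(y))\bigr|\leq C\,|U_\de(x)-U_\de(y)|
\]
for every $x,y\in\R^N$. Since $\bar u_{\de,R}$ is supported in $B_{\theta R}$, I will split the double integral into an interior part over $B_{2\theta R}\times B_{2\theta R}$ and a tail part where one of the variables lies outside $B_{2\theta R}$. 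The tail part is handled by exploiting $\bar u_{\de,R}(y)=0$ for $|y|>\theta R$, together with the decay $\int_{|y|>2\theta R}|x-y|^{-N-s_2t}\,dy\lesssim R^{-s_2 t}$ for $|x|<\theta R$, and the $L^t$-control on $\bar u_{\de,R}$ supplied by part (a).

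The interior part, after applying the Lipschitz bound and the change of variables $y=x/\de$, becomes $\de^{N-t(N-s_1p)/p-s_2 t}$ times the Gagliardo integral of $U$ over $B_{2\theta R/\de}\times B_{2\theta R/\de}$. I will decompose this last integral by scales (both variables small, one small one large, and both large, with a further near-/far-diagonal split in the last case) and use only the pointwise decay \eqref{1-12-17-1} to show that it grows at worst like $A^{N-t[(N-s_1p)/(p-1)+s_2]}$ as $A=2\theta R/\de\to\infty$, provided $t<N(p-1)/(N-s_1p+(p-1)s_2)$. Because $s_2<s_1$ forces $N-s_1p+(p-1)s_2<N-s_1$, the hypothesis $t<N(p-1)/(N-s_1)$ is strictly stronger, placing us squarely in this growth regime. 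Combining the prefactor with the polynomial growth yields $\|\bar u_{\de,R}\|_{0,s_2,t}^t\lesssim \de^{t(N-s_1p)/[p(p-1)]}$; using $\de=\eps^{(p-1)/p}$ this cancels exactly against $\eps^{-t(N-s_1p)/p^2}$ to give the desired $O(1)$ bound.

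The main obstacle is the careful decomposition of the Gagliardo integral of $U$ on an expanding ball. Since $U\notin L^t$ for $t$ in the relevant range, the seminorm actually diverges, and its precise rate of divergence must be computed using only \eqref{1-12-17-1} rather than any gradient bound on $U$ (which is delicate for minimizers of the fractional $p$-Laplacian). The sharpness of this rate is exactly what makes the cancellation with the amplitude $\eps^{-t(N-s_1p)/p^2}$ produce a bounded quantity in the full range $t<N(p-1)/(N-s_1)$.
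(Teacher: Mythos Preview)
For \eqref{e-1} your argument coincides with the paper's: both restrict to $B_R$, where $\bar u_{\de,R}=U_\de$, change variables $y=x/\de$, and feed the asymptotic \eqref{1-12-17-1} into the resulting radial integral to obtain the trichotomy.

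For \eqref{e-2} you take a genuinely different route. The paper first dominates $\|u_{\eps,R}\|_{0,s_2,t}$ by $C\|u_{\eps,R}\|_{0,s_1,t}$ (raising the smoothness index from $s_2$ to $s_1$ while keeping the integrability exponent $t$), applies the uniform Lipschitz bound on $G_{\de,R}$, and then rescales \emph{globally} by $x\mapsto x/\de$ to arrive at
\[
\|u_{\eps,R}\|_{0,s_2,t}^t\ \leq\ C\,\eps^{\frac{1}{p^2}\bigl[N(p-1)(p-t)-(N-s_1p)t\bigr]}\,\|U\|_{0,s_1,t}^t,
\]
concluding because the exponent of $\eps$ is positive exactly when $t<N(p-1)/(N-s_1)$. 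This bypasses any truncated-ball analysis by reducing everything to the single scale-invariant constant $\|U\|_{0,s_1,t}$. Your scheme instead keeps $s_2$ throughout and tracks the growth of the Gagliardo integral of $U$ on $B_A\times B_A$ as $A=2\theta R/\de\to\infty$; this is more laborious but makes the role of the decay of $U$ explicit. (It is worth noting that the paper's shortcut has its own delicate point: the two-sided bound \eqref{1-12-17-1} shows that for $t<N(p-1)/(N-s_1)$ the full-space seminorm $\|U\|_{0,s_1,t}$ is in fact infinite, so that step also requires further justification.)

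Your argument, however, has a concrete gap. The growth rate $A^{\,N-t[(N-s_1p)/(p-1)+s_2]}$ that drives your exact cancellation cannot be extracted from the two-sided pointwise decay \eqref{1-12-17-1} alone. In the region $|z|\sim|w|\sim r$, the bound \eqref{1-12-17-1} gives only $|U(z)-U(w)|\lesssim r^{-\alpha}$ with $\alpha=(N-s_1p)/(p-1)$; this makes the near-diagonal piece diverge at $z=w$, and even after excising $|z-w|<1$ the annular contribution still grows like $A^{\,N-t\alpha}$, not $A^{\,N-t(\alpha+s_2)}$. Feeding $A^{\,N-t\alpha}$ into your prefactor $\de^{\,N-t(N-s_1p)/p-s_2t}$ and then multiplying by $\eps^{-t(N-s_1p)/p^2}$ leaves a residual factor $\eps^{-t(p-1)s_2/p}$, which blows up as $\eps\to 0$. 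What your decomposition actually needs is a derivative-type estimate $|U'(r)|\lesssim r^{-\alpha-1}$ for $r\geq 1$, so that $|U(z)-U(w)|\lesssim r^{-\alpha-1}|z-w|$ in the annulus; with this input every piece of your splitting does yield $A^{\,N-t(\alpha+s_2)}$ and the cancellation goes through exactly as you describe. Such a gradient bound is expected for the extremal $U$, but it is not contained in \eqref{1-12-17-1}, so the claim to rely ``only on the pointwise decay'' must be amended.
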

\begin{proof}
We have,
\Bea
|u_{\eps,R}|_t^t = \Iom |u_{\eps,R}(x)|^t dx=\int_{\Rn}|u_{\eps,R}(x)|^t dx
&\geq& \int_{B_R(0)}|u_{\eps,R}(x)|^tdx\\
&=& \eps^{-\frac{(N-s_1p)t}{p^2}}\int_{B_R(0)} \big(\bar{u}_{\de,R}(x)\big)^t dx\\
&=& \eps^{-\frac{(N-s_1p)t}{p^2}} \int_{B_R(0)}U_\de^t(x)dx\\
&=& \frac{\eps^{-\frac{(N-s_1p)t}{p^2}}}{\de^{\frac{(N-s_1p)t}{p}}}\int_{B_R(0)}U^t(\frac{x}{\de})
dx\\
&=& \eps^{-\frac{(N-s_1p)t}{p^2}}\de^{N-\frac{(N-s_1p)t}{p}}\int_{B_{\frac{R}{\de}}(0)}U^t(x)dx\\
&\geq& \eps^{\frac{N(p-1)}{p}-t\frac{(N-s_1p)}{p}}\int_1^{\frac{R}{\de}}U^t(r)r^{N-1}dr\\
	&\geq& c_1^t\eps^{\frac{N(p-1)}{p}-t\frac{(N-s_1p)}{p}}\int_1^{\frac{R}{\de}}
 \frac{r^{N-1}}{r^{\frac{N-s_1p}{p-1}t}}dr.
\Eea	
	If $t>\frac{N(p-1)}{N-s_1p},$ then we have
	
$$|u_{\eps,R}|_t^t \geq
\frac{c_1^t\eps^{\frac{N(p-1)}{p}-t\frac{(N-s_1p)}{p}}}{\frac{(N-s_1p)t}{p-1}-N}[1-\big(\frac{R}{\de}\big)^{N-\frac{(N-s_1p)t}{p-1}}].$$

 
Since $\de=\eps^\frac{p-1}{p}$, choosing $\eps>0$ small enough we can make $\de$ suitably small so that $1-\big(\frac{R}{\de}\big)^{N-\frac{(N-s_1p)t}{p-1}} \geq \frac{1}{2}.$
Therefore, $$|u_{\eps,R}|_t^t \geq k\eps^{\frac{N(p-1)}{p}-t\frac{(N-s_1p)}{p}},$$
where $k=\frac{c_1^p}{2\big(\frac{t(N-s_1p)}{p-1}-N\big)}.$\\
If $\frac{t(N-s_1p)}{p-1}=N,$ then
$$|u_{\eps,R}|_t^t \geq c_1^t\int_1^{\frac{R}{\de}}\frac{1}{r}dr
= c_1^t (ln\, R-ln\, \eps^{\frac{p-1}{p}})
\geq k |ln\, \eps|+O(1). $$

On the other hand for $\frac{t(N-s_1p)}{(p-1)}<N,$ we have
\Bea
|u_{\eps,R}|_t^t  &\geq& c_1^t \eps^{\frac{N(p-1)}{p}-\frac{t(N-s_1p)}{p}}
\frac{(R/\de)^{N-\frac{t(N-s_1p)}{p-1}}-1}{N-\frac{t(N-s_1p)}{p-1}}\\
&=& c_1^t \bigg[\frac{R^{N-\frac{t(N-s_1p)}{p-1}}-\eps^{\frac{N(p-1)-t(N-s_1p)}{p}}}
	{N-\frac{t(N-s_1p)}{p-1}} \bigg]\\
&\geq& O(1).
\Eea
 To see the proof of (\ref{e-2}), first we note that from Lemma \ref{Lemma4} we have $$u_{\eps,R}\in X_{0,s_1,p}(\Om)\subset X_{0,s_2,t}(\Om), \quad 1\leq t\leq p, \quad 0<s_2<s_1<1$$ and
 $$\|u_{\eps,R}\|_{0,s_2,t}\leq \|u_{\eps,R}\|_{0,s_1,t}.$$
 Therefore,
  \bea\label{EQ}
 \|u_{\eps,R}\|_{0,s_2,t}^t&\leq& \|u_{\eps,R}\|_{0,s_1,t}^t\no\\
 &=& \eps^{-\frac{(N-s_1p)t}{p^2}}\|\bar{u}_{\de,R}(.)\|_{0,s_1,t}^t\no\\
 &=&\eps^{-\frac{(N-s_1p)t}{p^2}}\|G_{\de,R}(U_\de(.))\|_{0,s_1,t}^t\no\\
 &=&\eps^{-\frac{(N-s_1p)t}{p^2}}\displaystyle \int_{\R^{2N}}\frac{|G_{\de,R}(U_\de(x))-G_{\de,R}(U_\de(y))|^t}{|x-y|^{N+s_1t}}dxdy\no\\
 &\leq& \eps^{-\frac{(N-s_1p)t}{p^2}}\displaystyle \int_{\R^{2N}}\frac{|G'_{\de,R}\big(U_\de(x)+\tau (U_\de(y)-U_\de(x)\big)|^t|U_\de(x)-U_\de(y)|^t}{|x-y|^{N+s_1t}}dxdy,\no\\
 \hfill
 \eea
for some $\tau\in(0,1)$.  In the last line, we have used mean value theorem.
Thus from (\ref{EQ3}), we obtain
 \begin{equation}\label{EQ7}
 G'_{\de,R}\big(U_\de(x)+\tau(U_\de(x)-U_\de(y)\big) \leq 1+\frac{c_2}{c_1}\theta^{\frac{N+s_1p}{p-1}}=c_3.
 \end{equation}
Substituting (\ref{EQ7}) into (\ref{EQ}) yields
 \Bea
 \|u_{\eps,R}\|_{0,s_2,t}^t &\leq& \eps^{-\frac{(N-s_1p)t}{p^2}}c_3^t \displaystyle\int_{\R^{2N}}\frac{|U_\de(x)-U_\de(y)|^t}{|x-y|^{N+s_1t}}dxdy\\
 &=&C\eps^{-\frac{(N-s_1p)t}{p^2}}\frac{\de^{N-s_1t}}{\de^{\frac{(N-s_1p)t}{p}}} \displaystyle\int_{\R^{2N}}\frac{|U(z)-U(w)|^t}{|z-w|^{N+s_1t}}dzdw\\
 &=&C\eps^{-\frac{(N-s_1p)t}{p^2}}\eps^{\frac{N(p-t)(p-1)}{p^2}}\|U\|_{0,s_1,t}^t\\
 &=&C\eps^{{\frac{1}{p^2}}\big(N(p-1)(p-t)-(N-s_1p)t\big)}\|U\|_{0,s_1,t}^t,
 \Eea
 where we have used that $\de=\eps^{\frac{p-1}{p}}.$ 
 Note that $t<\frac{N(p-1)}{N-s_1}$ which implies, $$N(p-1)(p-t)-(N-s_1p)t>0.$$
Therefore, we obtain
 \begin{equation}
 \|u_{\eps,R}\|_{0,s_2,t}^t \leq  O(1) \quad\mbox{for}\quad 1\leq t<\frac{N(p-1)}{N-s_1}.
 \end{equation}
 This completes the proof of Lemma \ref{LEMMA2}.

\end{proof}

 \begin{lemma}\lab{lem-2'}
Assume \eqref{1-12-17-3} holds.  Then, for any $\theta>0$, $C_{\theta} \in \big(0,\frac{s}{N}(S_{s_1,p})^{N/s_1p}\big)$, where $C_\theta$ is defined as in \eqref{1-12-17-2}.
 \end{lemma}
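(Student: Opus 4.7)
The lower bound $C_\theta > 0$ is immediate from Lemma \ref{lem-1}. Indeed, for any $u \in X_{0,s_1,p}(\Om) \setminus \{0\}$, the continuous map $t \mapsto \|tu\|_{0,s_1,p}$ hits the value $\rho$ of Lemma \ref{lem-1}(a), so $\sup_{t \ge 0} I_\theta(tu) \ge \beta > 0$. Taking the infimum over $u$ yields $C_\theta \geq \beta > 0$.

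For the strict upper bound, my plan is to use the truncated Sobolev minimizer $u_{\eps,R}$ defined in \eqref{24-12-1} as a test function, with $R$ fixed so that $B_{\theta R} \subset \Om$ and $\eps>0$ to be chosen small. Since $u_{\eps,R}\ge 0$ and vanishes outside $\Om$, I would write
\[
\phi_\eps(t) := I_\theta(t u_{\eps,R}) = \frac{t^p}{p}A_\eps + \frac{t^q}{q}B_\eps - \frac{\theta t^r}{r}D_\eps - \frac{t^{p^*_{s_1}}}{p^*_{s_1}}E_\eps,
\]
where $A_\eps := \|u_{\eps,R}\|_{0,s_1,p}^p$, $B_\eps := \|u_{\eps,R}\|_{0,s_2,q}^q$, $D_\eps := |u_{\eps,R}|_r^r$ and $E_\eps := |u_{\eps,R}|_{p^*_{s_1}}^{p^*_{s_1}}$. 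Applying Lemma \ref{lem-1} to the nonzero nonnegative function $u_{\eps,R}$ shows that $\phi_\eps$ has a positive maximum, attained at some $t_\eps > 0$.

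The next step is to substitute the asymptotics from Lemmas \ref{LEMMA1}, \ref{l:27-1} and \ref{LEMMA2}: namely $A_\eps = (S_{s_1,p})^{N/s_1p}\eps^{-(N-s_1p)/p} + O(1)$, $E_\eps = (S_{s_1,p})^{N/s_1p}\eps^{-N/p} - O(1)$, $D_\eps \geq k\,\eps^{[N(p-1)-r(N-s_1p)]/p}$ (valid because $r > p$ together with $N > p^2 s_1$ implies $r > N(p-1)/(N-s_1p)$), and $B_\eps = O(\eps^\mu)$ with $\mu := [N(p-1)(p-q) - q(N-s_1p)]/p^2 > 0$, as can be read off directly from the proof of \eqref{e-2}. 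The critical-point equation $\phi_\eps'(t_\eps) = 0$ combined with $A_\eps/E_\eps \sim \eps^{s_1}$ localizes $t_\eps$ at the scale $\eps^{(N-s_1p)/p^2}$. A standard two-term maximization gives
\[
\max_{t \ge 0}\Bigl[\frac{t^p A_\eps}{p} - \frac{t^{p^*_{s_1}} E_\eps}{p^*_{s_1}}\Bigr] = \frac{s_1}{N}\frac{A_\eps^{N/s_1p}}{E_\eps^{(N-s_1p)/s_1p}} \leq \frac{s_1}{N}(S_{s_1,p})^{N/s_1p} + C\eps^{(N-s_1p)/p},
\]
while at the scale of $t_\eps$ the remaining contributions satisfy $\frac{t_\eps^q B_\eps}{q} = O(\eps^{N(p-1)(p-q)/p^2})$ and $\frac{\theta t_\eps^r D_\eps}{r} \geq k_1\theta\,\eps^{(p-1)[Np-r(N-s_1p)]/p^2}$.

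To convert these pointwise estimates into an upper bound on $\sup_t \phi_\eps$, I would drop the negative $D_\eps$ term to form $\psi_\eps(t) := \phi_\eps(t) + \theta t^r D_\eps/r$, whose supremum is a small perturbation of the pure Sobolev one (the $B_\eps$ piece being a lower-order correction around $\tilde t_\eps := (A_\eps/E_\eps)^{1/(p^*_{s_1}-p)}$), and then use $\sup_t \phi_\eps \leq \sup_t \psi_\eps - \theta t_\eps^r D_\eps/r$. The main obstacle is the exponent arithmetic: the hypotheses $N > p^2 s_1$ and $r > \max\{p,\, p^*_{s_1} - q/(q-1)\}$ in \eqref{1-12-17-3} are precisely tailored so that the exponent $(p-1)[Np-r(N-s_1p)]/p^2$ of the negative $D_\eps$-term is strictly smaller than both $(N-s_1p)/p$ and $N(p-1)(p-q)/p^2$. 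Once this inequality is verified, for $\eps$ sufficiently small the negative $D_\eps$-contribution absorbs the two positive corrections, giving $\sup_{t \ge 0} I_\theta(tu_{\eps,R}) < \frac{s_1}{N}(S_{s_1,p})^{N/s_1p}$ and hence $C_\theta < \frac{s_1}{N}(S_{s_1,p})^{N/s_1p}$.
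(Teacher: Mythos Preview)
Your approach is essentially the paper's: both test with the truncated Sobolev extremal $u_{\eps,R}$ and show that for small $\eps$ the ray maximum $\sup_{t\ge0}I_\theta(t\,\cdot)$ lies strictly below $\frac{s_1}{N}(S_{s_1,p})^{N/s_1p}$ by comparing the exponents of the positive correction terms with that of the negative $r$-term. The only substantive difference is that the paper first normalizes, setting $v_\eps:=u_\eps/|u_\eps|_{p^*_{s_1}}$, so that $|v_\eps|_{p^*_{s_1}}=1$; this forces the maximizer $t_\eps$ to lie between two positive constants independent of $\eps$ (see \eqref{k2}--\eqref{k4}), and thereby separates the issue of locating $t_\eps$ from the asymptotic exponent bookkeeping. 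In your unnormalized version the assertion that $t_\eps$ sits ``at the scale $\eps^{(N-s_1p)/p^2}$'' --- which is exactly what you need in order for the inequality $\sup_t\phi_\eps\le\sup_t\psi_\eps-\theta t_\eps^r D_\eps/r$ to be useful --- is stated but not justified; the paper's normalization is precisely a clean device to sidestep this step. Your use of the sharper bound $B_\eps=O(\eps^\mu)$ read off from the proof of \eqref{e-2} (instead of just $O(1)$ as the paper uses in \eqref{k3}) gives a larger exponent for the $B$-correction and only makes the final comparison easier.
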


 \begin{proof}

As we have fixed $R$,  we take $u_\eps:=u_{\eps,R}.$ Define 
 \begin{equation}\label{v-eps-i}
 v_\eps(x)=\frac{u_\eps(x)}{|u_\eps|_{p^*_{s_1}}}.
 \end{equation}
 Thus $ |v_\eps|_{p^*_{s_1}}=1.$  Define
\Bea
g(t):&=&I_\theta(t v_\eps)\\
&=&\frac{t^p}{p}\|v_\eps\|_{0,s_1,p}^p+\frac{t^q}{q}\|v_\eps\|_{0,s_2,q}^q-\theta\frac{t^r}{r}|v_\eps|_r^r-\frac{t^{p^*_{s_1}}}{p^*_{s_1}}.
\Eea
Since $g$ is a continuous function and $g(0)=0$, $\lim_{t \to +\infty}g(t)=-\infty,$ there exists $t_\eps>0$ such that
$$\sup_{t \geq 0} I_\theta(tv_\eps)=I_\theta(t_\eps v_\eps).$$
Then, $t_\eps$ satisfies $g'(t_\eps)=0$ i.e., 
\begin{equation}\label{k1}
t_\eps^{p-1}\|v_\eps\|_{0,s_1,p}^p+t_\eps^{q-1}\|v_\eps\|_{0,s_2,q}^q-\theta t_\eps^{r-1}|v_\eps|^r_r-t_\eps^{p^*_{s_1}-1}=0.
\end{equation} Consequently,
\begin{equation}\label{k2}
\|v_\eps\|_{0,s_1,p}^p+t_\eps^{q-p}\|v_\eps\|_{0,s_2,q}^q>t_\eps^{p^*_{s_1}-p}.
\end{equation}
As $q<\frac{N(p-1)}{N-s_1}$, combining \eqref{Est-3}, Lemma \ref{l:27-1} and \eqref{e-2}  we have 
\begin{equation}\label{k3}
\|v_\eps\|_{0,s_1,p}^p \leq S_{s_1,p}+O(\eps^{\frac{N-s_1p}{p}}),\quad
\|v_\eps\|_{0,s_2,q}^q \leq \frac{\|u_\eps\|_{0,s_2,q}^q}{|u_\eps|_{p^*_{s_1}}^q}=O(\eps^{\frac{q(N-s_1p)}{p^2}}).
\end{equation}
Therefore, from \eqref{k2} and \eqref{k3}, we see that for any  $\tilde{\eps}>0$  small enough, there exists $t^0_{\tilde{\eps}}>0$ such that for all $\eps \leq \tilde{\eps}$ we have,
$t_{\eps} \leq t^0_{\tilde{\eps}}.$ Using (\ref{k1}) we have,
\begin{equation}\label{k4}
\|v_\eps\|_{0,s_1,p}^p<\theta t_\eps^{r-p}|v_\eps|_r^r+t_\eps^{p^*_{s_1}-p}.
\end{equation}
Using (\ref{k3})-(\ref{k4}) we say there exists $T>0$ such that for any $\eps>0,$ $t_\eps \geq T.$ \\
Let $h(t)=\frac{t^p}{p}\|v_\eps\|_{0,s_1,p}^p-\frac{t^{p^*_{s_1}}}{p^*_{s_1}}.$ 
Then $h(t)$ attains its maximum at $t_0=(\|v_\eps\|_{0,s_1,p}^p)^{\frac{1}{p^*_{s_1}-p}}.$ 
We note that, $N>p^2s_1>ps_1$  implies $N(p-1)<p(N-ps_1)$, Therefore, $\frac{N(p-1)}{N-ps_1}<p<r$. Hence, for $\eps \leq \tilde{\eps}$, applying Lemma \ref{LEMMA2} and Lemma \ref{l:27-1} we obtain,
\Bea
g(t_\eps)&=&h(t_\eps)+\frac{t_\eps^q}{q}\|v_\eps\|_{0,s_2,q}^q-\theta \frac{t_\eps^r}{r}|v_\eps|_r^r\\
&\leq& h(t_0)+\frac{(t^0_{\tilde{\eps}})^q}{q}\|v_\eps\|_{0,s_2,q}^q-\theta \frac{T^r}{r}|v_\eps|_r^r\\
&\leq& \frac{s_1}{N}(S_{s_1,p})^\frac{N}{s_1p}+c_1\eps^{\frac{(N-s_1p)}{p}}+c_2\eps^{\frac{q(N-s_1p)}{p^2}}-c_3\eps^{\frac{(p-1)}{p}(N-\frac{r(N-s_1p)}{p})}, \Eea
with $c_1,c_2,c_3>0$ (independent of $\eps.$) As $$\frac{N-s_1p}{p}>\frac{q(N-s_1p)}{p^2}>\frac{(p-1)}{p}\bigg(N-\frac{r(N-s_1p)}{p}\bigg)>0,$$ choose $\eps>0$ small so that $g(t_\eps)=\sup_{t \geq 0}I_\theta(tv_\eps)<\frac{s_1}{N}(S_{s_1,p})^\frac{N}{s_1p}.$

Hence, $C_\theta \in \big(0,\frac{s_1}{N}(S_{s_1,p})^\frac{N}{s_1p}\big)$ for any $\theta>0.$ 
\end{proof}

\begin{lemma}\label{lem-3}
Assume \eqref{1-12-17-3} holds. Then for any $\theta>0,$  $c_\theta=C_\theta$, where $c_\theta$ and $C_\theta$ are defined as in \eqref{24th Jan} and \eqref{1-12-17-2} respectively.
\end{lemma}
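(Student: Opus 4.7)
The plan is to identify the Nehari infimum with the mountain-pass minimax level by a standard fibering-map analysis. For $u\in X_{0,s_1,p}(\Om)\setminus\{0\}$ I will set $g_u(t):=I_\theta(tu)$ for $t\geq 0$ and compute from \eqref{I-theta} that
$$g_u'(t)=t^{p-1}\|u\|_{0,s_1,p}^p+t^{q-1}\|u\|_{0,s_2,q}^q-\theta t^{r-1}|u^+|_r^r-t^{p^*_{s_1}-1}|u^+|_{p^*_{s_1}}^{p^*_{s_1}},$$
so that $u\in N_\theta$ if and only if $g_u'(1)=0$. I will first dispose of the case $u^+\equiv 0$: there $g_u$ is strictly increasing on $[0,\infty)$ and $\sup_{t\geq 0}g_u(t)=+\infty$, so such $u$ are irrelevant to $C_\theta$ and can be excluded.

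The main step is to show, for $u^+\not\equiv 0$, that $g_u$ has a unique critical point $t_u>0$ on $(0,\infty)$ and that it is the global maximum of $g_u$ on $[0,\infty)$. Dividing $g_u'(t)=0$ by $t^{p-1}$ recasts it as $A(t)=B(t)$, with
$$A(t):=\|u\|_{0,s_1,p}^p+t^{q-p}\|u\|_{0,s_2,q}^q,\qquad B(t):=\theta t^{r-p}|u^+|_r^r+t^{p^*_{s_1}-p}|u^+|_{p^*_{s_1}}^{p^*_{s_1}}.$$
Using the ordering $q<p<r<p^*_{s_1}$ from \eqref{1-12-17-3}, I will observe that $A$ is strictly decreasing with $A(0^+)=+\infty$ (here $\|u\|_{0,s_2,q}>0$ for $u\ne 0$ since $u\equiv 0$ outside $\Om$) and $A(\infty)=\|u\|_{0,s_1,p}^p>0$, while $B$ is strictly increasing with $B(0^+)=0$ and $B(\infty)=+\infty$. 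The intermediate value theorem then yields a unique $t_u>0$ with $A(t_u)=B(t_u)$, so that $t_uu\in N_\theta$; the sign analysis $g_u'>0$ on $(0,t_u)$ and $g_u'<0$ on $(t_u,\infty)$ gives $I_\theta(t_uu)=\sup_{t\geq 0}I_\theta(tu)$.

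The lemma then follows by a two-sided comparison. For every admissible $u$ one has $c_\theta\leq I_\theta(t_uu)=\sup_{t\geq 0}I_\theta(tu)$, and taking the infimum in $u$ yields $c_\theta\leq C_\theta$. Conversely, if $u\in N_\theta$, then $g_u'(1)=0$, and uniqueness of the critical point forces $t_u=1$, whence $I_\theta(u)=\sup_{t\geq 0}I_\theta(tu)\geq C_\theta$; taking the infimum over $N_\theta$ gives $c_\theta\geq C_\theta$. The only delicate point is the strict monotonicity of the auxiliary functions $A$ and $B$; this rests crucially on the ordering $q<p<r<p^*_{s_1}$, and no further structural information about $I_\theta$ is used.
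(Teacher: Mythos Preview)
Your proof is correct and is essentially a self-contained fibering-map argument. The paper argues differently: for the inequality $c_\theta\leq C_\theta$ it invokes Lemmas~\ref{lem-1} and~\ref{lem-2} (mountain-pass geometry and the (PS)$_c$ condition below the compactness threshold, with $C_\theta$ in that range by Lemma~\ref{lem-2'}) to produce an actual critical point $u_\theta$ with $I_\theta(u_\theta)=C_\theta$ and $I'_\theta(u_\theta)=0$, so that $u_\theta\in N_\theta$ and hence $c_\theta\leq C_\theta$. For $c_\theta\geq C_\theta$ the paper takes $u\in N_\theta$, sets $f(t)=I_\theta(tu)$, and checks $f'(1)=0$, $f''(1)<0$ to conclude $\max_{t\geq 0}I_\theta(tu)=I_\theta(u)$.

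Your route avoids the mountain-pass existence machinery entirely: uniqueness of the positive root of $A(t)=B(t)$ gives the bijection $u\mapsto t_uu$ from $\{u:u^+\not\equiv 0\}$ onto $N_\theta$ and identifies the fibering maximum, so both inequalities follow by direct comparison. This is more elementary and, arguably, cleaner than the paper's use of a second-derivative test (which by itself only yields a local maximum). The paper's approach, on the other hand, simultaneously delivers the existence of a minimizer for $c_\theta$, which is used elsewhere.
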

\begin{proof}
Using lemmas \ref{lem-1} and \ref{lem-2}	we conclude that, for any $\theta>0$ there exists $u_\theta \in X_{0,s_1,p}(\Om)$ such that $I_\theta(u_\theta)=C_\theta$ and $I'_\theta(u_\theta)=0. $ Also for any $u \in N_\theta$, we have
\begin{equation}\label{w1}
0=\<I'_\theta(u),u\>= \|u\|_{0,s_1,p}^p+\|u\|_{0,s_2,q}^q-\theta|u^+|_r^r-|u^+|_{p^*_{s_1}}^{p^*_{s_1}}.
\end{equation} 
Therefore, if we define $f(t):= I_\theta(tu)$, where $u\in N_{\theta}$, then  a straight forward computation yields that $f'(1)=0$ and $f''(1)<0$, i.e, 
\begin{equation}\label{w2}
\max_{t \geq 0}I_\theta(tu)=I_\theta(u).
\end{equation}

Observe that, from the definition of $C_\theta$ it follows $C_\theta\leq \max_{t \geq 0}I_\theta(tu) $. Consequently, we obtain $I_\theta(u) \geq C_\theta $ for all $u \in N_\theta.$  Hence, 
\begin{equation}\label{w4}
c_\theta=\inf_{u \in N_\theta}I_\theta(u) \geq C_\theta.
\end{equation}
On the other hand, $u_{\theta} \in N_\theta$ and $I_\theta(u_\theta)=C_\theta$ implies $C_\theta \geq c_\theta$. Hence $c_\theta=C_\theta.$ 
\end{proof}

From the definition of $C_\theta,$ it is easy to see that
$$C_{\theta_1} \leq C_{\theta_2} \quad\mbox{if}\quad \theta_2 \leq \theta_1. $$
Therefore, using Lemma \ref{lem-3}, we also have
$$c_{\theta_1} \leq c_{\theta_2} \quad\mbox{if}\quad \theta_2 \leq \theta_1, $$
which implies $c_\theta$ is non-increasing in $\theta.$ Therefore, for any $\la>0,$ there exists $\rho=\rho(\la)$ (depending on the Mountain Pass Geometry) such that
$0 <\rho \leq c_\theta \leq c_0$ for all $\theta \in [0,\la],$ where $c_0$ is the MP level associated to the functional
$$I_0(u)=\frac{1}{p}\|u\|_{0,s_1,p}^p+\frac{1}{q}\|u\|_{0,s_2,q}^q-\frac{1}{p^*_{s_1}}|u^+|_{p^*_{s_1}}^{p^*_{s_1}}.$$
\begin{lemma}\label{lem-4}
$c_0=\frac{s_1}{N}(S_{s_1,p})^{N/s_1p}.$ 
\end{lemma}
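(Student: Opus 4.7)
The idea is to sandwich $c_0$ between $\frac{s_1}{N}(S_{s_1,p})^{N/s_1p}$ from above and below. By Lemma \ref{lem-3} applied with $\theta=0$ (whose proof uses only that $I_0$ has the Mountain Pass geometry and the Nehari/fibering structure), we may work with
$$c_0 = C_0 = \inf_{u \in X_{0,s_1,p}(\Om)\setminus\{0\}} \sup_{t\geq 0} I_0(tu),$$
so it suffices to bound $\sup_{t\geq 0} I_0(tu)$ from below for every admissible $u$, and from above along a single well-chosen sequence.

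For the \emph{lower bound}, fix $u \in X_{0,s_1,p}(\Om)$ with $u^+\nequiv 0$ (otherwise $\sup_t I_0(tu)=+\infty$). Dropping the nonnegative $q$-term yields, for all $t\geq 0$,
$$I_0(tu) \geq \frac{t^p}{p}\|u\|_{0,s_1,p}^p - \frac{t^{p^*_{s_1}}}{p^*_{s_1}}|u^+|_{p^*_{s_1}}^{p^*_{s_1}}.$$
Maximising the right-hand side in $t$ gives the maximiser $t_0^{p^*_{s_1}-p}=\|u\|_{0,s_1,p}^p/|u^+|_{p^*_{s_1}}^{p^*_{s_1}}$ and maximal value
$$\frac{s_1}{N}\left(\frac{\|u\|_{0,s_1,p}^p}{|u^+|_{p^*_{s_1}}^p}\right)^{N/(s_1 p)},$$
using $\tfrac{1}{p}-\tfrac{1}{p^*_{s_1}}=\tfrac{s_1}{N}$ and $\tfrac{p^*_{s_1}}{p^*_{s_1}-p}=\tfrac{N}{s_1 p}$. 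Since $\|u^+\|_{0,s_1,p}\leq \|u\|_{0,s_1,p}$ and $S_{s_1,p}|u^+|_{p^*_{s_1}}^p\leq \|u^+\|_{0,s_1,p}^p$, the ratio is bounded below by $S_{s_1,p}$, giving $\sup_t I_0(tu)\geq \frac{s_1}{N}(S_{s_1,p})^{N/(s_1 p)}$.

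For the \emph{upper bound}, I would re-run the computation of Lemma \ref{lem-2'} with $\theta=0$, applied to $v_\eps=u_{\eps,R}/|u_{\eps,R}|_{p^*_{s_1}}$. Since $u_{\eps,R}\geq 0$, the estimates \eqref{Est-3}, Lemma \ref{l:27-1} and \eqref{e-2} together with the computation of $\max_{t\ge 0}\bigl(\frac{t^p}{p}\|v_\eps\|_{0,s_1,p}^p-\frac{t^{p^*_{s_1}}}{p^*_{s_1}}\bigr)$ yield
$$\sup_{t\geq 0} I_0(tv_\eps) \leq \frac{s_1}{N}(S_{s_1,p})^{N/s_1 p}+ c_1\eps^{(N-s_1p)/p} + c_2\eps^{q(N-s_1p)/p^2},$$
with $c_1,c_2>0$ independent of $\eps$. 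Letting $\eps\to 0^+$ and taking the infimum over $u$ in the definition of $C_0$ gives $c_0=C_0\leq \frac{s_1}{N}(S_{s_1,p})^{N/s_1 p}$, completing the equality.

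No serious obstacle is expected: the lower bound is essentially a one-line use of Sobolev and the elementary optimisation of $At^p-Bt^{p^*_{s_1}}$, while the upper bound reuses the asymptotics for the truncated minimiser $u_{\eps,R}$ already established in Lemmas \ref{LEMMA1}--\ref{LEMMA2}, with the subcritical $q$-contribution becoming negligible precisely because of the assumption $q<N(p-1)/(N-s_1)$ which forced \eqref{e-2} to be $O(1)$ (and hence $\|v_\eps\|_{0,s_2,q}^q\to 0$ by Lemma \ref{l:27-1}).
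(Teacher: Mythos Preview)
Your argument is correct, and for the upper bound it is essentially the paper's own computation with $v_\eps$. Your lower bound, however, takes a genuinely different route. You drop the nonnegative $q$-term and optimise $t\mapsto \tfrac{t^p}{p}A-\tfrac{t^{p^*_{s_1}}}{p^*_{s_1}}B$ directly along each fibre, then invoke Sobolev on the ratio $\|u\|_{0,s_1,p}^p/|u^+|_{p^*_{s_1}}^p$. The paper instead picks a Palais--Smale sequence $\{u_n\}$ at level $c_0$, proves boundedness, passes to limits $\|u_n\|_{0,s_1,p}^p\to a$, $\|u_n\|_{0,s_2,q}^q\to b$, $|u_n^+|_{p^*_{s_1}}^{p^*_{s_1}}\to a+b$ (from $\langle I_0'(u_n),u_n\rangle\to 0$), argues that $u_n^-\to 0$, and then applies Sobolev to $u_n/|u_n|_{p^*_{s_1}}$ to obtain $S_{s_1,p}\le a^{s_1p/N}$, whence $c_0=\tfrac{s_1}{N}a+(\tfrac{1}{q}-\tfrac{1}{p^*_{s_1}})b\ge \tfrac{s_1}{N}S_{s_1,p}^{N/(s_1p)}$. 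Your approach is considerably shorter and avoids the PS machinery entirely; the paper's route has the side benefit of establishing the asymptotic structure of PS sequences at level $c_0$, which it does not actually use afterwards.

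One small caveat: Lemma~\ref{lem-3} is stated and proved only for $\theta>0$, and its proof of the direction $C_\theta\ge c_\theta$ relies on the \emph{existence} of a critical point $u_\theta$ with $I_\theta(u_\theta)=C_\theta$ (obtained via the PS condition of Lemma~\ref{lem-2}), which is precisely what fails at $\theta=0$. So your parenthetical ``whose proof uses only \ldots\ the Nehari/fibering structure'' is not quite accurate. This is harmless for your argument, since you only need $c_0\ge C_0$ (which \emph{is} pure fibering: for $u\in N_0$, $I_0(u)=\max_{t\ge 0}I_0(tu)\ge C_0$) for the lower bound, and for the upper bound the maximiser $t_\eps v_\eps$ lies in $N_0$ so that $c_0\le I_0(t_\eps v_\eps)=\sup_{t\ge 0}I_0(tv_\eps)$ directly. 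Just rephrase the opening line accordingly.
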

\begin{proof}
	Recall $v_\eps(x)=\frac{u_\eps(x)}{|u_\eps|_{p^*_{s_1}}}$ where $u_\eps=u_{\eps, R}$ is defined as in \eqref{24-12-1}. Arguing as in Lemma \ref{lem-2'}, there exists $t_\eps>0$ such that $\frac{d}{dt}I_0(tv_\eps)|_{t=t_\eps}=0,$ that is,
	\begin{equation}\label{w6}
	t_\eps^{p-1}\|v_\eps\|_{0,s_1,p}^p+t_\eps^{q-1}\|v_\eps\|_{0,s_2,q}^q=t_\eps^{p^*_{s_1}-1}.
	\end{equation}
	Hence, $t_\eps^{p^*_{s_1}-p}\geq\|v_\eps\|_{0,s_1,p}^p.$
	Also, $t_\eps$ is bounded. Using $1<q<p<p^*_{s_1},$ (\ref{w6}) and (\ref{k3}) we have,
	$$t_\eps=\big(S_{s_1,p}+O(\eps^{\frac{q(N-s_1p)}{p^2}}) \big)^{\frac{1}{p^*_{s_1}-p}}.$$
	Therefore,
	\bea\label{w7}
	c_0 \leq I_0(t_\eps v_\eps)&=&\frac{1}{p}\big(S_{s_1,p}+O(\eps^{\frac{q(N-s_1p)}{p^2}})\big)^{\frac{p}{p^*_{s_1}-p}}\big(S_{s_1,p}+O(\eps^{\frac{(N-sp)}{p}})\big)\no\\
	&&+\frac{1}{q} \big(S_{s_1,p}+O(\eps^{\frac{q(N-s_1p)}{p^2}})\big)^{\frac{q}{p^*_{s_1}-p}}O(\eps^{\frac{q(N-s_1p)}{p^2}})\no\\
	&&-\frac{1}{p^*_{s_1}}\big(S_{s_1,p}+O(\eps^{\frac{q(N-s_1p)}{p^2}})\big)^{\frac{p^*_{s_1}}{p^*_{s_1}-p}}\no\\
&=&	\frac{1}{p}\big((S_{s_1,p})^\frac{N-s_1p}{s_1p}+O(\eps^{\frac{q(N-s_1p)}{p^2}})\big)\big(S_{s_1,p}+O(\eps^{\frac{(N-s_1p)}{p}})\big)\no\\
&&+\frac{1}{q} \big((S_{s_1,p})^\frac{q(N-s_1p)}{p^2}+O(\eps^{\frac{q(N-s_1p)}{p^2}})\big)O(\eps^{\frac{q(N-s_1p)}{p^2}})\no\\
&&-\frac{1}{p^*_{s_1}}\big((S_{s_1,p})^\frac{N}{s_1p}+O(\eps^{\frac{q(N-s_1p)}{p^2}})\big)\no\\
&=&\big(\frac{1}{p}-\frac{1}{p^*_{s_1}}\big)(S_{s_1,p})^\frac{N}{s_1p}+O(\eps^{\frac{q(N-s_{s_1}p)}{p^2}})+O(\eps^{\frac{N-s_{s_1}p}{p}})\no\\
	&\to& \frac{s_1}{N}(S_{s_1,p})^{N/s_1p}, \quad\text{as}\quad \eps \to 0.
	\eea
	
	Let $\{u_n\}_{n \geq 1} \subset X_{0,s_1,p}(\Om)$ such that $I_0(u_n) \to c_0$ and $I'_0(u_n) \to 0$ in $(X_{0,s_1,p})'$ as $n \to \infty.$
	Arguing as in Claim 1 of  Lemma \ref{lemma3}, it follows $\{\|u_n\|_{0,s_1,p}\}_{n \geq 1}$ is bounded.  Moreover,  as in \eqref{+8} w.l.g up to a subsequence 
  we can assume  $$\|u_n\|_{0,s_1,p}^p=a+o(1),\quad \|u_n\|_{0,s_2,q}^q=b+o(1),\quad |u_n^+|_{p^*_{s_1}}^{p^*_{s_1}}=a+b+o(1).$$
 Since $2 \leq q<p$, estimating $\<I_0'(u_n),u_n^-\>$ as in the proof of Theorem \ref{thm1}, we obtain $\|u_n^-\|_{0,s_1,p}^p \to 0$ and $ \|u_n^-\|_{0,s_2,q}^q \to 0$ as  $n \to \infty.$ 

 \vspace{2mm}
 
 Therefore, we may assume $u_n \geq 0.$ Hence, $|u_n|_{p^*_{s_1}}^{p^*_{s_1}}=a+b+o(1).$ 
 Set $v_n(x)=\frac{u_n(x)}{|u_n|_{p^*_{s_1}}}.$ Then $|v_n|_{p^*_{s_1}}=1$ and
 $$S_{s_1,p} \leq \|v_n\|_{0,s_1,p}^p=\frac{a+o(1)}{\big(a+b+o(1)\big)^{p/{p^*_{s_1}}}}\leq (a+o(1))^{s_1p/N}.$$
 Hence, we have,
 \bea\label{H4}
 \frac{s_1}{N}(S_{s_1,p})^{N/s_1p}&\leq& \frac{s_1(a+o(1))}{N}\no\\
 &\leq& \frac{s_1(a+o(1))}{N}+\bigg(\frac{1}{q}-\frac{1}{p^*_{s_1}}\bigg)(b+o(1))\no\\
 &\to& c_0,\quad\text{as}\quad n \to \infty.
 \eea
Combining (\ref{w7}) and (\ref{H4}), we have
 $c_0=\frac{s_1}{N}(S_{s_1,p})^{N/s_1p}.$ Hence, proved.
 \end{proof}
 \textit{Remark}:
 \begin{itemize}
 	\item [(i)]
 	For any bounded domain $\Omega \subset \Rn,$ the MP level of the functionals
 	$$I_{0,\Om}(u)=\frac{1}{p}\|u\|_{0,s_1,p}^p+\frac{1}{q}\|u\|_{0,s_2,q}^q-\frac{1}{p^*_{s_1}}|u^+|_{p^*_{s_1}}^{p^*_{s_1}}$$ and
 	$$\tilde{I}_{0,\Om}(u)=\frac{1}{p}\|u\|_{0,s_1,p}^p+\frac{1}{q}\|u\|_{0,s_2,q}^q-\frac{1}{p^*_{s_1}}|u|_{p^*_{s_1}}^{p^*_{s_1}}$$ is $\frac{s_1}{N}(S_{s_1,p})^{\frac{N}{s_1p}},$ so the MP level is independent of $\Om.$ 
 	\item[(ii)]
Using the proof of Lemma \ref{lem-4}, we may assume that all the PS sequence of $I_\theta$ are non-negative. 
 \end{itemize}
	\begin{lemma}\label{lem-5}
		Let $\theta_n \to 0$ as $n \to \infty.$ Then $c_{\theta_n} \to c_0$ as $n \to \infty.$
	\end{lemma}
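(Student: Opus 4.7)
The plan is to bracket $\lim_n c_{\theta_n}$ by $c_0$ on both sides. The inequality $\limsup_n c_{\theta_n}\le c_0$ is immediate from the monotonicity $c_\theta\le c_{\theta'}$ for $\theta'\le\theta$ recorded just before Lemma \ref{lem-4}. For the reverse inequality, I invoke the mountain pass theorem (applicable by Lemmas \ref{lem-1}, \ref{lem-2}, \ref{lem-2'}) together with $c_\theta=C_\theta$ (Lemma \ref{lem-3}) to produce, for each $n$, a nonnegative solution $u_n\in X_{0,s_1,p}(\Om)$ of $(\tilde P)$ with $I_{\theta_n}(u_n)=c_{\theta_n}$ and $I'_{\theta_n}(u_n)=0$; nonnegativity is justified by the remark after Lemma \ref{lem-4}. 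Boundedness of $\{u_n\}$ in $X_{0,s_1,p}(\Om)$ follows at once from the Nehari identity
$$c_{\theta_n}=I_{\theta_n}(u_n)-\frac{1}{r}\<I'_{\theta_n}(u_n),u_n\>=\Big(\frac{1}{p}-\frac{1}{r}\Big)\|u_n\|_{0,s_1,p}^p+\Big(\frac{1}{q}-\frac{1}{r}\Big)\|u_n\|_{0,s_2,q}^q+\Big(\frac{1}{r}-\frac{1}{p^*_{s_1}}\Big)|u_n|_{p^*_{s_1}}^{p^*_{s_1}},$$
combined with $q<p<r<p^*_{s_1}$ and $c_{\theta_n}\le c_0$. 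Up to a subsequence $u_n\deb u$ weakly in $X_{0,s_1,p}$, $u_n\to u$ in $L^\ga(\Om)$ for $\ga<p^*_{s_1}$, and pointwise a.e.; arguing exactly as in the proof of Lemma \ref{lemma3}, together with $\theta_n|u_n|_r^r\to 0$ (which holds since $\{u_n\}$ is bounded in $L^r(\Om)$ and $\theta_n\to 0$), the weak limit $u$ satisfies $I'_0(u)=0$.

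Setting $v_n:=u_n-u$, I apply the Brezis-Lieb lemma simultaneously to the Gagliardo seminorms and to $|\cdot|_{p^*_{s_1}}$: along a further subsequence, let $a:=\lim\|v_n\|_{0,s_1,p}^p$, $b:=\lim\|v_n\|_{0,s_2,q}^q$ and $d:=\lim|v_n|_{p^*_{s_1}}^{p^*_{s_1}}$. Subtracting $\<I'_0(u),u\>=0$ from $\<I'_{\theta_n}(u_n),u_n\>=0$ and passing to the limit yields $a+b=d$, and Sobolev gives $S_{s_1,p}\,d^{p/p^*_{s_1}}\le a$, so either $a=0$ or $a\ge (S_{s_1,p})^{N/(s_1p)}$. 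A second application of Brezis-Lieb in $I_{\theta_n}(u_n)$ together with $\theta_n|u_n|_r^r\to 0$ gives
$$\lim_n c_{\theta_n}=I_0(u)+\frac{a}{p}+\frac{b}{q}-\frac{d}{p^*_{s_1}}=I_0(u)+\frac{s_1}{N}a+\Big(\frac{1}{q}-\frac{1}{p^*_{s_1}}\Big)b \ge I_0(u)+\frac{s_1}{N}a,$$
using $a+b=d$ and $\tfrac{1}{p}-\tfrac{1}{p^*_{s_1}}=\tfrac{s_1}{N}$.

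Two cases follow. If $a\ge (S_{s_1,p})^{N/(s_1p)}$, then $\lim_n c_{\theta_n}\ge I_0(u)+c_0\ge c_0$ (by Lemma \ref{lem-4}), because any weak solution of $I'_0(u)=0$ has $I_0(u)=(\tfrac{1}{p}-\tfrac{1}{p^*_{s_1}})\|u\|_{0,s_1,p}^p+(\tfrac{1}{q}-\tfrac{1}{p^*_{s_1}})\|u\|_{0,s_2,q}^q\ge 0$. If $a=0$, then Sobolev and $a+b=d$ force $b=d=0$, so $u_n\to u$ strongly and $\lim_n c_{\theta_n}=I_0(u)$; since $c_\theta$ is non-increasing in $\theta$ and $c_{\theta_0}>0$ for any fixed $\theta_0>0$ by the MP geometry, we have $c_{\theta_n}\ge c_{\theta_0}>0$ for all large $n$, hence $u\not\equiv 0$, and the MP characterization
$$c_0=\inf_{w\ne 0}\sup_{t\ge 0}I_0(tw)\le \sup_{t\ge 0}I_0(tu)=I_0(u),$$
the last equality holding since $\langle I'_0(u),u\rangle=0$ forces the unique positive maximum of $t\mapsto I_0(tu)$ at $t=1$, yields $I_0(u)\ge c_0$. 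In either case $\liminf_n c_{\theta_n}\ge c_0$, which together with the first step completes the proof.

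The main obstacle is the dichotomy between compactness and concentration of $\{u_n\}$: one cannot exclude the concentrating scenario a priori, but the Brezis-Lieb decomposition forces at least $\tfrac{s_1}{N}(S_{s_1,p})^{N/(s_1p)}=c_0$ units of energy from the concentrating part; in the complementary compact scenario, the fixed positive lower bound on $c_{\theta_n}$ inherited from the mountain-pass geometry prevents the weak limit from vanishing, so the mountain-pass characterization of $c_0$ takes over.
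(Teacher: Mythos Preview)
Your proof is correct, but it takes a substantially heavier route than the paper's. The paper argues by \emph{Nehari projection}: given the nonnegative minimizers $u_n$ of $c_{\theta_n}$, it picks $t_n>0$ with $t_n u_n\in N_0$ and writes
\[
c_0 \le I_0(t_n u_n)=I_{\theta_n}(t_n u_n)+\frac{\theta_n t_n^r}{r}|u_n|_r^r \le c_{\theta_n}+\frac{\theta_n t_n^r}{r}|u_n|_r^r,
\]
using that $t\mapsto I_{\theta_n}(tu_n)$ is maximized at $t=1$. Boundedness of $\{u_n\}$ (which you also prove) and a short contradiction argument give boundedness of $\{t_n\}$, so the correction term vanishes and $c_0\le\liminf c_{\theta_n}$ follows in two lines. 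Your approach instead passes to the weak limit, runs a Brezis--Lieb decomposition, and splits into concentration/compactness cases; this is sound (your verification that $t\mapsto I_0(tu)$ has its unique maximum at $t=1$ when $\langle I_0'(u),u\rangle=0$ is the key point in Case~2), but it reproves machinery already encoded in the Nehari structure. The payoff of the paper's argument is brevity; the payoff of yours is that it makes explicit why no energy is lost, and the same template would extend to situations where the fibering map for $I_\theta$ is less well behaved.
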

 \begin{proof}
 	From the definition of $c_\theta,c_0$ we note that
 	\begin{equation}\label{w8}
 	c_{\theta_n} \leq c_0 \quad\forall\quad n\in \N.
 	\end{equation}
 	Let $\{u_n\}_{n \geq 1} \subset X_{0,s_1,p}(\Om)$ such that $u_n \geq 0$ and  satisfies $I_{\theta_n}(u_n)=c_{\theta_n}, I'_{\theta_n}(u_n)=0$ and let $\{t_n\}_{n \geq 1} \subset \R$  such that $t_n u_n \in N_0.$
 	Hence, $c_0 \leq I_0(t_nu_n)=I_{\theta_n}(t_nu_n)+\frac{\theta_n t_n^r}{r}|u_n|_r^r.$ Consequently, 
 	\begin{equation}\label{w9}
 	c_0 \leq c_{\theta_n}+\frac{\theta_nt_n^r}{r}|u_n|_r^r.
 	\end{equation}
 	As $c_{\theta_n} \leq c_0,$ we can show as before $\{\|u_n\|_{0,s_1,p}\}_{n \geq 1}$ is bounded. We also claim that $\{t_n\}_{n \geq 1}$ is bounded. Suppose not. Then up to a subsequence, $t_n \to \infty.$ Note that, $t_nu_n \in N_0$ implies
 	\be\lab{26-12-1}\|u_n\|_{0,s_1,p}^p+t_n^{q-p}\|u_n\|^q_{0,s_2,q}=t_n^{p^*_{s_1}-p}|u_n|_{p^*_{s_1}}^{p^*_{s_1}}.\ee
Since $q<p<p^*_{s_1}$ and $\max\{\|u_n\|_{0,s_2,q}, |u_n|_{p^*_{s_1}}\}   \leq C\|u_n\|_{0,s,p}$ ,  we obtain $\text{RHS of \eqref{26-12-1}}\to\infty$ but LHS remains bounded. Hence the claim follows.

	By the above claim and (\ref{w9}), we have 
 	$$c_0 \leq \liminf_{n \to \infty}c_{\theta_n} \leq \limsup_{n \to \infty}c_{\theta_n} \leq c_0.$$
 	Hence, $c_0=\lim_{n \to \infty} c_{\theta_n}.$ This completes the proof. 
 	 \end{proof}

Since $\Om \subset \Rn$ is a smooth domain, there exists $\delta>0$ such that 
 	$$\Om_\de^+:=\{x \in \Rn \,|\, \text{dist}(x,\Om)<\de \}$$ and
 	$$\Om_\de^-:=\{x \in \Rn \,|\, \text{dist}(x,\Om)>\de \}$$ are homotopically equivalent to $\Om.$ Without loss of generality, we may assume that\\ 
 	$B_\de=B(0,\de) \subset \Om.$ Define, 
 	$$X_{0,s_1,p}^{rad}(B_\de):=\{u\in X_{0,s_1,p}(B_\de) \,|\, u \,\,\mbox{is radial}\}.$$
 	Let $N_{\theta,B_\de}:=\inf\bigg\{u \in X_{0,s_1,p}^{rad}(B_\de)\setminus \{0\}|\<I'_{\theta,B_\de}(u),u\>=0\bigg\}$ where\\
 	$$I_{\theta,B_\de}(u)=\frac{1}{p}\|u\|_{0,s_1,p}^p+\frac{1}{q}\|u\|_{0,s_2,q}^q-\frac{\theta}{r}\int_{B_\de}|u^+|^r\, dx-\frac{1}{p^*_{s_1}}\int_{B_\de}|u^+|^{p^*_{s_1}}\, dx.$$
Denote $n_\theta=\inf_{u \in N_{\theta,B_\de}}I_{\theta,B_\de}(u).$ We note that $n_\theta$ is non-increasing in $\theta.$ Let us denote the MP level for $I_{\theta,B_\de}$ on $X_{0,s,p}^{rad}(B_\de)$ by $\tilde{n_\theta}.$ We also observe that 
$\tilde{n_\theta}>0$ for all $\theta \geq 0.$

\begin{lemma}\label{lem-6}
Assume \eqref{1-12-17-3} holds. Then, for any $\theta>0,$ the following holds:
	\begin{itemize}
		\item [(a)]
		$I_{\theta,B_\de}$ satisfies the $(PS)_c$ condition for all $c \in \big(0,\frac{s_1}{N}(S_{s_1,p})^\frac{N}{s_1p}\big).$ Moreover, 
$$\tilde{n_\theta} \in \big(0,\frac{s_1}{N}(S_{s_1,p})^\frac{N}{s_1p}\big).$$ 
		\item [(b)]
		$n_\theta=\tilde{n_\theta}.$
		\item [(c)]
		$n_\theta \to \frac{s_1}{N}(S_{s_1,p})^\frac{N}{s_1p} $ as $\theta \to 0.$
	\end{itemize}
\end{lemma}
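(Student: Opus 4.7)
The plan is to mirror, respectively, Lemma \ref{lem-2}, Lemma \ref{lem-3}, and the combination of Lemma \ref{lem-4}--Lemma \ref{lem-5}, taking advantage of the fact that the new ingredient -- the restriction to the radial subspace $X_{0,s_1,p}^{rad}(B_\de)$ -- leaves all the tools intact. The crucial observation is that the Talenti-type test function $u_{\eps,R}$ of Section 5, built from the radial minimizer $U$, is itself radial; whenever $R$ is chosen so that $B_{\theta R}\subset\subset B_\de$, one has $u_{\eps,R}\in X_{0,s_1,p}^{rad}(B_\de)$, and all the estimates of Lemma \ref{LEMMA1}--Lemma \ref{LEMMA2} apply verbatim.

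For part (a), I would carry over the $(PS)_c$ argument of Lemma \ref{lem-2} unchanged: any such sequence is uniformly bounded by Claim 1 in the proof of Lemma \ref{lemma3}, and the concentration--compactness Lemma \ref{Willem-lemma 1.40} rules out non-trivial bubbling below the threshold $\frac{s_1}{N}(S_{s_1,p})^{N/s_1p}$; radial symmetry is preserved under weak limits, so the limit stays in $X_{0,s_1,p}^{rad}(B_\de)$. For the MP level estimate $\tilde{n_\theta}<\frac{s_1}{N}(S_{s_1,p})^{N/s_1p}$, I would repeat the calculation of Lemma \ref{lem-2'} with $v_\eps=u_{\eps,R}/|u_{\eps,R}|_{p^*_{s_1}}$ on $B_\de$. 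For part (b), I would follow Lemma \ref{lem-3}: for $u\in N_{\theta,B_\de}$, the function $f(t)=I_{\theta,B_\de}(tu)$ satisfies $f'(1)=0$, and using the Nehari identity to eliminate $\theta|u^+|_r^r$ gives
\[ f''(1)=(p-r)\|u\|_{0,s_1,p}^p+(q-r)\|u\|_{0,s_2,q}^q+(r-p^*_{s_1})|u^+|_{p^*_{s_1}}^{p^*_{s_1}}<0 \]
under $q<p<r<p^*_{s_1}$, so $\max_{t\ge 0}f(t)=I_{\theta,B_\de}(u)\ge \tilde{n_\theta}$, yielding $n_\theta\ge\tilde{n_\theta}$; the reverse inequality is immediate because the radial MP critical point produced by (a) lies in $N_{\theta,B_\de}$.

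For part (c), monotonicity of $\theta\mapsto n_\theta$ together with the test-function bound from (a) give $\limsup_{\theta\to 0}n_\theta\le\frac{s_1}{N}(S_{s_1,p})^{N/s_1p}$ at once. For the opposite bound, I would pick $\theta_n\to 0$, a non-negative radial ground state $u_n\in X_{0,s_1,p}^{rad}(B_\de)$ with $I_{\theta_n,B_\de}(u_n)=n_{\theta_n}$ and $I'_{\theta_n,B_\de}(u_n)=0$ (furnished by (a) and (b) together with the remark after Lemma \ref{lem-4}), and $t_n>0$ so that $t_n u_n$ lies in the Nehari set of $I_{0,B_\de}$ on $X_{0,s_1,p}^{rad}(B_\de)$; then
\[ n_0\le I_{0,B_\de}(t_n u_n)=I_{\theta_n,B_\de}(t_n u_n)+\tfrac{\theta_n t_n^r}{r}|u_n|_r^r\le n_{\theta_n}+\tfrac{\theta_n t_n^r}{r}|u_n|_r^r, \]
and a Nehari-type computation as in Lemma \ref{lem-5} should bound $\{t_n\}$ and $\{u_n\}$ uniformly, making the last term vanish and giving $\liminf_n n_{\theta_n}\ge n_0$. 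The main obstacle, and the real content of (c), is to identify $n_0=\frac{s_1}{N}(S_{s_1,p})^{N/s_1p}$: applying (b) at $\theta=0$ gives $n_0=\tilde{n_0}$, and then the test function $u_{\eps,R}$ gives $\tilde{n_0}\le\frac{s_1}{N}(S_{s_1,p})^{N/s_1p}$ exactly as in Lemma \ref{lem-4}, while the trivial inclusion of radial paths into all paths in $X_{0,s_1,p}(B_\de)$ gives $\tilde{n_0}\ge\frac{s_1}{N}(S_{s_1,p})^{N/s_1p}$ thanks to Remark (i) after Lemma \ref{lem-4}.
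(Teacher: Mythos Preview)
Your proposal is correct and takes essentially the same route as the paper, which likewise reduces (a), (b), (c) to the radial analogues of Lemmas \ref{lem-2} and \ref{lem-2'}, Lemma \ref{lem-3}, and Lemmas \ref{lem-4}--\ref{lem-5}, the decisive point being exactly the one you single out: the test profile $u_{\eps,R}$ is radial, so all the estimates transfer to $X_{0,s_1,p}^{rad}(B_\de)$. The only cosmetic differences are that the paper phrases the $(PS)_c$ step via Brezis--Lieb rather than Lemma \ref{Willem-lemma 1.40}, and it re-runs Lemma \ref{lem-4} on $B_\de$ instead of invoking Remark (i); your appeal to ``(b) at $\theta=0$'' is harmless since the fibering identity $n_\theta=\tilde n_\theta$ follows from the ray-to-Nehari bijection for $\theta\ge 0$ and does not actually require the existence of a mountain-pass critical point.
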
 
 \begin{proof}
Applying Brezis-Lieb lemma, it is not difficult to check that $I_{\theta,B_\de}$ in $X_{0,s_1,p}^{rad}(B_\de)$ satisfies the (PS)$_c$ condition for all $c \in \big(0,\frac{s_1}{N}(S_{s_1,p})^\frac{N}{s_1p}\big)$.  By a similar argument as in Lemma \ref{lem-2}, we also obtain $\tilde{n_\theta} \in \big(0,\frac{s_1}{N}(S_{s_1,p})^\frac{N}{s_1p}\big)$. Further, following the same argument as in Lemma \ref{lem-4} and Lemma \ref{lem-5}, it yields  $n_\theta \to \frac{s_1}{N}(S_{s_1,p})^\frac{N}{s_1p}$ and  $\theta \to 0$ respectively. 
 \end{proof}
 Let us define a map $\tau :N_\theta \to \Rn$ by 
 $$\tau(u):=(S_{s_1,p})^{-\frac{N}{s_1p}}\Iom |u^+(x)|^{p^*_{s_1}}x \,\, dx.$$
 Let us denote $I_\theta^{n_\theta} =\{u \in X_{0,s_1,p}(\Om):\, I_\theta \leq n_\theta\}.$
 \begin{lemma}\label{lem-7}
 	There exists $\theta^*>0$ such that for any $\theta \in (0,\theta^*)$ and  $u \in N_\theta \cap I_{\theta}^{n_\theta}$, it holds $\tau(u) \in \Om_\de^+$ . 
 \end{lemma}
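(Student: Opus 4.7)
The plan is to argue by contradiction. Suppose the claim fails; then there exist sequences $\theta_n\downarrow 0$ and $u_n\in N_{\theta_n}\cap I_{\theta_n}^{n_{\theta_n}}$ with $\tau(u_n)\notin\Om_\de^+$ for every $n$, and by the remark following Lemma \ref{lem-4} I may assume $u_n\geq 0$. My first step is to show that $(u_n)$ is an extremising sequence for $S_{s_1,p}$: sandwiching $c_{\theta_n}\leq I_{\theta_n}(u_n)\leq n_{\theta_n}$ and invoking Lemmas \ref{lem-3}, \ref{lem-5} and \ref{lem-6}(c) gives $I_{\theta_n}(u_n)\to c_0=\tfrac{s_1}{N}(S_{s_1,p})^{N/s_1p}$. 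The identity
\[
pI_{\theta_n}(u_n)-\<I'_{\theta_n}(u_n),u_n\>=\Bigl(\tfrac{p}{q}-1\Bigr)\|u_n\|_{0,s_2,q}^q+\Bigl(1-\tfrac{p}{r}\Bigr)\theta_n|u_n|_r^r+\Bigl(1-\tfrac{p}{p^*_{s_1}}\Bigr)|u_n|_{p^*_{s_1}}^{p^*_{s_1}},
\]
whose three coefficients are strictly positive because $q<p<r<p^*_{s_1}$, yields uniform bounds on $\|u_n\|_{0,s_2,q}$, $|u_n|_{p^*_{s_1}}$ and $\theta_n|u_n|_r^r$; the Nehari relation then bounds $\|u_n\|_{0,s_1,p}$, and H\"older on the bounded domain $\Om$ upgrades $\theta_n|u_n|_r^r=o(1)$. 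Setting $a,b,d$ to be the (subsequential) limits of $\|u_n\|_{0,s_1,p}^p,\,\|u_n\|_{0,s_2,q}^q,\,|u_n|_{p^*_{s_1}}^{p^*_{s_1}}$, the Nehari identity gives $a+b=d$ while $I_{\theta_n}(u_n)\to c_0$ gives $\tfrac{s_1}{N}a+\bigl(\tfrac{1}{q}-\tfrac{1}{p^*_{s_1}}\bigr)b=\tfrac{s_1}{N}(S_{s_1,p})^{N/s_1p}$. Combined with the Sobolev estimate $a\geq S_{s_1,p}d^{p/p^*_{s_1}}\geq S_{s_1,p}a^{p/p^*_{s_1}}$, i.e.\ $a\geq (S_{s_1,p})^{N/s_1p}$, strict positivity of the coefficient of $b$ forces $b=0$ and $a=d=(S_{s_1,p})^{N/s_1p}$.

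Next I would locate the concentration point via Lemma \ref{Willem-lemma 1.40}. Because $\|u_n\|_{0,s_2,q}\to 0$, the compact embedding in Lemma \ref{Lemma4} forces $u_n\to 0$ in $L^q(\Om)$, so the weak $X_{0,s_1,p}$-limit of $u_n$ is $0$. Along a further subsequence $|D^{s_1}u_n|^p\deb\mu$ and $|u_n|^{p^*_{s_1}}\deb\nu$ in $\mathcal M(\Rn)$, and Lemma \ref{Willem-lemma 1.40} with $u\equiv 0$ supplies
\[
S_{s_1,p}\|\nu\|^{p/p^*_{s_1}}\leq\|\mu\|,\qquad S_{s_1,p}\nu_\infty^{p/p^*_{s_1}}\leq\mu_\infty,\qquad \|\mu\|+\mu_\infty=a,\qquad \|\nu\|+\nu_\infty=d.
\]
Adding the two Sobolev-type estimates and using $a=d=(S_{s_1,p})^{N/s_1p}$ yields $\|\nu\|^{p/p^*_{s_1}}+\nu_\infty^{p/p^*_{s_1}}\leq(\|\nu\|+\nu_\infty)^{p/p^*_{s_1}}$; since $t\mapsto t^{p/p^*_{s_1}}$ is strictly concave with $0\mapsto 0$, the reverse subadditive inequality $A^{p/p^*_{s_1}}+B^{p/p^*_{s_1}}\geq(A+B)^{p/p^*_{s_1}}$ holds with equality iff $AB=0$, hence $\|\nu\|\cdot\nu_\infty=0$. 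The bounded support of each $u_n$ inside $\overline\Om$ rules out any $L^{p^*_{s_1}}$ mass escape, so $\nu_\infty=0$, $\|\nu\|=(S_{s_1,p})^{N/s_1p}$, and a short chain of equalities gives $\mu_\infty=0$ and $S_{s_1,p}\|\nu\|^{p/p^*_{s_1}}=\|\mu\|$. The final clause of Lemma \ref{Willem-lemma 1.40} then identifies $\nu=(S_{s_1,p})^{N/s_1p}\delta_y$ for some $y\in\overline\Om$.

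To conclude, for each coordinate $i=1,\dots,N$ pick $\phi_i\in C_c(\Rn)$ coinciding with $x_i$ on a neighbourhood of $\overline\Om$; since $u_n\equiv 0$ outside $\Om$,
\[
\int_\Om x_i|u_n|^{p^*_{s_1}}dx=\int_{\Rn}\phi_i\,d(|u_n|^{p^*_{s_1}})\;\longrightarrow\;\int_{\Rn}\phi_i\,d\nu=y_i(S_{s_1,p})^{N/s_1p},
\]
whence $\tau(u_n)\to y\in\overline\Om\subset\Om_\de^+$, contradicting $\tau(u_n)\notin\Om_\de^+$. The main obstacle is the second step: parlaying the sharp asymptotics $a=d=(S_{s_1,p})^{N/s_1p}$, $b=0$ together with the bounded support of $(u_n)$ into a single-point Dirac mass for $\nu$; this is precisely the role of the extremality clause of Lemma \ref{Willem-lemma 1.40}, and the rest of the argument is essentially a weak-measure testing reading off the first moment of $\nu$.
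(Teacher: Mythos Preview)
Your argument is correct in substance but reaches the concentration point by a genuinely different route than the paper. The paper normalises $w_n=u_n/|u_n^+|_{p^*_{s_1}}$, observes that $\tilde w_n:=w_n^+$ is a minimising sequence for $S_{s_1,p}$, and then invokes Theorem~\ref{Willem-lemma} (the profile-decomposition/compactness result for extremal sequences) to produce $(y_n,\la_n)$ with $\la_n\to 0$, $y_n\to y\in\overline\Om$, and a strong limit $v$; the conclusion $\tau(u_n)\to y$ then comes from an explicit change of variables and dominated convergence. You instead use Lemma~\ref{Willem-lemma 1.40}: after pinning down $a=d=(S_{s_1,p})^{N/s_1p}$ and $b=0$, you show the weak limit is $0$, identify the defect measures $\mu,\nu$, rule out escape to infinity via the uniform support in $\overline\Om$, and force $S_{s_1,p}\|\nu\|^{p/p^*_{s_1}}=\|\mu\|$ so that the extremality clause of Lemma~\ref{Willem-lemma 1.40} gives $\nu=(S_{s_1,p})^{N/s_1p}\de_y$; testing against cut-off coordinate functions recovers $\tau(u_n)\to y$. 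Your path is arguably more self-contained here, since Theorem~\ref{Willem-lemma} is only sketched in the paper with a reference to the literature, whereas Lemma~\ref{Willem-lemma 1.40} is proved in full; the paper's route, on the other hand, yields the explicit rescaling data $(y_n,\la_n)$, which is more than is needed for this lemma.

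One small point to tidy: the remark after Lemma~\ref{lem-4} concerns Palais--Smale sequences of $I_\theta$, whereas your $u_n$ are merely elements of $N_{\theta_n}\cap I_{\theta_n}^{n_{\theta_n}}$, so citing it to assume $u_n\geq 0$ is not quite legitimate. This is easily repaired without changing your strategy: run your concentration-compactness step on $u_n^+$ rather than $u_n$. From $b=0$ you still get $u_n\to 0$ in $L^q(\Om)$, hence $u_n^+\rightharpoonup 0$; and since $\|u_n^+\|_{0,s_1,p}\le\|u_n\|_{0,s_1,p}$ while $|u_n^+|_{p^*_{s_1}}^{p^*_{s_1}}\to d=(S_{s_1,p})^{N/s_1p}$, the chain $S_{s_1,p}\|\nu\|^{p/p^*_{s_1}}\le\|\mu\|\le a=(S_{s_1,p})^{N/s_1p}$ again forces equality and a Dirac $\nu$, and $\tau$ depends only on $u^+$ anyway.
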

 \begin{proof}
 We will prove this by contradiction. Let us suppose $\theta_n \to 0$ and $u_n \in N_{\theta_n} \cap I_{{\theta_n}}^{n_{\theta_n}}$ but $\tau(u_n) \notin \Om_\de^+.$	We observe that 
 \be
 c_{\theta_n} \leq I_{\theta_n}(u_n)= \frac{1}{p}\|u_n\|_{0,s_1,p}^p+\frac{1}{q}\|u_n\|_{0,s_2,q}^q-\frac{\theta_n}{r}|u_n^+|_r^r-\frac{1}{p^*_{s_1}}|u_n^+|_{p^*_{s_1}}^{p^*_{s_1}}\leq n_{\theta_n}\no
 \ee
 and
 $$\|u_n\|_{0,s_1,p}^p+\|u_n\|_{0,s_2,q}^q-\theta_n|u_n^+|_r^r-|u_n^+|_{p^*_{s_1}}^{p^*_{s_1}}=\<I'_\theta(u_n),u_n\>=0.$$
 It can be shown as before that $\|u_n\|_{0,s_1,p}$ is bounded. Therefore, we have,
 \be\label{y1}
 c_{\theta_n} \leq I_{\theta_n}(u_n)= \frac{1}{p}\|u_n\|^p_{0,s_1,p}+\frac{1}{q}\|u_n\|_{0,s_2,q}^q-\frac{1}{p^*_{s_1}}|u_n^+|_{p^*_{s_1}}^{p^*_{s_1}}+o(1)\leq n_{\theta_n}+o(1)
 \ee
 and
 \begin{equation}\label{y2}
 \|u_n\|_{0,s_1,p}^p+\|u_n\|_{0,s_2,q}^q-|u_n^+|_{p^*_{s_1}}^{p^*_{s_1}}=o(1).
 \end{equation}
 Using (\ref{y1}) and (\ref{y2}) we have,
 \be
 \frac{s_1}{N}\|u_n\|_{0,s_1,p}^p \leq \bigg(\frac{1}{p}-\frac{1}{p^*_{s_1}}\bigg)\|u_n\|_{0,s_1,p}^p+\bigg(\frac{1}{q}-\frac{1}{p^*_{s_1}}\bigg)\|u_n\|_{0,s_2,q}^q\leq n_{\theta_n}+o(1)\no.
 \ee
Consequently, applying  Lemma \ref{lem-6}(c) it yields
 \begin{equation}\label{y3}
 \|u_n\|_{0,s_1,p}^p \leq (S_{s_1,p})^\frac{N}{s_1p}+o(1).
 \end{equation}
 From (\ref{y2}), it follows
 \begin{equation}\label{y4}
 \|u_n\|_{0,s_1,p}^p \leq |u_n^+|_{p^*_{s_1}}^{p^*_{s_1}}+o(1).
 \end{equation}
Define $w_n=\frac{u_n}{|u_n^+|_{p^*_{s_1}}}$, which implies  $|w_n^+|_{p^*_{s_1}}=1.$ Using (\ref{y3}) and (\ref{y4}), we obtain
 \be\label{27-12-1}
 S_{s_1,p} \leq \|w_n\|_{0,s_1,p}^p \leq \frac{\|u_n\|_{0,s_1,p}^p}{|u_n^+|_{p^*_{s_1}}^p} \leq \|u_n\|_{0,s_1,p}^{p-\frac{p^2}{p^*_{s_1}}}+o(1)\leq S_{s_1,p}+o(1).
 \ee
 Hence, the function $\tilde{w_n}(x):=w_n^+(x)$ satisfies
 $$|\tilde{w_n}|_{p^*_{s_1}}=1 \quad\text{and}\quad \|\tilde{w_n}\|_{0,s_1,p}^p \to S_{s_1,p}\quad\text{as}\quad n \to \infty.$$
 Using Theorem \ref{Willem-lemma}, there exists a sequence $(y_n,\la_n) \in \Rn \times \R^+$ such that the sequence $v_n$ defined by
 $$v_n(x)=\la_n^{\frac{(N-ps_1)}{p}}\tilde{w_n}(\la_nx+y_n),$$
converges strongly to some $v\in W^{s_1,p}(\Rn)$. Combining \eqref{27-12-1} and (\ref{y4}), we get
 $$S_{s_1,p}|u_n^+|_{p^*_{s_1}}^p+o(1)=\|u_n\|_{0,s_1,p}^p\leq |u_n^+|_{p^*_{s_1}}^{p^*_{s_1}}+o(1).$$
 Hence, \begin{equation}\label{y5}
 |u_n^+|_{p^*_{s_1}}^{p^*_{s_1}} \geq (S_{s_1,p})^\frac{N}{s_1p} +o(1), \quad n \to \infty.
 \end{equation}
Further, from \eqref{27-12-1} and (\ref{y3}) it follows
 $$S_{s_1,p}|u_n^+|_{p^*}^p+o(1)=\|u_n\|_{0,s,p}^p\leq (S_{s_1,p})^\frac{N}{s_1p}+o(1).$$
 Hence, \begin{equation}\label{y6}
 |u_n^+|_{p^*_{s_1}}^{p^*_{s_1}} \leq (S_{s_1,p})^\frac{N}{s_1p}+o(1).
 \end{equation}
 Using (\ref{y5}) and (\ref{y6}) we conclude that,
\be\lab{27-12-2}|u_n^+|_{p^*_{s_1}}^{p^*_{s_1}} \to (S_{s_1,p})^\frac{N}{s_1p} \quad\text{as}\quad n \to \infty.\ee 
Now,
\Bea
\tau(u_n)=(S_{s_1,p})^{-\frac{N}{s_1p}} \Iom |u_n^+(x)|^{p^*_{s_1}}x\,dx
&=& (S_{s_1,p})^{-\frac{N}{s_1p}} |u_n^+|_{p^*_{s_1}}^{p^*_{s_1}}\Iom \tilde{w_n}^{p^*_{s_1}}(x)x\,dx\\
&=& (S_{s_1,p})^{-\frac{N}{s_1p}}|u_n^+|_{p^*_{s_1}}^{p^*_{s_1}}\int_{\Om}
x\la_n^{-N}v_n^{p^*_{s_1}}(\frac{x-y_n}{\la_n})\,dx\\
&=& (S_{s_1,p})^{-\frac{N}{s_1p}}|u_n^+|_{p^*_{s_1}}^{p^*_{s_1}}\int_{\frac{\Om-y_n}{\la_n}} (\la_n z+y_n)v_n^{p^*_{s_1}}(z)\,dz.
\Eea

Applying dominated convergence theorem via \eqref{27-12-2} and Theorem \ref{Willem-lemma} to the last line of the above expression we obtain
$$\tau(u_n)\to y\int_{\Rn} |v|^{p^*_{s_1}}\, dz =y\in\bar\Om,$$
which is a contradiction to the assumption. Hence the lemma follows. 

 \end{proof}
 
Using Lemma \ref{lem-6}, we can find a non-negative radial function $v_\theta \in N_{\theta,B_\de}$ such that $I_\theta(v_\theta)=I_{\theta,B_\de}(v_\theta)=n_\theta.$ Let us define a map $\ga:\Om_\de^- \to I_\theta^{n_\theta}$ by $\gamma(y)=\psi_y$, where $\psi_y$ is defined as follows

\begin{equation}\lab{27-12-4}
\psi_y(x)=\begin{cases}
v_\theta(x-y), \quad\mbox{if}\quad x \in B_\de(y),\\
0,\quad\mbox{otherwise}.
\end{cases}
\end{equation}
Now, for each $y \in \Om_\de^-$ we have, 
\bea\lab{4-1-2}
(\tau \circ \ga)(y)=\tau\circ\psi_y &=& (S_{s_1,p})^{-\frac{N}{s_1p}}\int_{B_{\de}(y)} v_\theta(x-y)^{p^*_{s_1}}x\,dx\no\\
&=& (S_{s_1,p})^{-\frac{N}{s_1p}}\int_{B_\delta(0)} v_\theta(z)^{p^*_{s_1}}(z+y)dz\no\\
&=&y (S_{s_1,p})^{-\frac{N}{s_1p}}\int_{B_\delta(0)} v_\theta(z)^{p^*_{s_1}}dz+ (S_{s_1,p})^{-\frac{N}{s_1p}}\int_{B_\delta(0)} v_\theta(z)^{p^*_{s_1}}z\,dz.\no\\
\hfill
\eea
Further, using the fact that $v_\theta$ is radial, it is easy to check that
\be\lab{4-1-3}\int_{B_\delta(0)} v_\theta(z)^{p^*_{s_1}}z\,dz=0.\ee
Substitution of \eqref{4-1-3} into  \eqref{4-1-2} yields
\be\lab{27-12-7}(\tau \circ \ga)(y)=\al_\theta y,\ee
where, $\al_{\theta}=(S_{s_1,p})^{-\frac{N}{s_1p}}\displaystyle\int_{B_\delta(0)} v_\theta(z)^{p^*_{s_1}}dz$.

 \begin{lemma}\label{lem-8}
 	 $\al_\theta \to 1$ if $\theta \to 0$. 
 \end{lemma}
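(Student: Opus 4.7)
The plan is to show both $\|v_\theta\|_{0,s_1,p}^p$ and $|v_\theta|_{p^*_{s_1}}^{p^*_{s_1}}$ converge to $(S_{s_1,p})^{N/s_1p}$ as $\theta\to 0$, which by the very definition of $\alpha_\theta$ will give $\alpha_\theta\to 1$. The starting point is the identity obtained by eliminating $|v_\theta|_{p^*_{s_1}}^{p^*_{s_1}}$ between $I_{\theta,B_\delta}(v_\theta)=n_\theta$ and the Nehari relation $\langle I'_{\theta,B_\delta}(v_\theta),v_\theta\rangle=0$:
$$n_\theta \;=\; \frac{s_1}{N}\|v_\theta\|_{0,s_1,p}^p+\Big(\frac{1}{q}-\frac{1}{p^*_{s_1}}\Big)\|v_\theta\|_{0,s_2,q}^q-\Big(\frac{1}{r}-\frac{1}{p^*_{s_1}}\Big)\theta|v_\theta|_r^r,$$
together with Lemma \ref{lem-6}(c), which gives $n_\theta\to\frac{s_1}{N}(S_{s_1,p})^{N/s_1p}$.

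The crucial step is uniform boundedness of $\{\|v_\theta\|_{0,s_1,p}\}$ as $\theta\to 0$. I would argue by contradiction: suppose $M_\theta:=\|v_\theta\|_{0,s_1,p}\to\infty$ along some sequence, and normalize $\hat v_\theta:=v_\theta/M_\theta$. Using $q<p$ (so $M_\theta^{q-p}\to 0$) and boundedness of $\|\hat v_\theta\|_{0,s_2,q}$ and $|\hat v_\theta|_r$ (via Lemma \ref{Lemma4} and Sobolev), dividing the identity above by $M_\theta^p$ forces
$$\theta M_\theta^{r-p}\Big(\frac{1}{r}-\frac{1}{p^*_{s_1}}\Big)|\hat v_\theta|_r^r\;\longrightarrow\;\frac{s_1}{N}.$$
Substituting this back into the $M_\theta^{-p}$-scaled Nehari identity yields
$$M_\theta^{p^*_{s_1}-p}|\hat v_\theta|_{p^*_{s_1}}^{p^*_{s_1}}\;\longrightarrow\;1-\frac{s_1\,r\,p^*_{s_1}}{N(p^*_{s_1}-r)}.$$
A short computation (using $p^*_{s_1}=Np/(N-s_1p)$) shows the RHS equals $0$ precisely at $r=p$ and is strictly decreasing in $r$, hence strictly negative for $r>p$. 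Since the LHS is non-negative, this is the contradiction. This is the main obstacle of the proof: choosing the right linear combination of the two energy identities and reading off the sign from the hypothesis $r>p$.

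Once $\|v_\theta\|_{0,s_1,p}$ is bounded, we get $|v_\theta|_r\le C\|v_\theta\|_{0,s_1,p}$ bounded, so $\theta|v_\theta|_r^r\to 0$. Dropping the non-negative $\|v_\theta\|_{0,s_2,q}^q$ term in the first identity gives $\frac{s_1}{N}\|v_\theta\|_{0,s_1,p}^p\le n_\theta+o(1)$, so $\limsup\|v_\theta\|_{0,s_1,p}^p\le (S_{s_1,p})^{N/s_1p}$, and Sobolev then yields $\limsup|v_\theta|_{p^*_{s_1}}^{p^*_{s_1}}\le (S_{s_1,p})^{N/s_1p}$.

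For the matching lower bound, the Nehari identity gives $\|v_\theta\|_{0,s_1,p}^p\le |v_\theta|_{p^*_{s_1}}^{p^*_{s_1}}+o(1)$. The case $|v_\theta|_{p^*_{s_1}}\to 0$ is excluded: otherwise $\|v_\theta\|_{0,s_1,p}\to 0$, forcing $n_\theta\to 0$, which contradicts $n_\theta\to \frac{s_1}{N}(S_{s_1,p})^{N/s_1p}>0$. Hence $|v_\theta|_{p^*_{s_1}}$ stays bounded away from $0$, and combining with $S_{s_1,p}|v_\theta|_{p^*_{s_1}}^p\le\|v_\theta\|_{0,s_1,p}^p$ produces $S_{s_1,p}\le |v_\theta|_{p^*_{s_1}}^{p^*_{s_1}-p}+o(1)$, which after raising to the power $p^*_{s_1}/(p^*_{s_1}-p)=N/(s_1p)$ gives $\liminf|v_\theta|_{p^*_{s_1}}^{p^*_{s_1}}\ge (S_{s_1,p})^{N/s_1p}$. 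Thus $|v_\theta|_{p^*_{s_1}}^{p^*_{s_1}}\to (S_{s_1,p})^{N/s_1p}$ and $\alpha_\theta\to 1$.
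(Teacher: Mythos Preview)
Your proof is correct and follows essentially the same route as the paper's, which simply says ``by similar argument as in Lemma \ref{lem-7} we have $|v_\theta|_{p^*_{s_1}}^{p^*_{s_1}} \to (S_{s_1,p})^{N/s_1p}$.'' You have spelled out the details of that referral: the combination of $I_{\theta,B_\delta}(v_\theta)=n_\theta$ with the Nehari identity, the bound $n_\theta\to \frac{s_1}{N}(S_{s_1,p})^{N/s_1p}$ from Lemma \ref{lem-6}(c), and the Sobolev inequality to pin down both upper and lower limits of $|v_\theta|_{p^*_{s_1}}^{p^*_{s_1}}$ --- exactly the chain (\ref{y1})--(\ref{y6}) in the proof of Lemma \ref{lem-7}. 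Your contradiction argument for the uniform boundedness of $\|v_\theta\|_{0,s_1,p}$ is slightly more explicit than the paper's (the paper just writes ``as before''), but the underlying idea is the same normalization-and-limit trick used in Claim~1 of Lemma \ref{lemma3}.
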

 \begin{proof}
From Lemma \ref{lem-6}, we observe that 
 \be
 n_\theta= I_{\theta,B_\de}(v_\theta)= \frac{1}{p}\|v_\theta\|_{0,s_1,p}^p+\frac{1}{q}\|v_\theta\|_{0,s_2,q}^q-\frac{\theta}{r}\int_{B_{\de}}|v_\theta|^r-\frac{1}{p^*_{s_1}}\int_{B_{\de}}|v_\theta|^{p^*_{s_1}}\leq \frac{s_1}{N}(S_{s_1,p})^\frac{N}{s_1p}\no
 \ee
 and
 $$\|v_\theta\|_{0,s_1,p}^p+\|v_\theta\|_{0,s_2,q}^q-\theta|v_\theta|_r^r-|v_\theta|_{p^*_{s_1}}^{p^*_{s_1}}=0.$$
 By similar argument as in Lemma \ref{lem-7} we have,
 $|v_\theta|_{p^*_{s_1}}^{p^*_{s_1}} \to (S_{s_1,p})^\frac{N}{s_1p}$ as $\theta \to 0.$ Hence the lemma follows.
 
\end{proof}

Let us define a map $H_\theta:[0,1] \times (N_\theta \cap I_\theta^{n_\theta}) \to \Rn$ by
\be\lab{27-12-5} H_\theta(t,u)=\bigg(t+\frac{1-t}{\al_ \theta}\bigg)\tau(u).\ee

\begin{lemma}\label{lem-9}
	There exists $\theta_*>0$ such that for any $\theta \in (0,\theta_*),$ it holds $$H_\theta([0,1] \times (N_\theta \cap I_\theta^{n_\theta})) \subset \Om_\de^+.$$
\end{lemma}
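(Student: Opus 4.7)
The plan is to argue by contradiction, combining Lemma \ref{lem-7} (which controls the location of $\tau(u)$) with Lemma \ref{lem-8} (which controls the scalar factor $\al_\theta$). Assume the conclusion fails. Then there are sequences $\theta_n \downarrow 0$, $t_n \in [0,1]$ and $u_n \in N_{\theta_n}\cap I_{\theta_n}^{n_{\theta_n}}$ with
$H_{\theta_n}(t_n,u_n)=\beta_n\,\tau(u_n)\notin \Om_\de^+$, where $\beta_n := t_n + (1-t_n)/\al_{\theta_n}$. Passing to a subsequence, assume $t_n \to t_*\in[0,1]$. Lemma \ref{lem-8} gives $\al_{\theta_n}\to 1$, hence $\beta_n \to t_*+(1-t_*)=1$.

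Next, I would revisit the proof of Lemma \ref{lem-7} and observe that it actually yields more than stated: the same concentration-compactness analysis (applied to $\tilde w_n=(u_n/|u_n^+|_{p^*_{s_1}})^+$, which satisfies $|\tilde w_n|_{p^*_{s_1}}=1$ and $\|\tilde w_n\|_{0,s_1,p}^p\to S_{s_1,p}$) produces via Theorem \ref{Willem-lemma} a sequence $(y_n,\la_n)$ with $y_n \to y\in\overline{\Om}$, and the change of variables at the end shows
\[
\tau(u_n)=(S_{s_1,p})^{-\frac{N}{s_1p}}|u_n^+|_{p^*_{s_1}}^{p^*_{s_1}}\int_{\frac{\Om-y_n}{\la_n}}(\la_n z+y_n)\,v_n^{p^*_{s_1}}(z)\,dz \;\longrightarrow\; y\,\cdot 1 = y\in\overline{\Om}.
\]
In particular, $\tau(u_n)$ is bounded and converges (up to subsequence) to a point of $\overline{\Om}$.

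Combining the two facts, $H_{\theta_n}(t_n,u_n)=\beta_n\,\tau(u_n) \to 1\cdot y = y\in\overline{\Om}\subset \Om_\de^+$. But the sequence $H_{\theta_n}(t_n,u_n)$ lies in the closed set $(\Om_\de^+)^c$, so the limit $y$ must belong to $(\Om_\de^+)^c$, a contradiction. This contradiction yields the required $\theta_*>0$. The only subtle point, and the step that requires real care, is the first one: one must verify that the limiting point $y$ extracted in Lemma \ref{lem-7} indeed lies in $\overline{\Om}$ (this is the content of the conclusion $y_n\to y\in\overline{\Om}$ in Theorem \ref{Willem-lemma}) and not merely in $\Om_\de^+$, since otherwise the openness argument fails when $\tau(u_n)$ approaches $\partial \Om_\de^+$.
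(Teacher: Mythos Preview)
Your proof is correct and follows essentially the same route as the paper: a contradiction argument using $\al_{\theta_n}\to 1$ from Lemma \ref{lem-8} together with the fact, extracted from the proof of Lemma \ref{lem-7}, that $\tau(u_n)\to y\in\overline{\Om}$. Your version is slightly more explicit (writing out $\beta_n\to 1$ and the closed-set argument for the contradiction), but the argument is the same.
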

 \begin{proof}
 We will prove it by method of contradiction. Suppose there exists sequence $\theta_n \to 0$ and $(t_n,u_n) \in [0,1] \times (N_\theta \cap I_\theta^{n_\theta})$	such that
 \begin{equation}\label{o1}
 H_{\theta_n}(t_n,u_n) \notin \Om_\de^+ \,\,\ \forall n \in \N.
 \end{equation}
 	As $t_n \in [0,1]$,  up to a subsequence, we assume $t_n \to t_0 \in [0,1].$ Moreover, by Lemma \ref{lem-8} and from the proof of the Lemma \ref{lem-7},
we have $\al_{\theta_n} \to 1$ and $\tau(u_n) \to y \in \overline{\Om}.$ 	
Hence, $H_{\theta_n}(t_n,u_n)=\big(t_n+\frac{1-t_n}{\al_{\theta_n}}\big)\tau(u_n) \to y \in \overline{\Om}.$ This is a contradiction to (\ref{o1}). Hence the lemma follows.	
\end{proof}

\begin{lemma}\label{lem-11}
	Let $u_\theta$ be a critical point of $I_\theta$ on $N_\theta.$ Then, $u_\theta$ is a critical point of $I_\theta$ on $X_{0,s_1,p}(\Om).$ 
\end{lemma}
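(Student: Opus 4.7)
The strategy is the standard Nehari-manifold argument: since $N_\theta$ is cut out by a single $C^1$ constraint, a constrained critical point satisfies a Lagrange multiplier identity, and we must show the multiplier vanishes. Concretely, define
$$J(u):=\<I'_\theta(u),u\>=\|u\|_{0,s_1,p}^p+\|u\|_{0,s_2,q}^q-\theta|u^+|_r^r-|u^+|_{p^*_{s_1}}^{p^*_{s_1}},$$
so that $N_\theta=\{u\neq 0:J(u)=0\}$. Since $I_\theta\in C^1$, so is $J$, and I will first verify that $J'(u_\theta)\neq 0$ as an element of $X_{0,s_1,p}(\Om)^*$ (so that $N_\theta$ is a genuine $C^1$-manifold near $u_\theta$). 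Then the Lagrange multiplier rule gives $\mu\in\R$ with $I'_\theta(u_\theta)=\mu J'(u_\theta)$.

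The key computation is to test this identity against $u_\theta$ itself. Since $u_\theta\in N_\theta$, the left side vanishes, so $0=\mu\<J'(u_\theta),u_\theta\>$. A direct differentiation gives
$$\<J'(u_\theta),u_\theta\>=p\|u_\theta\|_{0,s_1,p}^p+q\|u_\theta\|_{0,s_2,q}^q-\theta r|u_\theta^+|_r^r-p^*_{s_1}|u_\theta^+|_{p^*_{s_1}}^{p^*_{s_1}}.$$
Using the Nehari constraint $J(u_\theta)=0$ to substitute $\theta|u_\theta^+|_r^r=\|u_\theta\|_{0,s_1,p}^p+\|u_\theta\|_{0,s_2,q}^q-|u_\theta^+|_{p^*_{s_1}}^{p^*_{s_1}}$ yields
$$\<J'(u_\theta),u_\theta\>=(p-r)\|u_\theta\|_{0,s_1,p}^p+(q-r)\|u_\theta\|_{0,s_2,q}^q+(r-p^*_{s_1})|u_\theta^+|_{p^*_{s_1}}^{p^*_{s_1}}.$$

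Under the assumption $2\leq q<p<r<p^*_{s_1}$ from Theorem \ref{thm1}/Theorem \ref{thm2}, all three coefficients $p-r$, $q-r$, $r-p^*_{s_1}$ are strictly negative. Since $u_\theta\neq 0$ implies $\|u_\theta\|_{0,s_1,p}>0$, we conclude $\<J'(u_\theta),u_\theta\><0$, in particular $J'(u_\theta)\not\equiv 0$. Combined with $0=\mu\<J'(u_\theta),u_\theta\>$, this forces $\mu=0$, whence $I'_\theta(u_\theta)=0$ in $X_{0,s_1,p}(\Om)^*$, as desired.

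The only mildly delicate point is the initial verification that $J\in C^1$ and that $J'(u_\theta)\neq 0$ so that the Lagrange multiplier rule actually applies; but both follow from the same sign computation above once one observes $\<J'(u_\theta),u_\theta\>\neq 0$ (which prevents $N_\theta$ from being tangent to the radial direction at $u_\theta$). Thus the argument is essentially algebraic, driven entirely by the exponent ordering $q<p<r<p^*_{s_1}$.
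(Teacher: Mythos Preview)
Your proof is correct and follows essentially the same approach as the paper: apply the Lagrange multiplier rule to obtain $I'_\theta(u_\theta)=\mu J'(u_\theta)$, test against $u_\theta$, and use the exponent ordering $q<p<r<p^*_{s_1}$ to show $\<J'(u_\theta),u_\theta\><0$, forcing $\mu=0$. Your version is slightly more careful in explicitly noting that $\<J'(u_\theta),u_\theta\>\neq 0$ also guarantees $J'(u_\theta)\neq 0$, so that the Lagrange multiplier rule is legitimately applicable.
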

\begin{proof}
Suppose, $u_\theta$ is a critical point of $I_\theta$ on $N_\theta.$ Therefore, 
\begin{equation}\label{I'-theta}
\<I'_\theta(u_\theta),u_\theta\>=0.
\end{equation}
Using Lagrange multiplier method, there exists $\mu \in \R$ such that 
\begin{equation}\label{I'-J}
I'_\theta(u_\theta)=\mu J_\theta'(u_\theta),
\end{equation}
where 
\begin{equation}\label{J-theta}
J_\theta(u):=\|u\|_{0,s_1,p}^p+\|u\|_{0,s_2,q}^q-\theta|u^+|_r^r-|u^+|_{p^*_{s_1}}^{p^*_{s_1}}.
\end{equation}
Therefore, \be\lab{27-12-3}\mu\<J'_\theta(u_\theta),u_\theta\>=0.\ee
Observe that,
\bea\label{J'}
\<J'_\theta(u_\theta),u_\theta\>&=&p\|u_\theta\|_{0,s_1,p}^p+q\|u_\theta\|_{0,s_2,q}^q-r\theta|u_{\theta}^+|_r^r-p^*_{s_1}|u_{\theta}^+|_{p^*_{s_1}}^{p^*_{s_1}}\no\\
&=&(p-r)\|u_\theta\|_{0,s_1,p}^p+(q-r)\|u_\theta\|_{0,s_2,q}^q-(p^*_{s_1}-r)|u^+_\theta|_{p^*_{s_1}}^{p^*_{s_1}}<0.
\eea
Consequently, from \eqref{27-12-3} we conclude that $\mu=0$ and therefore by (\ref{I'-J}) we have $I'_\theta(u_\theta)=0$ and this completes the proof.
\end{proof}
In the next two lemmas, we denote $I_{N_\theta} :=I_\theta|_{N_\theta}$(restriction of $I_\theta$ on $N_\theta.$) 
\begin{lemma}\label{lem-12}
Assume \eqref{1-12-17-3} holds and $\theta>0$ is fixed. Then for any sequence $\{u_n\} \subset N_\theta$ such that
	$$I_\theta(u_n) \to c<\frac{s_1}{N}\big(S_{s_1,p}\big)^\frac{N}{s_1p},\quad I'_{N_\theta}(u_n) \to 0,$$
	there exists $u \in N_\theta$ such that up to a subsequence, $u_n \to u$ as $n \to \infty.$
\end{lemma}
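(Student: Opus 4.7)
The plan is to upgrade the constrained Palais--Smale condition on $N_\theta$ to an unconstrained one on $X_{0,s_1,p}(\Om)$ and then invoke Lemma \ref{lem-2}. Write $J_\theta$ as in \eqref{J-theta}, so that $N_\theta = \{u\neq 0 : J_\theta(u)=0\}$ is (locally) a $C^1$ manifold of codimension one. By the Lagrange multiplier rule, for every $u_n\in N_\theta$ there exists $\mu_n\in\R$ such that
\[
I'_{\theta}(u_n)=\mu_n J'_\theta(u_n) + \varepsilon_n,\qquad \varepsilon_n\to 0 \text{ in }(X_{0,s_1,p}(\Om))'.
\]
Testing against $u_n$ and using $\langle I'_\theta(u_n),u_n\rangle=0$ (since $u_n\in N_\theta$) gives $\mu_n\langle J'_\theta(u_n),u_n\rangle = o(1)$.

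Next I would show $\langle J'_\theta(u_n),u_n\rangle$ is bounded away from $0$. Exactly as in Lemma \ref{lem-11}, equation \eqref{J'},
\[
\langle J'_\theta(u_n),u_n\rangle = (p-r)\|u_n\|_{0,s_1,p}^{p} + (q-r)\|u_n\|_{0,s_2,q}^{q} - (p^*_{s_1}-r)|u_n^+|_{p^*_{s_1}}^{p^*_{s_1}}.
\]
Since $q<p<r<p^*_{s_1}$, every term is nonpositive, so it suffices to bound $\|u_n\|_{0,s_1,p}$ from below by a positive constant. This comes from the Nehari identity $\|u_n\|_{0,s_1,p}^p \le \|u_n\|_{0,s_1,p}^p + \|u_n\|_{0,s_2,q}^q = \theta|u_n^+|_r^r + |u_n^+|_{p^*_{s_1}}^{p^*_{s_1}}$ combined with H\"older and the Sobolev embedding $X_{0,s_1,p}\hookrightarrow L^{p^*_{s_1}}$ (and $L^r$), giving $1\le C_1\theta\|u_n\|_{0,s_1,p}^{r-p} + C_2\|u_n\|_{0,s_1,p}^{p^*_{s_1}-p}$, whence $\|u_n\|_{0,s_1,p}\ge \rho_0 >0$. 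Consequently $|\langle J'_\theta(u_n),u_n\rangle|\ge (r-p)\rho_0^p>0$, which forces $\mu_n\to 0$.

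Boundedness of $\{u_n\}$ in $X_{0,s_1,p}(\Om)$ follows by the same argument as Claim 1 in the proof of Lemma \ref{lemma3} (using $I_\theta(u_n)\to c$ and the Nehari identity instead of $\langle I'_\theta(u_n),u_n\rangle=o(\|u_n\|)$). Boundedness then makes $J'_\theta(u_n)$ uniformly bounded in $(X_{0,s_1,p}(\Om))'$, so from $I'_\theta(u_n)=\mu_n J'_\theta(u_n)+\varepsilon_n$ and $\mu_n\to 0$ we deduce $I'_\theta(u_n)\to 0$ in $(X_{0,s_1,p}(\Om))'$. Hence $\{u_n\}$ is an \emph{unconstrained} $(PS)_c$ sequence for $I_\theta$ with $c<\frac{s_1}{N}(S_{s_1,p})^{N/s_1p}$, and Lemma \ref{lem-2} supplies a subsequence $u_n\to u$ strongly in $X_{0,s_1,p}(\Om)$. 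Strong convergence plus the continuity of $I'_\theta$ gives $I'_\theta(u)=0$, and the lower bound $\|u\|_{0,s_1,p}\ge\rho_0$ rules out $u=0$; therefore $u\in N_\theta$.

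The main technical point is the passage from the constrained derivative to the unconstrained one, i.e.\ verifying that the Lagrange multipliers vanish in the limit. The delicate ingredient there is the uniform lower bound on $\|u_n\|_{0,s_1,p}$, which is needed not only to prevent $u_n\to 0$ (and thus to keep the limit in $N_\theta$) but crucially to guarantee that $\langle J'_\theta(u_n),u_n\rangle$ stays bounded away from zero so that $\mu_n\to 0$. Everything else is either a direct imitation of earlier arguments in the paper or an immediate consequence of Lemma \ref{lem-2}.
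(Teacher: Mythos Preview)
Your proof is correct and follows essentially the same route as the paper: Lagrange multipliers on $N_\theta$, the identity \eqref{J'} for $\langle J'_\theta(u_n),u_n\rangle$, the Nehari/Sobolev lower bound $\|u_n\|_{0,s_1,p}\ge\rho_0$ to force $\mu_n\to 0$, and then Lemma~\ref{lem-2}. The only cosmetic difference is that the paper phrases the lower bound step as a proof by contradiction (ruling out $\langle J'_\theta(u_n),u_n\rangle\to 0$), whereas you obtain $|\langle J'_\theta(u_n),u_n\rangle|\ge (r-p)\rho_0^p$ directly; you are also a bit more explicit about why $\mu_n J'_\theta(u_n)\to 0$ and why the limit $u$ actually lies in $N_\theta$.
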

\begin{proof} 
From the given assumption, we get there exists a sequence $\{\mu_n\} \subset \R$ such that 
	$$\|I'_\theta(u_n)-\mu_n J'_\theta(u_n)\| \to 0 \quad\mbox{as}\quad n \to \infty.$$
	Hence,
	\begin{equation}\label{J'-theta}
	I'_\theta(u_n)=\mu_n J'_\theta(u_n)+o(1).
	\end{equation}
By (\ref{J'}), we have $\<J'_\theta(u_n),u_n\> <0$ for every $n\geq 1$. Note that, up to a subsequence, $\<J'_\theta(u_n),u_n\> \to l<0$ as $n \to \infty.$ Otherwise, if $\<J'_\theta(u_n),u_n\> \to 0$ as $n \to \infty,$ then $$\|u_n\|_{0,s_1,p} \to 0,\,\, \|u_n\|_{0,s_2,q} \to 0,\,\, |u_n^+|_{p^*_{s_1}} \to 0 \quad\mbox{as}\quad n \to \infty.$$
On the other hand, as $u_n \in N_\theta$ using Sobolev embedding theorem, there exists $C>0$ such that
$$\|u_n\|_{0,s_1,p}^p\leq \|u_n\|_{0,s_1,p}^p+\|u_n\|_{0,s_2,q}^q=\theta|u_n^+|_r^r+|u_n|_{p^*_{s_1}}^{p^*_{s_1}}\leq C(\theta \|u_n\|_{0,s_1,p}^{r}+\|u_n\|_{0,s_1,p}^{p^*_{s_1}}).$$
This in turn implies $$1 \leq C(\theta \|u_n\|_{0,s_1,p}^{r-p}+\|u_n\|_{0,s_1,p}^{p^*_{s_1}-p}),$$ which is a contradiction. Hence, up to a subsequence, we have,
$$\<J'_\theta(u_n),u_n\> \to l<0 \quad\mbox{as}\quad n \to \infty.$$
Moreover, $u_n \in N_\theta $ for all $n \geq 1,$ implies  $\<I'_\theta(u_n),u_n\>=0$ for all $n \geq 1.$ As a consequence, from (\ref{J'-theta}) we have, $\mu_n \to 0$ as $n \to \infty.$ Therefore, $I'_\theta(u_n) \to 0$ as $n \to \infty.$ As $I_\theta(u_n) \to c<\frac{s_1}{N}\big(S_{s_1,p}\big)^\frac{N}{s_1p},$ using Lemma \ref{lem-2} we conclude the result. 
\end{proof}

Define, \be\lab{27-12-8}\theta_{**}=\min\{\theta^*, \theta_*\}, \ee
where $\theta^*$ is same as in Lemma \ref{lem-7} and $\theta_*$ is as found in Lemma \ref{lem-9} . 

\begin{lemma}\label{lem-13}
Assume \eqref{1-12-17-3} holds and $\theta \in (0,\theta_{**})$, where $\theta_{**}$ is as defined in \eqref{27-12-8}. Then
	$$cat_{I_{N_\theta}^{n_\theta}}({I_{N_\theta}^{n_\theta}}) \geq cat_\Om(\Om).$$
\end{lemma}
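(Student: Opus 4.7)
The plan is to execute the standard Lusternik--Schnirelmann argument linking $\Om$, the sublevel set $I_{N_\theta}^{n_\theta}$ on the Nehari manifold, and the neighbourhoods $\Om_\delta^\pm$, through the two maps $\gamma:\Om_\delta^-\to I_{N_\theta}^{n_\theta}$ and $\tau:I_{N_\theta}^{n_\theta}\to\Om_\delta^+$ that have already been constructed.

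First I would verify that these maps really have the claimed codomains and fit together via a homotopy in $\Om_\delta^+$. Since $v_\theta\in N_{\theta,B_\delta}$ is supported in $\overline{B_\delta(0)}$, for $y\in\Om_\delta^-$ the ball $B_\delta(y)$ lies inside $\Om$, so the translate $\psi_y$ in \eqref{27-12-4} shares all Gagliardo seminorms and $L^t$ norms with $v_\theta$; hence $\psi_y\in N_\theta$ with $I_\theta(\psi_y)=I_{\theta,B_\delta}(v_\theta)=n_\theta$, and $\gamma$ indeed maps into $I_{N_\theta}^{n_\theta}$ continuously. For $\theta\in(0,\theta_{**})$ we have both $\theta<\theta^*$ (so Lemma \ref{lem-7} ensures $\tau(I_{N_\theta}^{n_\theta})\subset\Om_\delta^+$) and $\theta<\theta_*$ (so Lemma \ref{lem-9} ensures that $H_\theta$ takes values in $\Om_\delta^+$). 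The relation \eqref{27-12-7}, namely $(\tau\circ\gamma)(y)=\alpha_\theta y$, together with $H_\theta(0,u)=\alpha_\theta^{-1}\tau(u)$ and $H_\theta(1,u)=\tau(u)$, then gives a homotopy $t\mapsto H_\theta(t,\gamma(y))$ in $\Om_\delta^+$ between the inclusion $\iota:\Om_\delta^-\hookrightarrow\Om_\delta^+$ and $\tau\circ\gamma$.

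With these pieces in place, I would carry out the category count. Let $k=\text{cat}_{I_{N_\theta}^{n_\theta}}(I_{N_\theta}^{n_\theta})$ and cover $I_{N_\theta}^{n_\theta}=A_1\cup\cdots\cup A_k$ with closed sets $A_j$ contractible in $I_{N_\theta}^{n_\theta}$ via homotopies $h_j:[0,1]\times A_j\to I_{N_\theta}^{n_\theta}$ with $h_j(0,\cdot)=\mathrm{id}$ and $h_j(1,\cdot)\equiv u_j^\star$. The closed preimages $B_j:=\gamma^{-1}(A_j)$ cover $\Om_\delta^-$. For each $j$ I would contract $B_j$ inside $\Om_\delta^+$ by concatenation: on $[0,1/2]$ set $F_j(t,y)=H_\theta(2t,\gamma(y))$, which deforms $y$ into $\tau(\gamma(y))$ while staying in $\Om_\delta^+$ by Lemma \ref{lem-9}; on $[1/2,1]$ set $F_j(t,y)=\tau\bigl(h_j(2t-1,\gamma(y))\bigr)$, which deforms $\tau(\gamma(y))$ to the constant $\tau(u_j^\star)$ and remains in $\Om_\delta^+$ at every intermediate point because $h_j(s,\gamma(y))\in I_{N_\theta}^{n_\theta}$ so Lemma \ref{lem-7} applies. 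Consequently each $B_j$ is contractible in $\Om_\delta^+$, giving $\text{cat}_{\Om_\delta^+}(\Om_\delta^-)\leq k$. Since the inclusions $\Om_\delta^-\hookrightarrow\Om\hookrightarrow\Om_\delta^+$ are homotopy equivalences by the choice of $\delta$, one has $\text{cat}_{\Om_\delta^+}(\Om_\delta^-)=\text{cat}_\Om(\Om)$, and the lemma follows.

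The only real obstacle is the book-keeping needed to confirm that every portion of the two-stage homotopy $F_j$ truly lies inside $\Om_\delta^+$; stage one is covered by Lemma \ref{lem-9}, which was designed precisely so that $H_\theta$ never leaves $\Om_\delta^+$, while stage two is covered by a pointwise application of Lemma \ref{lem-7} along the path $s\mapsto h_j(s,\gamma(y))$ of elements of $I_{N_\theta}^{n_\theta}$. Once these two inclusions are in hand, the rest is a formal exercise in the homotopy-invariance of the Lusternik--Schnirelmann category.
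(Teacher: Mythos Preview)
Your argument is correct and follows the same Lusternik--Schnirelmann scheme as the paper: pull back a contractible covering of $I_{N_\theta}^{n_\theta}$ via $\gamma$ and contract the resulting pieces inside $\Om_\delta^+$ using the maps $\tau$ and $H_\theta$, then invoke $\text{cat}_{\Om_\delta^+}(\Om_\delta^-)=\text{cat}_\Om(\Om)$. The only cosmetic difference is that the paper builds the contraction of $B_j$ in one stroke via the diagonal homotopy $g_j(t,y)=H_\theta\bigl(t,h_j(t,\gamma(y))\bigr)$, which lands in $\Om_\delta^+$ directly by Lemma~\ref{lem-9} since $h_j(t,\gamma(y))\in N_\theta\cap I_\theta^{n_\theta}$ for every $t$, whereas you concatenate $H_\theta(\cdot,\gamma(y))$ with $\tau\circ h_j(\cdot,\gamma(y))$; both constructions are valid and yield the same conclusion.
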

This follows exactly by the same argument as in \cite[Lemma 4.4]{YinYang1}. For the convenience of the reader, we briefly sketch the proof below.	
\begin{proof}
Let, $cat_{I_{N_\theta}^{n_\theta}}({I_{N_\theta}^{n_\theta}})=n.$ By the definition of $cat_{I_{N_\theta}^{n_\theta}}({I_{N_\theta}^{n_\theta}})$,  we can write $I_{N_\theta}^{n_\theta}=A_1 \cup A_2 \cup \dots \cup A_n$ where $\{A_j\}_{j=1}^n$ are closed and contractible in $I_{N_\theta}^{n_\theta},$ that is,
there exists $h_j \in C([0,1] \times A_j;\, I_{N_\theta}^{n_\theta})$ such that 
$$h_j(0,u)=u, \quad h_j(1,u)=u_0  \quad\forall\quad u \in A_j,$$ where $u_0\in A_j$ is fixed. Let $\ga$ be  as defined in \eqref{27-12-4}. Define, 
$B_j:=\ga^{-1}(A_j),1\leq j \leq n.$ Then, $B_j$ is closed for $1 \leq j \leq n$ and 
$\cup_{j=1}^n B_j=\Om^{-}_{\de}$. Set, $g_j:[0,1] \times B_j \to \Om_\de^+$ by
$$g_j(t,y)=H_\theta(t,h_j(t,\ga(y))), \quad\text{for}\quad \theta \in (0,\theta_{**}),$$
where $H_\theta$ is as defined in \eqref{27-12-5}. Therefore,
$$g_j(0,y)=H_\theta(0,h_j(0,\ga(y)))=\frac{\tau(h_j(0,\ga(y)))}{\al_{\theta}} =\frac{(\tau \circ \ga)(y)}{\al_\theta}=\frac{\al_\theta y}{\al_\theta}=y\quad \forall\, y \in B_j,$$
here we have have used \eqref{27-12-7}.  Further,
$$g_j(1,y)=H_\theta(1,h_j(1,\ga(y)))=\tau(h_j(1,\ga(y)))=\tau(u_0) \in \Om_\de^+,$$  which follows from Lemma \ref{lem-7}.
Therefore, the sets $\{B_j\}_{j=1}^n$ are contractible in $\Om_\de^+.$ Hence, 
$$cat_\Om(\Om)=cat_{\Om_\de^+}(\Om_\de^-) \leq n. $$ This proves the lemma. 
\end{proof}

\vspace{2mm}

{\bf Proof of Theorem \ref{thm2}}: Using Lemma \ref{lem-2} and Lemma \ref{lem-6}, we have for all $\theta>0$,$$c_{\theta},\, n_{\theta}<\frac{s_1}{N}(S_{s_1,p})^\frac{N}{s_1p}.$$
By Lemma \ref{lem-12}, $I_{N_\theta}$ satisfies the (PS)$_c$ condition for all $c \in \big(0,\frac{s_1}{N}(S_{s_1,p})^\frac{N}{s_1p}\big).$ Hence, by Lemma \ref{lem-13}, a standard deformation argument implies that, for $\theta \in (0,\theta_*),\,\ I_{N_\theta}^{n_\theta}$ contains at least $cat_\Om(\Om)$ critical points of the restriction of $I_\theta$ on $N_\theta.$ Now, Lemma \ref{lem-11} implies that $I_\theta$ has at least $cat_{\Om}(\Om)$ critical points on $X_{0,s_1,p}(\Om).$ Now, following the same argument as in Theorem \ref{thm1}, it follows $(P)$ has at least $cat_{\Om}(\Om)$ nontrivial  nonnegative solutions. 

\appendix
\numberwithin{equation}{section}

\appendix
\section{}
Here we first recall the classical deformation lemma from  \cite[Lemma 1.3]{Ambro-Rabin}.

\begin{lemma}\label{l-i}
	Let $J \in C^1(X,\R)$ satisfy (PS)-condition. If $c \in \R$ and $N$ is any neighborhood of $K_c=\{u \in X:\, J(u)=c, \, J'(u)=0\}$, then there exists $\eta(t,x)\equiv \eta_t(x) \in C([0,1] \times X, X)$ and constants $0<\eps<\bar{\eps}$ such that
	\begin{itemize}
		\item [(1)]
		$\eta_0(x)=x$ for all $x \in X.$ 
		\item [(2)]
       $\eta_t(x)=x$ for all $x \in J^{-1}[c-\bar{\eps},c+\bar{\eps}].$
       \item [(3)]
       $\eta_t(x)$ is a homeomorphism of $X$ onto $X$ for all $t \in [0,1].$
       \item [(4)]
       $J(\eta_t(x)) \leq J(x)$ for all $x \in X, t \in [0,1].$
       \item [(5)]
       $\eta_t(A_{c+\eps}-N) \subset A_{c-\eps}$ where $A_c=\{x \in X:\, J(x) \leq c \}$ for any $c \in \R.$ 
       \item [(6)]
       If $K_c=\emptyset,\eta_t(A_{c+\eps}) \subset A_{c-\eps}.$
       \item [(7)]
       If $J$ is even, $\eta_t$ is odd in $x.$ 	
	\end{itemize}
\end{lemma}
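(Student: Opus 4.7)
The plan is to build $\eta_t$ as the time-$t$ flow of a suitably truncated pseudo-gradient vector field for $J$. First I would use (PS) in two ways. From (PS), any sequence $\{x_n\}\subset X$ with $J(x_n)\to c$ and $J'(x_n)\to 0$ has a convergent subsequence with limit in $K_c$; in particular $K_c$ is compact. Then by a standard contradiction argument, one can choose $0<\epsilon<\bar\epsilon$ (with $\bar\epsilon$ as small as we like, so in particular $\bar\epsilon<1/8$) and $\delta>0$ such that $\|J'(x)\|\geq\delta$ on the annular region $J^{-1}[c-2\bar\epsilon,c+2\bar\epsilon]\setminus N_{1/2}$, where $N_{1/2}$ is an open neighborhood of $K_c$ compactly contained in $N$: otherwise one would extract a (PS)$_c$-sequence whose limit point would lie in $K_c\setminus N_{1/2}=\emptyset$.

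Second, on the open set $U=\{x\in X:J'(x)\neq 0\}$ I would invoke the standard partition-of-unity construction to obtain a locally Lipschitz pseudo-gradient vector field $W:U\to X$ with $\|W(x)\|\leq 2\|J'(x)\|$ and $\langle J'(x),W(x)\rangle\geq\|J'(x)\|^2$. I would then pick two globally Lipschitz cutoffs $\rho,\sigma:X\to[0,1]$: $\rho\equiv 1$ on $J^{-1}[c-\epsilon,c+\epsilon]$ and $\rho\equiv 0$ off $J^{-1}[c-\bar\epsilon,c+\bar\epsilon]$; $\sigma\equiv 0$ on $N_{1/2}$ and $\sigma\equiv 1$ off $N$. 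Setting
\[
V(x)=\begin{cases} -\rho(x)\sigma(x)\dfrac{W(x)}{\|W(x)\|^2} & \text{if } J'(x)\neq 0,\\[4pt] 0 & \text{if }J'(x)=0,\end{cases}
\]
the vector field $V$ is locally Lipschitz with $\|V(x)\|\leq 1/\delta$ on its support, so the Cauchy problem $\dot\eta(t,x)=V(\eta(t,x))$, $\eta(0,x)=x$ has a unique global solution. Properties (1) and (3) are then standard ODE facts; (2) holds because $V$ vanishes off the annulus; and (4) follows from
\[
\frac{d}{dt}J(\eta_t(x))=\langle J'(\eta_t),V(\eta_t)\rangle=-\rho(\eta_t)\sigma(\eta_t)\frac{\langle J'(\eta_t),W(\eta_t)\rangle}{\|W(\eta_t)\|^2}\leq 0.
\]

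For the descent statements (5) and (6), fix $x\in A_{c+\epsilon}\setminus N$ and follow the trajectory. As long as $\eta_t(x)$ stays inside $J^{-1}[c-\epsilon,c+\epsilon]\setminus N$ one has $\rho\sigma\equiv 1$, and the computation above shows $\frac{d}{dt}J(\eta_t)\leq -1/4$. The trajectory cannot exit through the top level $J=c+\epsilon$ (since $J$ is non-increasing) nor through the boundary of $N$ (since $V$ vanishes on $N_{1/2}$, trajectories stop before entering the interior of $N$), so within time $4\epsilon\leq 1/2<1$ it must exit through $J=c-\epsilon$, proving (5). Property (6) is the special case $K_c=\emptyset$, where $N$ and $\sigma$ can be dropped. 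Finally (7): if $J$ is even then $J'$ is odd, $W$ can be symmetrized to be odd, and $\rho,\sigma$ are automatically even, so $V$ is odd and the flow it generates is odd in $x$. The main obstacle I expect is the bookkeeping in the first step: simultaneously choosing $\epsilon,\bar\epsilon,\delta$ and the nested neighborhoods $N_{1/2}\Subset N$ so that descent at rate $1/4$ is guaranteed on the right region without the flow ever being obstructed by a critical point; once this is settled the pseudo-gradient machinery and ODE theory handle the rest mechanically.
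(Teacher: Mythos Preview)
The paper does not prove this lemma; it merely recalls it from Ambrosetti--Rabinowitz \cite[Lemma~1.3]{Ambro-Rabin}. Your pseudo-gradient flow construction is exactly the standard route used there, so in that sense your approach coincides with the cited source.

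There is, however, one genuine slip in your verification of (5). You claim the trajectory ``cannot exit through the boundary of $N$ (since $V$ vanishes on $N_{1/2}$, trajectories stop before entering the interior of $N$)''. This is not correct: $V$ vanishes on $N_{1/2}$, which sits \emph{strictly inside} $N$, so nothing prevents the trajectory from crossing $\partial N$ into the annulus $N\setminus N_{1/2}$, where $0<\sigma<1$ and the descent rate $\tfrac{d}{dt}J(\eta_t)\le -\tfrac14\sigma$ may be arbitrarily small. The trajectory could then linger there without $J$ dropping below $c-\eps$ by time $1$. The standard repair is precisely the ``bookkeeping'' you anticipate: since $\|V\|\le 1/\delta$, the flow needs time at least $d\delta$ (with $d:=\mathrm{dist}(N_{1/2},\partial N)>0$) to travel from $\partial N$ toward the region where $\sigma$ degenerates; during that transit one still has $\sigma=1$ (or bounded below), so $J$ drops by at least a fixed amount depending only on $d,\delta$. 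Choosing $\eps$ small relative to this quantity (e.g.\ $\eps<\tfrac18\min\{1,\,d\delta\}$, or introducing a third nested neighborhood) closes the argument. Also note the minor arithmetic: the descent by $2\eps$ at rate $\ge\tfrac14$ takes time $\le 8\eps$, not $4\eps$, though your constraint $\bar\eps<1/8$ still makes this less than $1$.
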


Note that the above lemma is also true if $J$ satisfies (PS)$_c$ condition for $c<c_0$ for some $c_0 \in \R.$ Next, recall the general version of Mountain Pass Lemma (see \cite{AI}).
\begin{lemma}\label{lem-10}
	Let $X$ be a Banach space. Let $I \in C^1(X,\R).$ Let us assume for some $\ba,\rho>0,$ we have,
	\item [(i)]
	$I(u)>\ba$ for all $u \in X$ with $\|u\|_{X}=\rho.$
	\item[(ii)] 
	$I(0)=0$ and $I(v_0)<\ba$ for some $v_0 \in X$ with $\|v\|_{X}>\rho.$ \\
	Then there exists a sequence $\{u_n\} \subset X$ such that $I(u_n) \to \al$ and $I'(u_n) \to 0$ in $X'$ as $n \to \infty,$ where $\al$ is given by:
	$$\al:=\inf_{u \in X\setminus \{0\}} \max_{t \geq 0}I(tu).$$ 
\end{lemma}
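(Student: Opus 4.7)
The plan is to prove the lemma via a classical minimax/deformation argument, in the spirit of Ambrosetti--Rabinowitz. The approach is by contradiction: if the asserted Palais--Smale sequence at level $\al$ did not exist, then a quantitative deformation lemma would yield a homeomorphism of $X$ uniformly lowering $I$ on a neighborhood of the level $\{I = \al\}$; applying it to a near-optimal competing ray would then produce a new competitor whose maximum of $I$ lies strictly below $\al$, contradicting the definition of $\al$ as an infimum.

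I would first check that $\al \in [\ba, +\infty)$ under hypotheses (i) and (ii). For the lower bound, given any $u \in X \setminus \{0\}$, the ray $t \mapsto tu$ meets the sphere $\{\|v\|_X = \rho\}$ at $t_u := \rho / \|u\|_X$; by (i), $I(t_u u) > \ba$, so $\max_{t \geq 0} I(tu) > \ba$, and passing to the infimum yields $\al \geq \ba > 0$. For finiteness of $\al$, I would use the ray through $v_0$: in the intended applications (such as the functionals $I_\theta$ of this paper) one has $I(tv_0) \to -\infty$ as $t \to +\infty$, whence $\max_{t \geq 0} I(tv_0) < +\infty$ and so $\al < +\infty$.

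Now assume, toward a contradiction, that no sequence $\{u_n\} \subset X$ with $I(u_n) \to \al$ and $I'(u_n) \to 0$ in $X'$ exists. Then there are constants $\eps_0, \delta > 0$ with $\eps_0 < \tfrac{1}{2}\min\{\al,\, \al - \ba\}$ such that $\|I'(u)\|_{X'} \geq \delta$ whenever $|I(u) - \al| \leq 2\eps_0$. A quantitative version of the deformation Lemma~\ref{l-i} then produces $\eta \in C([0,1] \times X, X)$ satisfying $\eta(0, \cdot) = \mathrm{id}$, each $\eta(s, \cdot)$ a homeomorphism, $\eta(s, u) = u$ whenever $|I(u) - \al| \geq 2\eps_0$, $s \mapsto I(\eta(s,u))$ non-increasing, and $I(\eta(1, u)) \leq I(u) - \eps_0$ for all $u$ with $I(u) \leq \al + \eps_0$. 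Choose $u^* \in X \setminus \{0\}$ with $\max_{t \geq 0} I(t u^*) < \al + \eps_0$ and $T > 0$ so large that $I(Tu^*) < \al - 2\eps_0$. The deformed curve $s \mapsto \eta(1, sT u^*)$, $s \in [0,1]$, then has its endpoints fixed (since $I(0) = 0 < \al - 2\eps_0$ and $I(Tu^*) < \al - 2\eps_0$), and along it $I$ is bounded by $\al - \eps_0$, yielding a competitor whose maximum lies strictly below $\al$, a contradiction.

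The main obstacle is that after deformation the curve $\{\eta(1, tu^*) : t \in [0,T]\}$ is no longer a straight ray, so it is not directly admissible in the ray-based infimum defining $\al$. The standard way around this is to identify $\al$ with the path-based minimax value
$$c := \inf_{\gamma \in \Gamma} \max_{s \in [0,1]} I(\gamma(s)), \qquad \Gamma := \{\gamma \in C([0,1]; X) : \gamma(0) = 0,\; \gamma(1) = v_0\},$$
by squeezing both quantities between $\ba$ (using (i) and the fact that every path must cross the sphere $\{\|v\|_X = \rho\}$) and the ray value through $v_0$ (using suitable reparametrizations of rays into admissible paths). The classical mountain pass theorem, whose proof is exactly the deformation argument above carried out for paths rather than rays, then produces a Palais--Smale sequence at level $c$, and under the present hypotheses $c = \al$, completing the proof.
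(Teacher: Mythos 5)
The paper does not actually prove Lemma~\ref{lem-10}; it merely cites it as ``the general version of Mountain Pass Lemma'' from Aubin--Ekeland \cite{AI}, so there is no in-paper argument to compare yours against. Judged on its own terms, though, your deformation-based proposal has a genuine gap at exactly the point you yourself flag. After deforming the curve $s\mapsto \eta(1,sTu^*)$ you obtain a competitor that is no longer a ray, and you propose to fix this by replacing the ray minimax
\[
\al=\inf_{u\neq 0}\max_{t\geq 0}I(tu)
\]
with the path minimax
\[
c=\inf_{\gamma\in\Gamma}\max_{s\in[0,1]}I(\gamma(s)),
\]
and then asserting $c=\al$. But that identification is not a consequence of hypotheses (i) and (ii). Under (i) and (ii) alone one gets $c\geq\ba$ and $\al\geq\ba$, yet neither inequality $c\leq\al$ nor $c\geq\al$ follows: given $u\neq 0$ the ray $\{tu:t\geq 0\}$ need not come anywhere near $v_0$, so you cannot convert it into an admissible path without traversing an uncontrolled connecting arc, and conversely a near-optimal path from $0$ to $v_0$ need not dominate any particular ray. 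In the paper's concrete functionals $I_\theta$ the identity $c_\theta=C_\theta$ is established separately (Lemma~\ref{lem-3}) and rests on the Nehari structure --- strict fibering, $t\mapsto I_\theta(tu)$ having a unique interior maximum, superlinear decay $I_\theta(tu)\to-\infty$ --- none of which is encoded in (i)--(ii). So the ``standard way around this'' you invoke is in fact an additional structural hypothesis, not a corollary of (i), (ii).

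Two smaller but related issues: your finiteness argument for $\al$ explicitly appeals to ``the intended applications'' rather than to (i)--(ii), so as written the proof does not show $\al<\infty$; and the choice $\eps_0<\frac12\min\{\al,\al-\ba\}$ presupposes $\al>\ba$, which again is not guaranteed by (i) ($\al\geq\ba$ is all one gets from the rays). If you want a self-contained proof of the lemma in the ray formulation stated here, the cleaner route is the one Aubin--Ekeland actually take: apply Ekeland's variational principle directly to the minimax function $u\mapsto\max_{t\geq 0}I(tu)$ (or to the associated fibering map), which produces a Palais--Smale sequence at level $\al$ without ever passing through the path minimax $c$ or a deformation lemma. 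Alternatively, strengthen the hypotheses of the lemma (add superlinearity and uniqueness of the ray maximum) so that $\al=c$ can be proven, and then the deformation argument you sketch goes through.
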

The next lemma is regarding the elementary properties of Krasnoselskii  genus. 
\begin{lemma}\label{l-ii}
	Let $A,B \in \Sigma.$ Then,
	\begin{itemize}
		\item [(1)]
		if there exists $f \in C(A,B),$ odd, then $\ga(A) \leq \ga(B).$
		\item [(2)]
        if $A \subset B,$ then $\ga(A) \leq \ga(B).$
        \item [(3)]
        if there exists an odd homeomorphism between A and B, then $\ga(A)=\ga(B).$
        \item [(4)]
        if $S^{N-1}$ denotes the unit sphere in $\Rn,$ then $\ga (S^{N-1})=N.$
        \item [(5)]
        $\ga(A \cup B)\leq \ga(A)+\ga(B),$
        \item [(6)]
        If $\ga(A)<\infty,$ then $\ga(\overline{A \cup B}) \geq \ga(A)-\ga(B).$
	    \item [(7)]
	    If $A$ is compact, then $\ga(A)<\infty$ and there exists $\de>0$ such that $\ga(A)=\ga(N_\de(A))$ where $N_\de(A)=\{x \in X:d(x,A) \leq \de \}.$
	    \item [(8)]
	    If $X_0$ is a subspace of $X$ with codimension $k$ and $\ga(A)>k,$ then $A \cap X_0 \neq \emptyset.$ 
	\end{itemize}
\end{lemma}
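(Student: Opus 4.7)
The plan is to verify the eight properties in order, exploiting only the definition of genus together with Tietze extension and the Borsuk--Ulam theorem. Properties (1)--(3) are essentially immediate. For (1), if $\ga(B)=n$ is witnessed by an odd continuous $g\colon B\to\R^n\setminus\{0\}$, then $g\circ f\colon A\to\R^n\setminus\{0\}$ is odd and continuous, so $\ga(A)\le n$. Property (2) is (1) applied to the inclusion $A\hookrightarrow B$, and (3) follows from (1) applied in both directions to an odd homeomorphism and its inverse.

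For (5), let $f\colon A\to\R^n\setminus\{0\}$ and $g\colon B\to\R^m\setminus\{0\}$ be odd continuous witnesses. I will extend $f$ and $g$ componentwise to continuous maps on all of $X$ via the Tietze extension theorem and then symmetrize each extension by replacing $\widetilde f(x)$ with $\tfrac12(\widetilde f(x)-\widetilde f(-x))$ to restore oddness without destroying the values on $A$. The pair $(\widetilde f,\widetilde g)\colon A\cup B\to\R^{n+m}$ is then odd and continuous, and nowhere zero on $A\cup B$ since at each point at least one of the original maps is nonzero. Hence $\ga(A\cup B)\le n+m$. For (6), the natural reading (and the one actually used in the main text) is $\ga(\overline{A\setminus B})\ge\ga(A)-\ga(B)$; it follows from $A\subset \overline{A\setminus B}\cup B$ together with (2) and (5).

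For (7), given a compact symmetric $A\subset X\setminus\{0\}$, I cover $A$ with finitely many symmetric pairs of open balls $B(\pm x_i,r_i)$ with $r_i<|x_i|/2$, say $2k$ of them. Using a partition of unity $\{\psi_i\}$ subordinate to this cover, symmetrized to satisfy $\psi_i(-x)=\psi_{-i}(x)$, I build the odd continuous map $x\mapsto(\psi_1(x)-\psi_{-1}(x),\dots,\psi_k(x)-\psi_{-k}(x))\in\R^k\setminus\{0\}$, showing $\ga(A)<\infty$; choosing $\de$ smaller than the Lebesgue number of the cover extends the same map to $N_\de(A)$ with the same target dimension, giving $\ga(N_\de(A))\le\ga(A)$, while (2) gives the reverse inequality. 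For (8), if $A\cap X_0=\emptyset$ and $X_0$ has codimension $k$, pick a continuous linear projection $P\colon X\to X/X_0\cong\R^k$; then $P|_A$ is odd, continuous, and avoids $0$, so $\ga(A)\le k$, contradicting $\ga(A)>k$.

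The main obstacle is (4): $\ga(S^{N-1})=N$. The upper bound $\ga(S^{N-1})\le N$ is trivial from the inclusion $S^{N-1}\hookrightarrow\R^N\setminus\{0\}$. The nontrivial lower bound is where genuine algebraic topology enters: if $\ga(S^{N-1})=k<N$ were witnessed by an odd continuous $h\colon S^{N-1}\to\R^k\setminus\{0\}$, then $h/|h|$ would be an odd continuous map $S^{N-1}\to S^{k-1}\subset S^{N-2}$, directly contradicting the Borsuk--Ulam theorem. I will invoke Borsuk--Ulam as a black box and refer the reader to standard references (e.g.\ Ambrosetti--Rabinowitz or Struwe) for its proof; this is the only step in the lemma that does not reduce to a routine manipulation of odd maps.
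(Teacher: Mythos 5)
The paper does not actually prove this lemma; it simply cites \cite[Lemma 1.2]{Ambro-Rabin}. Your proposal reconstructs the standard textbook arguments, and most of them (items (1)--(6), (8), and the Borsuk--Ulam reduction for (4)) are correct and essentially the ones found in that reference. You also correctly read ``$\overline{A\cup B}$'' in (6) as the typo it is for ``$\overline{A\setminus B}$,'' which is the form the paper actually uses in the proof of Theorem \ref{Thm}.

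There is, however, a genuine gap in your argument for the second half of (7). Your partition-of-unity construction over the $2k$ symmetric balls yields an odd continuous map into $\R^{k}\setminus\{0\}$, hence the bounds $\ga(A)\le k$ and, after shrinking $\de$, $\ga(N_\de(A))\le k$. But $k$ is just the number of pairs in one particular cover; it need not equal $\ga(A)$, so ``$\ga(N_\de(A))\le k$'' does not give ``$\ga(N_\de(A))\le\ga(A)$.'' The standard fix is to stop using the cover map once finiteness is established: set $n:=\ga(A)$, take a \emph{minimal} odd continuous $h\colon A\to\R^n\setminus\{0\}$, extend it via Tietze (componentwise) and symmetrize as in your proof of (5) to get an odd continuous $\hat h\colon X\to\R^n$ with $\hat h|_A=h$. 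Then $\{\hat h\neq 0\}$ is an open set containing the compact set $A$, so there is $\de>0$ (also $\de<d(0,A)$) with $N_\de(A)\subset\{\hat h\neq 0\}$, and $\hat h|_{N_\de(A)}$ witnesses $\ga(N_\de(A))\le n=\ga(A)$; the reverse inequality is (2). With that correction the proposal is complete.
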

\begin{proof} See \cite[Lemma 1.2]{Ambro-Rabin} . \end{proof}

\begin{remark}It's easy to observe that if $A$ contains finitely many antipodal points $u_i$, $-u_i$ $u_i\not=0$, then $\ga(A)=1$.
\end{remark}

\vspace{2mm}

{\bf Acknowledgement}  The authors would like to thank Prof. Marco Squassina for pointing out a mistake in the preliminary version of the draft and also for bringing into the notice of authors the paper \cite{MirS} and \cite{MS}. The authors also would like to thank Prof. Giuseppe Mingione for pointing out the connections with the papers \cite{GM-2, GM-1, GM-3}. 
The authors are  thankful to the referee for his/her valuable comments which helped to improve the article in a great extent.The first author is supported by the INSPIRE research grant DST/INSPIRE 04/2013/000152 and the second author is supported by the NBHM grant 2/39(12)/2014/RD-II.

\vspace{2mm}

 \end{document}